\documentclass[12pt,letter]{amsart}

\usepackage{amssymb,latexsym, amsmath, amsxtra, bbm}
\usepackage[dvips]{graphics}
\usepackage{xypic}
\usepackage{verbatim}
\usepackage[abs]{overpic}
\usepackage{hyperref}

\allowdisplaybreaks[3]

\theoremstyle{plain}
        \newtheorem{theorem}{Theorem}[section]
        \newtheorem*{theorem*}{Theorem}
        \newtheorem*{conj*}{Conjecture}
        \newtheorem{lemma}[theorem]{Lemma}
        \newtheorem{prop}[theorem]{Proposition}
        \newtheorem{cor}[theorem]{Corollary}

\theoremstyle{definition}
        \newtheorem{definition}[theorem]{Definition}
        \newtheorem{rem}[theorem]{Remark}

         \newtheorem*{assumptions}{The Assumptions}

\theoremstyle{remark}
        \newtheorem*{remark}{Remark}

        \newtheorem{question}{Question}

\numberwithin{equation}{section}
\numberwithin{theorem}{section}
\numberwithin{table}{section}
\numberwithin{figure}{section}

\providecommand{\defn}[1]{\emph{#1}}



\newcommand{\diam}  {\operatorname{diam}}

\newcommand{\inte}  {\operatorname{inte}}
\newcommand{\inter}  {\operatorname{int}}

\newcommand{\card} {\operatorname{card}}
\newcommand{\supp}{\operatorname{supp}}
%


\newcommand{\R}{\mathbb{R}}      

\newcommand{\C}{\mathbb{C}}      

\newcommand{\N}{\mathbb{N}}      
\newcommand{\Z}{\mathbb{Z}}      
\newcommand{\T}{\mathbb{T}}      

%
%

\providecommand{\Abs}[1]{\left|#1\right|}

\providecommand{\Norm}[1]{\left\|#1\right\|}
\renewcommand{\:}{\colon}

\def\({\left(}
\def\){\right)}
\def\[{\left[}
\def\]{\right]}



\newcommand{\crit}{\operatorname{crit}}
\newcommand{\post}{\operatorname{post}}


\newcommand{\CC}{\mathcal{C}}


 \newcommand{\DD}{\mathbf{D}}

\newcommand{\PP}{\mathbf{P}}

\newcommand{\X} {\mathbf{X}}

\newcommand{\E} {\mathbf{E}}

\newcommand{\V} {\mathbf{V}}

\newcommand{\W} {\mathbf{W}}

\newcommand{\G}{\mathcal{G}}

\renewcommand{\T}{\mathcal{T}}

\newcommand{\VV}{\mathcal{V}}

\newcommand{\EE}{\mathcal{E}}

\newcommand{\FF}{\mathfrak{F}}

\newcommand{\GG}{\mathfrak{G}}

\newcommand{\UU}{\mathfrak{U}}

\newcommand{\GGG}{G}



\newcommand{\CCC}{C}


\newcommand{\PPP}{\mathcal{P}}


\newcommand{\MMM}{\mathcal{M}}


\newcommand{\Holder}[1] {\CCC^{0,#1}}



\newcommand{\RR}{\mathcal{L}}

\begin{document}
\title[Weak Expansion Properties and Large Deviation Principles]{Weak Expansion Properties and Large Deviation Principles for Expanding Thurston Maps}
\author{Zhiqiang~Li}
\address{Department of Mathematics, UCLA, Los Angeles CA 90095-1555}
\thanks{The author was partially supported by NSF grants DMS-1162471 and DMS-1344959.}
\email{lizq@math.ucla.edu}


\subjclass[2010]{Primary: 37D25; Secondary: 37D20, 37D35, 37D40, 37D50, 37B99, 37F15, 57M12}

\keywords{Thurston map, postcritically-finite map, entropy-expansive, $h$-expansive, asymptotically $h$-expansive, thermodynamical formalism, equilibrium state, equidistribution, large deviation principle.}

\begin{abstract}
In this paper, we prove that an expanding Thurston map $f\: S^2 \rightarrow S^2$ is asymptotically $h$-expansive if and only if it has no periodic critical points, and that no expanding Thurston map is $h$-expansive. As a consequence, for each expanding Thurston map without periodic critical points and each real-valued continuous potential on $S^2$, there exists at least one equilibrium state. For such maps, we also establish  large deviation principles for iterated preimages and periodic points. It follows that iterated preimages and periodic points are equidistributed with respect to the unique equilibrium state for an expanding Thurston map without periodic critical points and a potential that is H\"older continuous with respect to a visual metric on $S^2$. 
\end{abstract}

\maketitle

\tableofcontents

\section{Introduction}

The theory of discrete-time dynamical systems studies qualitative and quantitative properties of orbits of points in a space under iterations of a given map. Various conditions can be imposed upon the map to simplify the orbit structures, which in turn lead to results about the dynamical system under consideration. One such well-known condition is expansiveness. Roughly speaking, a map is expansive if no two distinct orbits stay close forever. Expansiveness plays an important role in the exploitation of hyperbolicity in smooth dynamical systems, and in complex dynamics in particular (see for example, \cite{Ma87} and \cite{PU10}).

In the context of continuous maps on compact metric spaces, there are two weaker notions of expansion, called \emph{$h$-expansiveness} and \emph{asymptotic $h$-expansiveness}, introduced by R.~Bowen \cite{Bow72} and M.~Misiurewicz \cite{Mi73}, respectively. Forward-expansiveness implies $h$-expan\-siveness, which in turn implies asymptotic $h$-expansiveness \cite{Mi76}. Both of these weak notions of expansion play important roles in the study of smooth dynamical systems (see \cite{Bu11, DFPV12, DM09, DN05, LVY13}). Moreover, any smooth map on a compact Riemannian manifold is as\-ymp\-totically $h$-expansive \cite{Bu97}. Recently, N.-P.~Chung and G.~Zhang extended these concepts to the context of a continuous action of a countable discrete sofic group on a compact metric space \cite{CZ14}.

The dynamical systems that we study in this paper are induced by \emph{expanding Thurston maps}, which are a priori not differentiable. Thurston maps are branched covering maps on the sphere $S^2$ that generalize rational maps with finitely many postcritical points on the Riemann sphere. More precisely, a (non-homeomorphic) branched covering map $f\: S^2\rightarrow S^2$ is a Thurston map if it has finitely many critical points each of which is preperiodic. These maps arose in W.~P.~Thurston's characterization of postcritically-finite rational maps (see \cite{DH93}). See Section~\ref{sctThurstonMap} for a more detailed introduction to Thurston maps.

Inspired by the analogy to Cannon's conjecture in geometric group theory (see for example, \cite[Section~5 and Section~6]{Bon06}), M.~Bonk and D.~Meyer investigated extensively properties of expanding Thurston maps \cite{BM10}. (For a precise formulation of these analogies via the so-called Sullivan's dictionary, see \cite[Section~1]{HP09}.) Such maps share many common features of rational maps. For example, for each such map $f$, there are exactly $1+\deg f$ fixed points, counted with a natural weight induced by the local degree at each point, where $\deg f$ denotes the topological degree of the map $f$ \cite{Li13}; there exists a unique measure of maximal entropy (see for example, \cite{BM10}), with respect to which iterated preimages and periodic points  are equidistributed in some appropriate sense (see for example, \cite{Li13}). More generally, for each \emph{potential} $\phi \: S^2 \rightarrow \R$ that is H\"older continuous with respect to some natural metric induced by $f$, there exists a unique equilibrium state, with respect to which iterated preimages are equidistributed \cite{Li14}. 

P.~Ha\"issinsky and K.~Pilgrim investigated branched covering maps in a more general context \cite{HP09}. We will focus on expanding Thurston maps in this paper.

Let $(X,d)$ be a compact metric space, and $g\: X\rightarrow X$ a continuous map on $X$. Denote, for $\epsilon>0$ and $x\in X$,
$$
\Phi_\epsilon (x) = \{ y\in X \,|\, d(g^n(x),g^n(y)) \leq \epsilon \mbox{ for all } n \geq 0 \}.
$$
The map $g$ is called \defn{forward expansive} if there exists $\epsilon >0$ such that $\Phi_\epsilon(x)=\{x\}$ for all $x\in X$. By R.~Bowen's definition in \cite{Bow72}, the map $g$ is \defn{$h$-expansive} if there exists $\epsilon>0$ such that the topological entropy $h_{\operatorname{top}}(g|_{\Phi_\epsilon(x)})=h_{\operatorname{top}}(g,\Phi_\epsilon(x))$ of $g$ restricted to $\Phi_\epsilon(x)$ is $0$ for all $x\in X$. One can also formulate asymptotic $h$-expansiveness in a similar spirit, see for example, \cite[Section~2]{Mi76}. However, in this paper, we will adopt equivalent formulations from \cite{Do11}. See Section~\ref{subsctAsymHExpConcepts} for details.

Another way to formulate forward expansiveness is via distance expansion. We say that $g\: X\rightarrow X$ is \defn{distance-expanding} (with respect to the metric $d$) if there exist constants $\lambda>1$, $\eta>0$, and $n\in\N$ such that for all $x,y\in X$ with $d(x,y)\leq \eta$, we have $d(g^n(x),g^n(y))\geq \lambda d(x,y)$. If $g$ is forward expansive, then there exists a metric $\rho$ on $X$ such that the metrics $d$ and $\rho$ induce the same topology on $X$ and $g$ is distance-expanding with respect to $\rho$ (see for example, \cite[Theorem~4.6.1]{PU10}). Conversely, if $g$ is distance-expanding, then it is forward expansive (see for example, \cite[Theorem~4.1.1]{PU10}). So roughly speaking, if $g$ is forward expansive, then the distance between two points that are close enough grows exponentially under forward iterations of $g$.

Since a Thurston map, by definition, has to be a branched covering map, we can always find two distinct points that are arbitrarily close to a critical point (thus arbitrarily close to each other) and that are mapped to the same point. Thus a Thurston map cannot be forward expansive. In order to impose some expansion condition, it is then natural to consider backward orbits. We say that a Thurston map is expanding if for any two points $x,y\in S^2$, their preimages under iterations of the map gets closer and closer. See Definition~\ref{defExpanding} for a precise formulation.

The expansion property of expanding Thurston maps seems to be rather strong. However, as a part of our first main theorem below, we will show that no expanding Thurston map is $h$-expansive.

\begin{theorem}  \label{thmWeakExpansion}
Let $f\: S^2 \rightarrow S^2$ be an expanding Thurston map. Then $f$ is asymptotically $h$-expansive if and only if $f$ has no periodic critical points. Moreover, $f$ is not $h$-expansive.
\end{theorem}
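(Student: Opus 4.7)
My plan is to reformulate all three claims in terms of the tail entropy $h^*(f)$ and its $\epsilon$-truncations $h^*(f,\epsilon)$, using the equivalences to be recalled in Section~\ref{subsctAsymHExpConcepts}: $f$ is asymptotically $h$-expansive iff $h^*(f)=0$, and $h$-expansive iff $h^*(f,\epsilon_0)=0$ for some $\epsilon_0>0$. Tail entropy will be analyzed through the Bonk--Meyer tile decomposition $\{\DD^n\}$ of $S^2$: $f$ maps $n$-tiles homeomorphically to $(n{-}1)$-tiles, tile diameters decay uniformly like $\Lambda^{-n}$ in a visual metric, and a Bowen $(n,\epsilon)$-ball $B(x,\epsilon,n)$ is comparable (up to bounded overhead) to a union of tiles of level $n+k_0$ with $k_0\approx\log_\Lambda(1/\epsilon)$. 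This will let me convert both statements into tile-counting inside Bowen balls.

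For the direction \emph{periodic critical point implies not asymptotically $h$-expansive}, let $c$ be a periodic critical point of period $p$ with local degree $d:=\deg_{f^p}(c)\ge 2$. Iterating $f^p$ at $c$, exactly $d^m$ tiles of level $mp$ meet at $c$ and all sit inside an arbitrarily small fixed neighborhood of the orbit of $c$. Tracking these tiles through Bowen balls along the orbit of $c$ yields $h^*(f)\ge (\log d)/p>0$, so $f$ is not asymptotically $h$-expansive. Since $h^*(f)\le h^*(f,\epsilon)$ for every $\epsilon>0$, the same estimate also gives $h^*(f,\epsilon)>0$ for every $\epsilon$, so $f$ is not $h$-expansive in this case.

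For the converse direction \emph{no periodic critical points implies asymptotically $h$-expansive}, the hypothesis together with finiteness of $\crit f$ forces every forward orbit to meet $\crit f$ only a uniformly bounded number of times, yielding $\deg_{f^n}(x)\le D<\infty$ for all $x$ and $n$. Via the tile structure this bounds the number of tiles of level $n+k_0$ meeting $B(x,\epsilon,n)$ by a constant $C(\epsilon)$ depending only on $\epsilon$, which gives $h^*(f,\epsilon)\to 0$ as $\epsilon\to 0$, hence $h^*(f)=0$.

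It remains only to show $h^*(f,\epsilon)>0$ for every $\epsilon>0$ when $f$ has no periodic critical points. Fix $\epsilon>0$ and let $U$ be the open $\epsilon/3$-neighborhood of $\crit f$. The measure of maximal entropy of $f$ exists and has full support (see \cite{BM10}), so ergodicity provides a point $x\in S^2$ whose forward orbit visits $U$ with positive frequency $\rho>0$. At each visit time $k$ with $f^k(x)\in U$, the $\epsilon$-ball around $f^k(x)$ contains a critical point of $f$ and hence meets at least two of the local inverse-branches of $f$ near that critical point; distinct choices of branches for $f^k(y)$ along the $M(n)$ visit times in $\{0,\dots,n-1\}$ produce at least $2^{M(n)}$ distinct $(n,\delta)$-separated points of $\Phi_\epsilon(x)$ for some $\delta>0$ depending only on $\epsilon$ and the critical geometry. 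Since $M(n)/n\to\rho>0$, this gives $h_{\operatorname{top}}(f,\Phi_\epsilon(x))\ge\rho\log 2>0$. I expect the main obstacle in this final step to be the rigorous decoupling of the branch choices into distinct and Bowen-separated shadow orbits, which will require careful tracking of the local inverse branches of $f$ relative to the tile structure in the visual metric rather than a direct dynamical argument.
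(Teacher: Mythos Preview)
Your argument for the direction ``no periodic critical points implies asymptotically $h$-expansive'' contains a genuine gap that, if it worked as stated, would prove too much. You claim that the uniform bound $\deg_{f^n}(x)\le D$ forces the number of $(n+k_0)$-tiles meeting the Bowen ball $B(x,\epsilon,n)$ to be bounded by a constant $C(\epsilon)$ independent of $n$. But if that held, you would get $h^*(f,\epsilon)=0$ for every $\epsilon>0$, i.e.\ $f$ would be $h$-expansive --- contradicting the very statement you set out to prove in the last part. The error is a conflation of ``the orbit hits $\crit f$ boundedly often'' (which is what the uniform local-degree bound encodes) with ``the orbit comes $\epsilon$-close to $\crit f$ boundedly often'' (which is what controls branching of inverse branches inside a Bowen ball, and which is false: typical orbits visit any fixed neighborhood of $\crit f$ with positive frequency). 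The tile count in a Bowen ball can and does grow with $n$; what one must show is that the exponential \emph{rate} of this growth tends to $0$ as $\epsilon\to 0$. The paper does this by proving (Lemma~\ref{lmLocInjectAwayFromCrit}) that $f$ is injective on $\delta$-balls outside a $\tau(\delta)$-neighborhood of $\crit f$ with $\tau(\delta)\to 0$, and then (proof of Theorem~\ref{thmAsympHExpWOPC}) that once an orbit enters the $\tau$-neighborhood it must exit and stay outside for at least $M_m\to\infty$ consecutive steps, yielding a branching count $(\deg f)^{O(n/M_m)}$ via a tree lemma (Lemma~\ref{lmTree}).

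Your treatments of the other two parts are closer to the paper but also incomplete. For the periodic-critical-point direction, the paper does not work directly at $c$ but builds an explicit periodic backward pseudo-orbit through $c$ and tracks a separated set $V_n$ through the flower covers (Theorem~\ref{thmNotAsympHExp}); your tile-counting at $c$ is the right heuristic, but turning ``$d^m$ tiles of level $mp$ meet at $c$'' into a rigorous lower bound on $H(\xi_l^n\mid\eta_m^n)$ valid for all large $l$ requires exactly this kind of bookkeeping. For non-$h$-expansiveness in the aperiodic-critical case, your ergodic approach via the measure of maximal entropy is genuinely different from the paper's explicit construction (Theorem~\ref{thmNotHExp}), which locates a critical point $p$ with $f^2(p)=f(p)\ne p$ and again builds a periodic pseudo-orbit by hand. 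The obstacle you correctly flag --- ensuring that independent branch choices at the visit times produce shadow orbits that remain in $\Phi_\epsilon(x)$ for \emph{all} future times, not just up to time $n$ --- is real and is precisely what the paper's explicit construction is designed to circumvent.
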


When R.~Bowen introduced $h$-expansiveness in \cite{Bow72}, he mentioned that no diffeomorphism of a compact manifold was known to be not $h$-expansive. M.~Misiurewicz then produced an example of a diffeomorphism that is not asymptotically $h$-expansive \cite{Mi73}. M.~Lyubich showed that any rational map is asymptotically $h$-expansive \cite{Ly83}. J.~Buzzi established asymptotic $h$-expan\-sive\-ness of any $\CCC^\infty$-map on a compact Riemannian manifold \cite{Bu97}. Examples of $\CCC^\infty$-maps that are not $h$-expansive were given by M.~J.~Pacifico and J.~L.~Vieitez \cite{PV08}. Our Theorem~\ref{thmWeakExpansion} implies that any rational expanding Thurston map (i.e., any postcritically-finite rational map whose Julia set is the whole sphere (see \cite[Proposition~19.1]{BM10})) is not $h$-expansive. 

Expanding Thurston maps may be the first example of a class of a priori non-differentiable maps that are not $h$-expansive but may be asymptotically $h$-expansive depending on the property of orbits of critical points.

As an immediate consequence of Theorem~\ref{thmWeakExpansion} and the result of J.~Buzzi \cite{Bu97} mentioned above, we get the following corollary, which partially answers a question of K.~Pilgrim (see Problem~2 in \cite[Section~21]{BM10}).

\begin{cor}    \label{corNotConjDiff}
An expanding Thurston map with at least one periodic critical point cannot be conjugate to a $\CCC^\infty$-map on the Euclidean $2$-sphere.
\end{cor}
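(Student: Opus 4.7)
The plan is an immediate two-line argument by contradiction, combining Theorem~\ref{thmWeakExpansion} with Buzzi's theorem \cite{Bu97} stated in the introduction. Suppose, for the sake of contradiction, that $f\:S^2\to S^2$ is an expanding Thurston map with at least one periodic critical point and that there exists a homeomorphism $\psi\:S^2\to S^2$ such that $g:=\psi\circ f\circ\psi^{-1}$ is a $\CCC^\infty$-map on the Euclidean $2$-sphere (equipped with any smooth Riemannian metric, e.g.\ the round metric). By Buzzi's theorem, $g$ is asymptotically $h$-expansive.

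Next I would invoke the fact that asymptotic $h$-expansiveness is invariant under topological conjugacy. This is a standard observation: although the notion is usually defined using a metric, it can be formulated purely via the topological tail entropy $h^*_{\operatorname{top}}$, which depends only on the topology of the underlying space and the continuous map; equivalently, for any two metrics inducing the given topology on a compact space, a map is asymptotically $h$-expansive with respect to one iff it is with respect to the other. Hence the conjugacy $\psi$ transports asymptotic $h$-expansiveness of $g$ to $f$.

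But Theorem~\ref{thmWeakExpansion} asserts that an expanding Thurston map is asymptotically $h$-expansive if and only if it has no periodic critical points. Since $f$ has a periodic critical point by hypothesis, $f$ cannot be asymptotically $h$-expansive, a contradiction.

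The only non-routine ingredient is the conjugacy invariance of asymptotic $h$-expansiveness, which I would either cite from the standard literature on topological tail entropy (e.g., \cite{Mi76,Do11}) or verify directly in a single sentence using the tail-entropy formulation adopted in Section~\ref{subsctAsymHExpConcepts}. Everything else is a direct chaining of Theorem~\ref{thmWeakExpansion} and \cite{Bu97}, so I do not expect any genuine obstacle.
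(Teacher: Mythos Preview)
Your proposal is correct and matches the paper's own argument, which simply records the corollary as an immediate consequence of Theorem~\ref{thmWeakExpansion} and Buzzi's result \cite{Bu97} without giving a separate proof. Your only addition---making explicit the conjugacy invariance of asymptotic $h$-expansiveness via the topological tail entropy formulation of Section~\ref{subsctAsymHExpConcepts}---is a reasonable elaboration of what the paper leaves implicit.
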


\smallskip

Our real motivation to investigate Theorem~\ref{thmWeakExpansion} comes from another basic theme in the study of dynamical systems, namely, the investigation of the measure-theoretic entropy and measure-theoretic pressure, and their maximizing measures known as the measures of maximal entropy and equilibrium states, respectively.

For a continuous map on a compact metric space, we can consider the topological pressure as a weighted version of the topological entropy, with the weight induced by a real-valued continuous function, called \emph{potential}. The Variational Principle identifies the topological pressure with the supremum of its measure-theoretic counterpart, the measure-theoretic pressure, over all invariant Borel probability measures \cite{Bow75, Wa76}. Under additional regularity assumptions on the map and the potential, one gets existence and uniqueness of an invariant Borel probability measure maximizing measure-theoretic pressure, called the equilibrium state for the given map and the potential. When the potential is $0$, the corresponding equilibrium state is known as the measure of maximal entropy. Often periodic points and iterated preimages are equidistributed in some appropriate sense with respect to such measures. See Section~\ref{subsctThermDynForm} for concepts mentioned here.

The existence, uniqueness, and various properties of equilibrium states have been studied in many different contexts (see for example, \cite{Bow75, Ru89, Pr90, KH95, Zi96, MauU03, BS03, Ol03, Yu03, PU10, MayU10}).

M.~Misiurewicz showed that asymptotic $h$-expansiveness guarantees that the measure-theoretic entropy $\mu\mapsto h_\mu(f)$ is upper semi-continuous \cite{Mi76}. We then get the following corollary from Theorem~\ref{thmWeakExpansion}.

\begin{cor}    \label{corUSC}
Let $f\: S^2 \rightarrow S^2$ be an expanding Thurston map without periodic critical points. Then the measure-theoretic entropy $h_\mu(f)$ considered as a function of $\mu$ on the space $\MMM(S^2,f)$ of $f$-invariant Borel probability measures is upper semi-continuous. Here $\MMM(S^2,f)$ is equipped with the weak$^*$ topology.
\end{cor}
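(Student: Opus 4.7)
The plan is a two-step deduction from ingredients already in place. First, I would invoke Theorem~\ref{thmWeakExpansion}: since $f$ has no periodic critical points, $f$ is asymptotically $h$-expansive. Second, I would cite the result of M.~Misiurewicz \cite{Mi76}, mentioned in the paragraph immediately preceding the corollary, which states that for any continuous self-map of a compact metric space, asymptotic $h$-expansiveness implies that the measure-theoretic entropy functional $\mu\mapsto h_\mu(f)$ is upper semi-continuous on the space of invariant Borel probability measures equipped with the weak$^*$ topology. Chaining these two statements yields the conclusion.

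To make this airtight, I would note at the start that $(S^2,d)$ is compact (for $d$ any visual metric, or indeed any topologically equivalent metric, since upper semi-continuity is a topological statement about $\MMM(S^2,f)$) and that $f$ is continuous, both built into the definition of a Thurston map. No additional content beyond the two citations is required, and there is no essential technical obstacle: the whole point of Theorem~\ref{thmWeakExpansion} is to supply the input that makes Misiurewicz's black box applicable in this non-smooth setting.

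The only bookkeeping issue worth flagging is that the paper adopts the formulation of asymptotic $h$-expansiveness from \cite{Do11} (see Section~\ref{subsctAsymHExpConcepts}) rather than Misiurewicz's original formulation from \cite{Mi73, Mi76}. The equivalence of the two formulations is standard and would be acknowledged in one sentence, so that the appeal to \cite{Mi76} is unambiguous. After that one remark the proof is a single implication arrow from Theorem~\ref{thmWeakExpansion} to the desired upper semi-continuity, and the corollary follows.
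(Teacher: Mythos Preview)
Your proposal is correct and matches the paper's approach exactly: the paper does not give a separate proof of this corollary, but simply states (in the paragraph immediately preceding it) that it follows from Theorem~\ref{thmWeakExpansion} together with Misiurewicz's result \cite{Mi76} that asymptotic $h$-expansiveness implies upper semi-continuity of $\mu\mapsto h_\mu(f)$. Your remark about the equivalence of the formulations in \cite{Do11} and \cite{Mi76} is a reasonable piece of bookkeeping, though the paper leaves this implicit.
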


Recall that if $X$ is a metric space, a function $h\: X\rightarrow [-\infty,+\infty]$ is \defn{upper semi-continuous} if $\limsup_{y\to x} h(y) \leq h(x)$ for all $x\in \X$.

In \cite{Li14}, we established the existence and uniqueness of the equilibrium state for an expanding Thurston map and a given real-valued H\"older continuous potential. Here the sphere $S^2$ is equipped with a natural metric induced by $f$, called a visual metric. (See Theorem~\ref{thmExistenceUniquenessES}.) The tools we used in \cite{Li14} are from the \emph{thermodynamical formalism}. Neither Theorem~\ref{thmWeakExpansion} nor Corollary~\ref{corUSC} was used there. Note that Corollary~\ref{corUSC} implies a partially stronger existence result than the one obtained in \cite{Li14}.

\begin{theorem}    \label{thmExistenceES}
Let $f\:S^2 \rightarrow S^2$ be an expanding Thurston map without periodic critical points and $\psi \in \CCC(S^2)$ be a real-valued continuous function on $S^2$ (with respect to the standard topology). Then there exists at least one equilibrium state for the map $f$ and the potential $\psi$.
\end{theorem}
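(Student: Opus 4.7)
The plan is to deduce existence of an equilibrium state from the standard upper semi-continuity argument, using Corollary~\ref{corUSC} as the key input. Recall that an equilibrium state for $(f,\psi)$ is an $f$-invariant Borel probability measure $\mu$ attaining the supremum in the Variational Principle
\[
P(f,\psi) \;=\; \sup\Bigl\{\, h_\nu(f) + \int \psi \,d\nu \,:\, \nu \in \MMM(S^2,f) \,\Bigr\}.
\]
So it suffices to show this supremum is attained.

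First I would recall the standard fact that $\MMM(S^2,f)$, equipped with the weak$^*$ topology, is a non-empty, convex, compact, metrizable subset of the dual of $\CCC(S^2)$; this uses only that $f$ is a continuous self-map of a compact metric space. Second, since $\psi \in \CCC(S^2)$, the functional $\nu \mapsto \int \psi \,d\nu$ is continuous on $\MMM(S^2,f)$. Third, the hypothesis that $f$ has no periodic critical points allows us to apply Corollary~\ref{corUSC}: the entropy map $\nu \mapsto h_\nu(f)$ is upper semi-continuous on $\MMM(S^2,f)$. Adding a continuous function to an upper semi-continuous one yields an upper semi-continuous function, so
\[
F\:\MMM(S^2,f)\to [-\infty,+\infty), \qquad F(\nu) \;=\; h_\nu(f) + \int \psi \,d\nu,
\]
is upper semi-continuous.

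An upper semi-continuous function on a non-empty compact metrizable space attains its supremum: take a maximizing sequence $\{\nu_n\}$, extract a weak$^*$-convergent subsequence $\nu_{n_k} \to \mu \in \MMM(S^2,f)$, and observe that $F(\mu) \geq \limsup_{k\to\infty} F(\nu_{n_k}) = \sup F$. By the Variational Principle, $\sup F = P(f,\psi)$, so this $\mu$ is an equilibrium state for $(f,\psi)$.

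There is no real obstacle here once Theorem~\ref{thmWeakExpansion} and Corollary~\ref{corUSC} are in hand; the entire argument is the classical Misiurewicz-type existence scheme \cite{Mi76}, and the only non-trivial ingredient is the upper semi-continuity of the entropy, which is precisely what asymptotic $h$-expansiveness delivers in this setting.
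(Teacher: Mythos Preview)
Your argument is correct and follows essentially the same route as the paper's own proof: compactness of $\MMM(S^2,f)$ (via Alaoglu), upper semi-continuity of $\mu\mapsto h_\mu(f)$ from Corollary~\ref{corUSC}, hence upper semi-continuity of $\mu\mapsto h_\mu(f)+\int\psi\,\mathrm{d}\mu$, and then the standard fact that an upper semi-continuous function on a compact space attains its supremum, which by the Variational Principle is an equilibrium state. The paper's version is terser but the logic is identical.
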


See Section~\ref{subsctThermDynForm} for a quick proof after necessary definitions are given precisely.

Once we know the existence and uniqueness of the equilibrium states, one natural question to ask is how periodic points and iterated preimages are distributed with respect to such measures. We know that for an expanding Thurston map, iterated preimages and preperiodic points (and in particular, periodic points) are equidistributated with respect to the unique measure of maximal entropy (see \cite{Li13} and \cite{HP09}).

Some versions of equidistribution of iterated preimages with respect to the unique equilibrium state for an expanding Thurston map and a H\"older continuous potential were obtained in \cite{Li14}. We record them in Proposition~\ref{propWeakConvPreImgWithWeight}. However, similar results for periodic points were inaccessible by the methods used in \cite{Li14} due to technical difficulties arising from the existence of critical points.

In this paper, thanks to Theorem~\ref{thmWeakExpansion} and Corollary~\ref{corUSC}, rather than trying to establish the equidistribution of periodic points directly, we derive some stronger results using a general framework devised by Y.~Kifer \cite{Ki90}. More precisely, we obtain \emph{level-2 large deviation principles} for periodic points with respect to equilibrium states in the context of expanding Thurston maps without periodic critical points and H\"older continuous potentials. We use a variant of Y.~Kifer's result formulated by H.~Comman and J.~Rivera-Letelier \cite{CRL11}, which is recorded in Theorem~\ref{thmAbsLargeDeviationPrincipleCRL} for the convenience of the reader. For related results on large deviation principles in the context of rational maps on the Riemann sphere under additional assumptions, see \cite{PSh96, PSr07, XF07, PRL11, Com09, CRL11}.

Denote the space of Borel probability measures on a compact metric space $X$ equipped with the weak$^*$ topology by $\PPP(X)$. A sequence $\{\Omega_n \}_{n\in\N}$ of Borel probability measures on $\PPP(X)$ is said to satisfy a \defn{level-2 large deviation principle with rate function $I$} if for each closed subset $\FF$ of $\PPP(X)$ and each open subset $\GG$ of $\PPP(X)$ we have 
$$
\limsup\limits_{n\to+\infty} \frac1n \log \Omega_n(\FF) \leq - \inf\{I(x) \,|\, x\in\FF \},
$$
and
$$
\liminf\limits_{n\to+\infty} \frac1n \log \Omega_n(\GG) \geq - \inf\{I(x) \,|\, x\in\GG \}.
$$
We refer the reader to \cite[Section~2.5]{CRL11} and the references therein for a more systematic introduction to the theory of large deviation principles.

In order to apply Theorem~\ref{thmAbsLargeDeviationPrincipleCRL}, we just need to verify three conditions:
\begin{enumerate}
\smallskip
\item[(1)] The existence and uniqueness of the equilibrium state.

\smallskip
\item[(2)] Some characterization of the topological pressure (see Proposition~\ref{propPressureLimitPeriodicPts} and Proposition~\ref{propPressureLimitPreImgs}).

\smallskip
\item[(3)] The upper semi-continuity of the measure-theoretic entropy.
\end{enumerate}

The first condition is established in \cite{Li14}. The second condition is weaker than the equidistribution results, and is within reach. The last condition is known for expanding Thurston maps without periodic critical points by Corollary~\ref{corUSC}. Thus we get the following level-2 large deviation principles.

\begin{theorem}  \label{thmLDP}
Let $f\: S^2\rightarrow S^2$ be an expanding Thurston map with no periodic critical points, and $d$ a visual metric on $S^2$ for $f$. Let $\PPP(S^2)$ denote the space of Borel probability measures on $S^2$ equipped with the weak$^*$ topology. Let $\phi$ be a real-valued H\"older continuous function on $(S^2,d)$, and $\mu_\phi$ be the unique equilibrium state for the map $f$ and the potential $\phi$.

For each $n\in\N$, let $W_n\: S^2\rightarrow\PPP(S^2)$ be the continuous function defined by
$$
W_n(x) = \frac1n \sum\limits_{i=0}^{n-1} \delta_{f^i(x)},
$$
and denote $S_n\phi(x) = \sum\limits_{i=0}^{n-1} \phi\(f^i(x)\)$ for $x\in S^2$. Fix an arbitrary sequence of functions $\{w_n\: S^2\rightarrow \R\}_{n\in\N}$ satisfying $w_n(x)\in [1,\deg_{f^n}(x)]$ for each $n\in\N$ and each $x\in S^2$. We consider the following sequences of Borel probability measures on $\PPP(S^2)$:

{\bf Iterated preimages:} Given a sequence $\{x_n\}_{n\in\N}$ of points in $S^2$, for each $n\in\N$, put
$$
\Omega_n (x_n)  = \sum\limits_{y\in f^{-n}(x_n)} \frac{ w_n(y) \exp(S_n\phi(y))}{\sum_{z\in f^{-n}(x_n)} w_n(z) \exp(S_n\phi(z)) }\delta_{W_n(y)}. 
$$

{\bf Periodic points:} For each $n\in\N$, put
$$
\Omega_n = \sum\limits_{x=f^n(x)} \frac{ w_n(x) \exp(S_n\phi(x))}{\sum_{y=f^n(y)} w_n(y) \exp(S_n\phi(y)) }\delta_{W_n(x)}.
$$

Then each of the sequences $\{\Omega_n(x_n)\}_{n\in\N}$ and $\{\Omega_n\}_{n\in\N}$ converges to $\delta_{\mu_\phi}$ in the weak$^*$ topology, and satisfies a large deviation principle with rate function $I^\phi \: \PPP(S^2) \rightarrow [0,+\infty]$ given by
\begin{equation}    \label{eqRateFnI}
I^\phi (\mu) = 
\left\{
	\begin{array}{ll}
		P(f,\phi) -\int\! \phi \,\mathrm{d}\mu - h_\mu(f)  & \mbox{if } \mu\in\MMM(S^2,f); \\
		+\infty & \mbox{if } \mu\in\PPP(S^2) \setminus \MMM(S^2,f).
	\end{array}
\right.
\end{equation}
Furthermore, for each convex open subset $\GG$ of $\PPP(S^2)$ containing some invariant measure, we have
\begin{equation}  \label{eqInfI}
-\inf\limits_{\GG} I^\phi  = \lim\limits_{n\to+\infty}  \frac{1}{n} \log \Omega_n (x_n)(\GG) =\lim\limits_{n\to+\infty}  \frac{1}{n} \log \Omega_n(\GG)
\end{equation}
and (\ref{eqInfI}) remains true with $\GG$ replaced by its closure $\overline{\GG}$.
\end{theorem}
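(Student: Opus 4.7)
The plan is to apply the abstract level-2 large deviation principle of Y.~Kifer, in the form recorded as Theorem~\ref{thmAbsLargeDeviationPrincipleCRL}, to the dynamical system $(S^2,f)$ and the H\"older continuous potential $\phi$. Its three hypotheses translate, in our setting, to: (i) existence and uniqueness of the equilibrium state $\mu_\phi$ for $(f,\phi)$; (ii) identification of the topological pressure $P(f,\phi)$ as the exponential growth rate of the unnormalized partition functions appearing in $\Omega_n(x_n)$ and $\Omega_n$; (iii) upper semi-continuity of $\mu\mapsto h_\mu(f)$ on $\MMM(S^2,f)$. Ingredient (i) is furnished by Theorem~\ref{thmExistenceUniquenessES} from \cite{Li14}, while (iii) is exactly Corollary~\ref{corUSC}, itself a consequence of Theorem~\ref{thmWeakExpansion} and M.~Misiurewicz's upper semi-continuity theorem \cite{Mi76}. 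The substantive work is therefore the verification of (ii), which I would split into Proposition~\ref{propPressureLimitPreImgs} for preimages and Proposition~\ref{propPressureLimitPeriodicPts} for periodic points.

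For the preimage case, I would prove that
\[
P(f,\phi) = \lim_{n\to+\infty} \frac1n \log \sum_{y\in f^{-n}(x_n)} w_n(y) \exp(S_n\phi(y)),
\]
uniformly in the sequence $\{x_n\}\subset S^2$ and in the allowed choice of weights $w_n(y)\in[1,\deg_{f^n}(y)]$. The upper bound proceeds by a standard covering estimate once one passes to a Markov-type partition of $S^2$ into $n$-tiles arising from the Bonk--Meyer cellular decomposition; the H\"older regularity of $\phi$ with respect to the visual metric $d$ delivers uniform oscillation control $|S_n\phi(y)-S_n\phi(y')|\leq C$ whenever $y,y'$ lie in a common $n$-tile, which is the required bounded distortion. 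The lower bound is extracted from a Bowen-type $(n,\ve)$-separating set argument after noting that the preimages distribute one to each $n$-tile with multiplicity at most $\deg_{f^n}$, so weights in $[1,\deg_{f^n}(y)]$ produce only a subexponential modification of the partition function; this parallels the approach to the measure of maximal entropy in \cite{Li13}.

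For the periodic point case, the additional difficulty, already flagged in \cite{Li14}, is that critical points obstruct a clean bijection between fixed points of $f^n$ and inverse branches. My approach would be to group fixed points according to the $n$-tile that contains them, to pair each such fixed point with a preimage of a reference point under the corresponding inverse branch of $f^n$, and to use the visual-metric H\"older control of $\phi$ together with the count $\#\{x:f^n(x)=x\}=(\deg f)^n+o((\deg f)^n)$ from \cite{Li13} to compare the weighted periodic sum with a weighted preimage sum up to a subexponential error. The hypothesis that $f$ has no \emph{periodic} critical points is used precisely here, since a periodic critical point would produce fixed points whose local degree grows with $n$, breaking the subexponential comparison; its absence keeps $\deg_{f^n}$ uniformly bounded along periodic orbits. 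I expect this comparison step to be the main technical obstacle.

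Once (i), (ii), (iii) are all in place, Theorem~\ref{thmAbsLargeDeviationPrincipleCRL} delivers the large deviation principle for both $\{\Omega_n(x_n)\}_{n\in\N}$ and $\{\Omega_n\}_{n\in\N}$ with the rate function $I^\phi$ of (\ref{eqRateFnI}). The convergence $\Omega_n(x_n)\to\delta_{\mu_\phi}$ and $\Omega_n\to\delta_{\mu_\phi}$ in the weak$^*$ topology then follows at once: $\mu_\phi$ is the unique zero of $I^\phi$ by the variational principle and the uniqueness of the equilibrium state, so the LDP upper bound applied to the complement of any weak$^*$ neighbourhood of $\mu_\phi$ gives exponential decay of the corresponding masses. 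Finally, $I^\phi$ is convex on $\PPP(S^2)$ because $\mu\mapsto h_\mu(f)$ is affine on $\MMM(S^2,f)$ and $\mu\mapsto\int\phi\,\mathrm{d}\mu$ is linear; combined with the lower semi-continuity of $I^\phi$ given by (iii), a standard convex-analysis argument shows that $\inf_\GG I^\phi=\inf_{\overline\GG} I^\phi$ for every convex open $\GG\subset\PPP(S^2)$ meeting $\MMM(S^2,f)$. Matching the LDP upper and lower bounds then yields (\ref{eqInfI}) both for $\GG$ and for $\overline\GG$.
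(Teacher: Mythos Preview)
Your overall strategy matches the paper's: apply Theorem~\ref{thmAbsLargeDeviationPrincipleCRL} with $H=\Holder{\alpha}(S^2,d)$, verify upper semi-continuity of entropy via Corollary~\ref{corUSC}, uniqueness of equilibrium states via Theorem~\ref{thmExistenceUniquenessES}, and then check the asymptotic~(\ref{eqThmCCondition}) by reducing it to the pressure formulae of Propositions~\ref{propPressureLimitPreImgs} and~\ref{propPressureLimitPeriodicPts} applied to $\phi+\psi$ and $\phi$.

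Two points where your sketch diverges from the paper are worth flagging. First, you locate the use of ``no periodic critical points'' in the periodic-point comparison, arguing that it keeps $\deg_{f^n}$ bounded along periodic orbits. That is a valid shortcut for your purposes, but the paper actually proves Proposition~\ref{propPressureLimitPeriodicPts} for \emph{all} expanding Thurston maps, with no assumption on periodic critical points: it fixes $z$ in the interior of a $0$-edge, pairs black and white $n$-tiles along $n$-edges over $z$, and uses the equidistribution $\xi_n\overset{w^*}{\to}m_\phi$ (Proposition~\ref{propWeakConvPreImgWithWeight}) together with $m_\phi(\CC)=0$ to show that the contribution of tiles meeting $\CC$ is negligible; on the remaining pairs the unique fixed point lies in the interior, so $\deg_{f^n}=1$ automatically. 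In the paper, the no-periodic-critical-points assumption enters instead in Corollary~\ref{corUSC} and in the preimage formula~(\ref{eqPressureLimitPreImgsWODeg}) for general weights $w_n$. Your simpler route works under the hypotheses of Theorem~\ref{thmLDP}, but gives a weaker auxiliary statement.

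Second, your final paragraph is superfluous: the weak$^*$ convergence $\Omega_n\to\delta_{\mu_\phi}$ and the equalities in~(\ref{eqInfI}) for both $\GG$ and $\overline{\GG}$ are already conclusions of Theorem~\ref{thmAbsLargeDeviationPrincipleCRL} once its hypotheses are verified; no separate convexity argument is needed.
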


As an immediate consequence, we get the following corollary. See Section~\ref{subsctEquidistr} for the proof.

\begin{cor}  \label{corMeasTheoPressure}
Let $f\: S^2\rightarrow S^2$ be an expanding Thurston map with no periodic critical points, and $d$ a visual metric on $S^2$ for $f$. Let $\phi$ be a real-valued H\"older continuous function on $(S^2,d)$, and $\mu_\phi$ be the unique equilibrium state for the map $f$ and the potential $\phi$. Given a sequence $\{x_n\}_{n\in\N}$ of points in $S^2$. Fix an arbitrary sequence of functions $\{w_n\: S^2\rightarrow \R\}_{n\in\N}$ satisfying $w_n(x)\in [1,\deg_{f^n}(x)]$ for each $n\in\N$ and each $x\in S^2$.

Then for each $\mu\in\MMM(S^2,f)$, and each convex local basis $\GGG_\mu$ of $\PPP(S^2)$ at $\mu$, we have
\begin{align}   
   & h_\mu(f) + \int\! \phi \,\mathrm{d}\mu  \notag \\
=  & \inf \Bigg\{ \lim\limits_{n\to+\infty} \frac{1}{n} \log \sum\limits_{y\in f^{-n}(x_n), \, W_n(y)\in\GG} w_n(y) e^{S_n\phi(y)} \,\Bigg|\, \GG\in \GGG_\mu \Bigg\}   \label{eqMeasTheoPressure}\\
=  & \inf \Bigg\{ \lim\limits_{n\to+\infty} \frac{1}{n} \log \sum\limits_{x=f^n(x), \, W_n(x)\in\GG} w_n(x) e^{S_n\phi(x)} \,\Bigg|\, \GG\in \GGG_\mu \Bigg\}.   \notag
\end{align}
Here $W_n$ and $S_n\phi$ are as defined in Theorem~\ref{thmLDP}.
\end{cor}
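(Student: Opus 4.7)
The plan is to derive the corollary directly from the level-2 large deviation principle in Theorem~\ref{thmLDP}, specifically from the variational formula (\ref{eqInfI}), combined with the characterizations of $P(f,\phi)$ via iterated preimages and periodic points furnished by Propositions~\ref{propPressureLimitPreImgs} and \ref{propPressureLimitPeriodicPts}. For the iterated preimage identity, I would set
$$A_n(\GG) = \sum\limits_{y\in f^{-n}(x_n),\, W_n(y)\in\GG} w_n(y) e^{S_n\phi(y)}, \qquad B_n = \sum\limits_{z\in f^{-n}(x_n)} w_n(z) e^{S_n\phi(z)},$$
so that $\Omega_n(x_n)(\GG) = A_n(\GG)/B_n$. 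Since Proposition~\ref{propPressureLimitPreImgs} gives $\frac{1}{n} \log B_n \to P(f,\phi)$ and each $\GG\in \GGG_\mu$ is convex, open, and contains the invariant measure $\mu$, equation (\ref{eqInfI}) applies to yield
$$\lim\limits_{n\to+\infty} \frac{1}{n} \log A_n(\GG) \;=\; P(f,\phi) - \inf\limits_{\nu\in\GG} I^\phi(\nu).$$
In particular, the limits appearing inside the infimum of the right-hand side of (\ref{eqMeasTheoPressure}) exist.

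The remaining step is to take the infimum over $\GG\in\GGG_\mu$, which reduces to computing $\sup_{\GG\in\GGG_\mu} \inf_{\nu\in\GG} I^\phi(\nu)$. I would show that this supremum equals $I^\phi(\mu)$ by verifying that the rate function $I^\phi$ is lower semi-continuous on $\PPP(S^2)$. Lower semi-continuity follows because $\MMM(S^2,f)$ is closed in $\PPP(S^2)$, the map $\nu\mapsto \int\! \phi \,\mathrm{d}\nu$ is weak$^*$-continuous, and Corollary~\ref{corUSC} shows $\nu\mapsto -h_\nu(f)$ is lower semi-continuous on $\MMM(S^2,f)$. For $\mu\in\MMM(S^2,f)$, the local basis property of $\GGG_\mu$ combined with lower semi-continuity forces $\sup_{\GG\in\GGG_\mu} \inf_\GG I^\phi = I^\phi(\mu) = P(f,\phi) - \int\! \phi \,\mathrm{d}\mu - h_\mu(f)$. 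Subtracting $P(f,\phi)$ from both sides of the previous display then gives the first equality of (\ref{eqMeasTheoPressure}).

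The periodic point version is entirely parallel: I would replace $\Omega_n(x_n)$ by $\Omega_n$ and use Proposition~\ref{propPressureLimitPeriodicPts} to conclude that $\frac{1}{n} \log \sum_{y=f^n(y)} w_n(y) e^{S_n\phi(y)} \to P(f,\phi)$, after which the second equality in (\ref{eqInfI}) supplies the analogous limit and the same lower semi-continuity argument closes the identity. The only mildly delicate step is the verification of lower semi-continuity of $I^\phi$, which is where Corollary~\ref{corUSC} (and hence Theorem~\ref{thmWeakExpansion}) is essential; beyond that the proof is a direct unpacking of Theorem~\ref{thmLDP}, so I do not anticipate any substantive obstacle.
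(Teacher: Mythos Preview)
Your proposal is correct and follows essentially the same route as the paper: both arguments combine (\ref{eqInfI}) from Theorem~\ref{thmLDP} with Propositions~\ref{propPressureLimitPreImgs} and \ref{propPressureLimitPeriodicPts} to handle the normalizing constant, and then use the lower semi-continuity of $I^\phi$ (equivalently, the upper semi-continuity of $-I^\phi$, via Corollary~\ref{corUSC}) together with the local basis property to identify $\sup_{\GG\in\GGG_\mu}\inf_\GG I^\phi$ with $I^\phi(\mu)$. The paper's write-up is slightly more compressed, but the logical structure is identical.
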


As mentioned above, equidistribution results follow from corresponding level-2 large deviation principles.

\begin{cor}  \label{corEquidistr}
Let $f\: S^2\rightarrow S^2$ be an expanding Thurston map with no periodic critical points, and $d$ a visual metric on $S^2$ for $f$. Let $\phi$ be a real-valued H\"older continuous function on $(S^2,d)$, and $\mu_\phi$ be the unique equilibrium state for the map $f$ and the potential $\phi$. Fix an arbitrary sequence of functions $\{w_n\: S^2\rightarrow \R\}_{n\in\N}$ satisfying $w_n(x)\in [1,\deg_{f^n}(x)]$ for each $n\in\N$ and each $x\in S^2$.

We consider the following sequences of Borel probability measures on $S^2$:

{\bf Iterated preimages:} Given a sequence $\{x_n\}_{n\in\N}$ of points in $S^2$, for each $n\in\N$, put
$$
\nu_n = \sum\limits_{y\in f^{-n}(x_n)} \frac{ w_n(y) \exp(S_n\phi(y))}{\sum_{z\in f^{-n}(x_n)} w_n(z) \exp(S_n\phi(z)) }\frac1n \sum\limits_{i=0}^{n-1} \delta_{f^i(y)},
$$

{\bf Periodic points:} For each $n\in\N$, put
$$
\eta_n  = \sum\limits_{x=f^n(x)} \frac{ w_n(x) \exp(S_n\phi(x))}{\sum_{y=f^n(y)} w_n(y) \exp(S_n\phi(y)) }\frac1n \sum\limits_{i=0}^{n-1} \delta_{f^i(x)}.
$$

Then as $n\longrightarrow +\infty$, 
\begin{equation*}
\nu_n \stackrel{w^*}{\longrightarrow} \mu_\phi,\quad \text{and} \quad
\eta_n \stackrel{w^*}{\longrightarrow} \mu_\phi.
\end{equation*}
Here $S_n$ is defined as in Theorem~\ref{thmLDP}.
\end{cor}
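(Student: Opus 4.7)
The plan is to deduce the corollary directly from Theorem~\ref{thmLDP} by identifying $\nu_n$ and $\eta_n$ as \emph{barycenters} of the Borel probability measures $\Omega_n(x_n)$ and $\Omega_n$ on $\PPP(S^2)$. Concretely, for any $g\in\CCC(S^2)$, I would first write
$$
\int \! g \,\mathrm{d}\nu_n = \sum_{y\in f^{-n}(x_n)} \frac{w_n(y) e^{S_n\phi(y)}}{\sum_{z\in f^{-n}(x_n)} w_n(z) e^{S_n\phi(z)}} \int \! g \,\mathrm{d} W_n(y),
$$
which, by definition of $\Omega_n(x_n)$, equals $\int_{\PPP(S^2)} \Phi_g \,\mathrm{d}\Omega_n(x_n)$, where $\Phi_g\: \PPP(S^2) \to \R$ is the bounded weak$^*$-continuous functional $\nu\mapsto \int g \,\mathrm{d}\nu$. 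The exact same identity holds with $\eta_n$ in place of $\nu_n$ and $\Omega_n$ in place of $\Omega_n(x_n)$; both are routine swaps of finite sums.

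Next I would invoke Theorem~\ref{thmLDP}, which asserts that $\Omega_n(x_n) \to \delta_{\mu_\phi}$ and $\Omega_n \to \delta_{\mu_\phi}$ in the weak$^*$ topology on $\PPP(\PPP(S^2))$. Testing weak$^*$ convergence against the bounded continuous function $\Phi_g$ then yields
$$
\int \! g \,\mathrm{d}\nu_n = \int_{\PPP(S^2)} \! \Phi_g \,\mathrm{d}\Omega_n(x_n) \;\longrightarrow\; \Phi_g(\mu_\phi) = \int \! g \,\mathrm{d}\mu_\phi,
$$
and likewise $\int g \,\mathrm{d}\eta_n \to \int g \,\mathrm{d}\mu_\phi$. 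Since $g\in\CCC(S^2)$ is arbitrary, this gives both $\nu_n \stackrel{w^*}{\longrightarrow} \mu_\phi$ and $\eta_n \stackrel{w^*}{\longrightarrow} \mu_\phi$, which is exactly the conclusion of the corollary.

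There is no genuine obstacle: the only content to verify is the barycenter identity above, and continuity of $\Phi_g$ is immediate from the definition of the weak$^*$ topology on $\PPP(S^2)$. All the substantive work — the existence and uniqueness of $\mu_\phi$, the upper semi-continuity of $\mu\mapsto h_\mu(f)$ supplied by Corollary~\ref{corUSC}, and the topological pressure characterizations feeding into the Kifer--Comman--Rivera-Letelier framework — has already been packaged into Theorem~\ref{thmLDP}, so that the equidistribution of weighted iterated preimages and weighted periodic points drops out purely formally from the level-2 large deviation principles for the associated measures on $\PPP(S^2)$.
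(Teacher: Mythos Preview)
Your proof is correct and takes a genuinely different, cleaner route than the paper's. You observe that $\nu_n$ (resp.\ $\eta_n$) is the barycenter of $\Omega_n(x_n)$ (resp.\ $\Omega_n$), i.e., $\int g\,\mathrm{d}\nu_n=\int_{\PPP(S^2)}\Phi_g\,\mathrm{d}\Omega_n(x_n)$ for $\Phi_g(\mu)=\int g\,\mathrm{d}\mu$, and then simply apply the conclusion of Theorem~\ref{thmLDP} that $\Omega_n(x_n)\to\delta_{\mu_\phi}$ and $\Omega_n\to\delta_{\mu_\phi}$ in $\PPP(\PPP(S^2))$; since $\Phi_g$ is weak$^*$-continuous this immediately gives $\int g\,\mathrm{d}\nu_n\to\int g\,\mathrm{d}\mu_\phi$. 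The paper instead argues through Corollary~\ref{corMeasTheoPressure}: for each convex neighborhood $\GG$ of $\mu_\phi$ it splits $\nu_n$ into a convex combination $\nu_n'$ of the $W_n(y)$ lying in $\GG$ plus a remainder, uses the pressure characterization and compactness of $\PPP(S^2)\setminus\GG$ to show the remainder has total variation $Z_n^-(\GG)/(Z_n^+(\GG)+Z_n^-(\GG))\to 0$ via explicit exponential estimates, and concludes that every subsequential limit lies in $\overline{\GG}$. Your approach extracts the equidistribution directly from the weak$^*$ convergence already packaged into Theorem~\ref{thmLDP}, whereas the paper's argument effectively re-derives that convergence at the barycenter level from the large deviation rate function; the paper's version is more self-contained with respect to the LDP machinery, but yours is the more economical deduction given what Theorem~\ref{thmLDP} already states.
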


\begin{rem}
Since $S_n\phi(f^i(x))=S_n\phi(x)$ for $i\in\N$ if $f^n(x)=x$, we get
$$
\eta_n  = \sum\limits_{x=f^n(x)} \frac{ \frac{S_n w_n(x)}{n} \exp(S_n\phi(x))}{\sum_{y=f^n(y)} w_n(y) \exp(S_n\phi(y)) }\delta_x,
$$
for $n\in\N$. In particular, when $w_n(\cdot)\equiv 1$, 
$$
\eta_n  = \sum\limits_{x=f^n(x)} \frac{  \exp(S_n\phi(x))}{\sum_{y=f^n(y)} \exp(S_n\phi(y)) }\delta_x;
$$
when $w_n(x)=\deg_{f^n}(x)$, since $\deg_{f^n}(f^i(x))=\deg_{f^n}(x)$ for $i\in\N$ if $f^n(x)=x$, we have
$$
\eta_n  = \sum\limits_{x=f^n(x)} \frac{  \deg_{f^n}(x)\exp(S_n\phi(x))}{\sum_{y=f^n(y)} \deg_{f^n}(y) \exp(S_n\phi(y)) }\delta_x.
$$
\end{rem}

See Section~\ref{subsctEquidistr} for the proof of Corollary~\ref{corEquidistr}. Note that the part of Corollary~\ref{corEquidistr} on iterated preimages generalizes (\ref{eqWeakConvPreImgToMuSumWithWeight}) and (\ref{eqWeakConvPreImgToMuTildeWithWeight}) in Proposition~\ref{propWeakConvPreImgWithWeight} in the context of expanding Thurston maps without periodic critical points. We also remark that our results Corollary~\ref{corUSC} through Corollary~\ref{corEquidistr} are only known in this context. In particular, the following questions for expanding Thurston maps $f\: S^2\rightarrow S^2$ with at least one periodic critical point are still open.

\begin{question}
Is the measure-theoretic entropy $\mu\mapsto h_\mu(f)$ upper semi-continuous?
\end{question}

\begin{question}
Are iterated preimages and periodic points equidistributed with respect to the unique equilibrium state for a H\"older continuous potential?
\end{question}

Note that regarding Question~2, we know that iterated preimages, counted with local degree, are equidistributed with respect to the equilibrium state by (\ref{eqWeakConvPreImgToMuSumWithWeight}) in Proposition~\ref{propWeakConvPreImgWithWeight}. If Question~1 can be answered positively, then the mechanism of Theorem~\ref{thmAbsLargeDeviationPrincipleCRL} works and we get that the equidistribution of periodic points from the corresponding large deviation principle. However, for iterated preimages without counting local degree, (i.e., when $w_n(\cdot)\neq \deg_{f^n}(\cdot)$ in Corollary~\ref{corEquidistr}, and in particular, when $w_n(\cdot)\equiv 1$,) the verification of Condition~(2) mentioned earlier for Theorem~\ref{thmAbsLargeDeviationPrincipleCRL} to apply still remains unknown. Compare (\ref{eqPressureLimitPreImgsWDeg}) and (\ref{eqPressureLimitPreImgsWODeg}) in Proposition~\ref{propPressureLimitPreImgs}.

\smallskip

We will now give a brief description of the structure of this paper.

After fixing some notation in Section~\ref{sctNotation}, we give a quick review of Thurston maps in Section~\ref{sctThurstonMap}. We direct the reader to \cite[Section~3]{Li14} for a more detailed introduction to such maps and the terminology that we use in this paper. However, we do record explicitly most of the results from \cite{BM10,Li13,Li14} that will be used in this paper.

In Section~\ref{sctAssumptions}, we state the assumptions on some of the objects in this paper, which we are going to repeatedly refer to later as \emph{the Assumptions}. Note that these assumptions are the same as those in \cite[Section~4]{Li14}.

Section~\ref{sctAsymHExp} is devoted to the investigation of the weak expansion properties of expanding Thurston maps and the proof of Theorem~\ref{thmWeakExpansion}.

We first introduce basic concepts in Section~\ref{subsctAsymHExpConcepts}. We review the notion of topological conditional entropy $h(g|\lambda)$ of a continuous map $g\: X\rightarrow X$ (on a compact metric space $X$) given an open cover $\lambda$ of $X$, and the notion of topological tail entropy $h^*(g)$ of $g$. The latter was first introduced by M.~Misiurewicz under the name ``topological conditional entropy'' \cite{Mi73,Mi76}. We adopt the terminology and formulations by T.~Downarowicz in \cite{Do11}. We then define $h$-expansiveness and asymptotic $h$-expansiveness using these notions.

In Section~\ref{subsctWeakExpLemmas}, we prove four lemmas that will be used in the proof of the asymptotic $h$-expansiveness of expanding Thurston maps without periodic critical points. Lemma~\ref{lmLocInjectAwayFromCrit} states that any expanding Thurston map is uniformly locally injective away from the critical points, in the sense that if one fixes such a map $f$ and a visual metric $d$ on $S^2$ for $f$, then for each $\delta>0$ sufficiently small and each $x\in S^2$, the map $f$ is injective on the $\delta$-ball centered at $x$ as long as $x$ is not in a $\tau(\delta)$-ball of any critical point of $f$, where $\tau(\delta)$ can be made arbitrarily small  if one lets $\delta$ go to $0$. In Lemma~\ref{lmTileInFlower} we prove a few properties of flowers in the cell decompositions of $S^2$ induced by an expanding Thurston map and some special $f$-invariant Jordan curve. Lemma~\ref{lmCoverByFlowers} gives a covering lemma to cover sets of the form $\bigcap\limits_{i=0}^n f^{-i}(W_i)$ by $(m+n)$-flowers, where $m\in\N_0$, $n\in\N$, and each $W_i$ is an $m$-flower. Finally, we review some basic concepts in graph theory, and provide a simple upper bound of number of leaves of certain trees in Lemma~\ref{lmTree}. Note that we will not use any nontrivial facts from graph theory in this paper.

Section~\ref{subsctProofWeakExp} consists of the proof of Theorem~\ref{thmWeakExpansion} in the form of three separate theorems. Namely, we show in Theorem~\ref{thmAsympHExpWOPC} the asymptotic $h$-expansiveness of expanding Thurston maps without periodic critical points. The proof relies on a quantitative upper bound of the frequency for an orbit under such a map to get close to the set of critical points. Lemma~\ref{lmTree} and terminology from graph theory is used here to make the statements in the proof precise. We then prove in Theorem~\ref{thmNotAsympHExp} and Theorem~\ref{thmNotHExp} the lack of asymptotic $h$-expansiveness of expanding Thurston maps with periodic critical points and the lack of $h$-expansiveness of expanding Thurston maps without periodic critical points, respectively, by explicit constructions of periodic sequences $\{v_i\}_{i\in\N}$ of $m$-vertices for which one can give lower bounds for the numbers of open sets in the open cover $\bigvee\limits_{j=0}^{n-1} f^{-j}\(\W^m\)$ needed to cover the set $\bigcap\limits_{j=0}^{n-1} f^{-j}(W^m(v_{n-j}))$, for $l,m,n\in\N$ sufficiently large. Here $W^m(v_{n-j})$ denotes the $m$-flower of $v_{n-j}$ (see (\ref{defFlower})), and $\W^m$ is the set of all $m$-flowers (see (\ref{defSetNFlower})). These lower bounds lead to the conclusion that the topological tail entropy and topological conditional entropy, respectively, are strictly positive, proving the corresponding theorems (compare with Defintion~\ref{defHExp} and Definition~\ref{defAsympHExp}). The periodic sequence $\{v_i\}_{i\in\N}$ of $m$-vertices in the proof of Theorem~\ref{thmNotAsympHExp} shadows a certain infinite backward pseudo-orbit in such a way that each period of $\{v_i\}_{i\in\N}$ begins with a backward orbit starting at a critical point $p$ which is a fixed point of $f$, and approaching $p$ as the index $i$ increases, and then ends with a constant sequence staying at $p$. The fact that the constant part of each period of $\{v_i\}_{i\in\N}$ can be made arbitrarily long is essential here and is not true if $f$ has no periodic critical points. The periodic sequence $\{v_i\}_{i\in\N_0}$ of $m$-vertices in the proof of Theorem~\ref{thmNotHExp} shadows a certain infinite backward pseudo-orbit in such a way that each period of $\{v_i\}_{i\in\N_0}$ begins with a backward orbit starting at $f(p)$ and $p$, and approaching $f(p)$ as the index $i$ increases, and then ends with $f(p)$. In this case $p$ is a critical point whose image $f(p)$ is a fixed point. In both constructions, we may need to consider an iterate of $f$ for the existence of $p$ with the required properties. Combining Theorems~\ref{thmAsympHExpWOPC}, \ref{thmNotAsympHExp}, and \ref{thmNotHExp}, we get Theorem~\ref{thmWeakExpansion}.

Section~\ref{sctLDP} is devoted to the study of large deviation principles and equidistribution results for periodic points and iterated preimages of expanding Thurston maps without periodic critical points. The idea is to apply a general framework devised by Y.~Kifer \cite{Ki90} to obtain level-2 large deviation principles, and to derive the equidistribution results as consequences.

In Section~\ref{subsctThermDynForm}, we review briefly the theory of thermodynamical formalism and recall relevant concepts and results in this theory from \cite{Li14} in the context of expanding Thurston maps and H\"older continuous potentials. After the necessary concepts are introduced, we provide a quick proof of Theorem~\ref{thmExistenceES}, which asserts the existence of equilibrium states for expanding Thurston maps without periodic critical points and given continuous potentials.

In Section~\ref{subsctLDP}, we give a brief review of level-2 large deviation principles in our context. We record the theorem of Y.~Kifer \cite{Ki90}, reformulated by H.~Comman and J.~Rivera-Letelier \cite{CRL11}, on level-2 large deviation principles. This result, stated in Theorem~\ref{thmAbsLargeDeviationPrincipleCRL}, will be applied later to our context.

After proving and recording several technical lemmas in Section~\ref{subsctLDPLemmas}, we generalize some characterization of topological pressure in Section~\ref{subsctTopPres} in our context. More precisely, we use equidistribution results for iterated preimages from \cite{Li14} recorded in Proposition~\ref{propWeakConvPreImgWithWeight} to show in Proposition~\ref{propPressureLimitPreImgs} and Proposition~\ref{propPressureLimitPeriodicPts} that
\begin{equation}   \label{eqTopPresCharact}
P(f,\phi)=\lim\limits_{n\to +\infty} \frac{1}{n} \log \sum w_n(y) \exp(S_n\phi(y)),
\end{equation}
where the sum is taken over preimages under $f^n$ in Proposition~\ref{propPressureLimitPreImgs}, and over periodic points in Proposition~\ref{propPressureLimitPeriodicPts}, the potential $\phi\:S^2 \rightarrow \R$ is H\"older continuous with respect to a visual metric $d$, and the weight $w_n(y)\in [1,\deg_{f^n}(y)]$ for $n\in\N$ and $y\in S^2$. We note that for periodic points, the equation (\ref{eqTopPresCharact}) is established in Proposition~\ref{propPressureLimitPeriodicPts} for all expanding Thurston maps, but for iterated preimages,  we only obtain (\ref{eqTopPresCharact}) for expanding Thurston maps without periodic critical points in Proposition~\ref{propPressureLimitPreImgs}.

In Section~\ref{subsctProofLDP}, by applying Theorem~\ref{thmAbsLargeDeviationPrincipleCRL} to give a proof of Theorem~\ref{thmLDP}, we finally establish level-2 large deviation principles in the context of expanding Thurston maps without periodic critical points and given H\"older continuous potentials. 

Section~\ref{subsctEquidistr} consists of the proofs of Corollary~\ref{corMeasTheoPressure} and Corollary~\ref{corEquidistr}. We first obtain characterizations of the measure-theoretic pressure in terms of the infimum of certain limits involving periodic points and iterated preimages (Corollary~\ref{corMeasTheoPressure}). Such characterizations are then used in the proof of the equidistribution results (Corollary~\ref{corEquidistr}).

\bigskip
\noindent
\textbf{Acknowledgments.} The author wants to express his gratitude to the Institut Henri Poincar\'e for the kind hospitality during his stay in Paris from January to March 2014, when a major part of this work was carried out. The author also would like to thank N.-P.~Chung for explaining his work on weak expansiveness for actions of sofic groups. Last but not least, the author wants to express his deepest gratitude to M.~Bonk for his patient teaching and guidance as the advisor of the author.

\section{Notation} \label{sctNotation}
Let $\C$ be the complex plane and $\widehat{\C}$ be the Riemann sphere. We use the convention that $\N=\{1,2,3,\dots\}$ and $\N_0 = \{0\} \cup \N$. As usual, the symbol $\log$ denotes the logarithm to the base $e$.

The cardinality of a set $A$ is denoted by $\card{A}$. For $x\in\R$, we define $\lfloor x\rfloor$ as the greatest integer $\leq x$, and $\lceil x \rceil$ the smallest integer $\geq x$.

Let $g\: X\rightarrow Y$ be a function between two sets $X$ and $Y$. We denote the restriction of $g$ to a subset $Z$ of $X$ by $g|_Z$. 

Let $(X,d)$ be a metric space. For subsets $A,B\subseteq X$, we set $d(A,B)=\inf \{d(x,y)\,|\, x\in A,\,y\in B\}$, and $d(A,x)=d(x,A)=d(A,\{x\})$ for $x\in X$. For each subset $Y\subseteq X$, we denote the diameter of $Y$ by $\diam_d(Y)=\sup\{d(x,y)\,|\,x,y\in Y\}$, the interior of $Y$ by $\inter Y$, the closure of $Y$ by $\overline Y$, and the characteristic function of $Y$ by $\mathbbm{1}_Y$, which maps each $x\in Y$ to $1\in\R$. For each $r>0$, we define $N^r_d(A)$ to be the open $r$-neighborhood $\{y\in X \,|\, d(y,A)<r\}$ of $A$, and $\overline{N^r_d}(A)$ the closed $r$-neighborhood $\{y\in X \,|\, d(y,A)\leq r\}$ of $A$. For $x\in X$, we denote the open ball of radius $r$ centered at $x$ by $B_d(x, r)$. 

We set $\CCC(X)$ to be the space of continuous functions from $X$ to $\R$, by $\MMM(X)$ the set of finite signed Borel measures, and $\PPP(X)$ the set of Borel probability measures on $X$. For $\mu\in\MMM(X)$, we use $\Norm{\mu}$ to denote the total variation norm of $\mu$, $\supp \mu$ the support of $\mu$, and
$$
\langle \mu,u \rangle = \int \! u \,\mathrm{d}\mu
$$
for each $u\in\CCC(S^2)$. For a point $x\in X$, we define $\delta_x$ as the Dirac measure supported on $\{x\}$. For $g\in\CCC(X)$ we set $\MMM(X,g)$ to be the set of $g$-invariant Borel probability measures on $X$. If we do not specify otherwise, we equip $\CCC(X)$ with the uniform norm $\Norm{\cdot}_\infty$, and equip both $\MMM(X)$ and $\MMM(X,g)$ with the weak$^*$ topology.

The space of real-valued H\"{o}lder continuous functions with an exponent $\alpha\in (0,1]$ on a compact metric space $(X,d)$ is denoted as $\Holder{\alpha}(X,d)$. For given $f\: X \rightarrow X$ and $\varphi \in \CCC(X)$, we define
\begin{equation}    \label{eqDefSnPt}
S_n \varphi (x)  = \sum\limits_{j=0}^{n-1} \varphi(f^j(x)) 
\end{equation}
and
\begin{equation}  \label{eqDefWn}
W_n(x) = \frac{1}{n} \sum\limits_{j=0}^{n-1} \delta_{f^j(x)}
\end{equation}
for $x\in X$ and $n\in\N_0$. Note that when $n=0$, by definition we always have $S_0 \varphi = 0$, and by convention $W_0=0$.

\section{Thurston maps} \label{sctThurstonMap}

This section serves as a minimal review for expanding Thurston maps. Most of the definitions and results here were discussed in \cite[Section~3]{Li14}. The reader is encouraged to read Section~3 in \cite{Li14} for a quick introduction to expanding Thurston maps and the terminology that we use in this paper. For a more thorough treatment of the subject, we refer to \cite{BM10}.

Let $S^2$ denote an oriented topological $2$-sphere. A continuous map $f\:S^2\rightarrow S^2$ is called a \defn{branched covering map} on $S^2$ if for each point $x\in S^2$, there exists a positive integer $d\in \N$, open neighborhoods $U$ of $x$ and $V$ of $y=f(x)$, open neighborhoods $U'$ and $V'$ of $0$ in $\widehat{\C}$, and orientation-preserving homeomorphisms $\varphi\:U\rightarrow U'$ and $\eta\:V\rightarrow V'$ such that $\varphi(x)=0$, $\eta(y)=0$, and
$$
(\eta\circ f\circ\varphi^{-1})(z)=z^d
$$
for each $z\in U'$. The positive integer $d$ above is called the \defn{local degree} of $f$ at $x$ and is denoted by $\deg_f (x)$. The \defn{degree} of $f$ is
\begin{equation}   \label{eqDeg=SumLocalDegree}
\deg f=\sum\limits_{x\in f^{-1}(y)} \deg_f (x)
\end{equation}
for $y\in S^2$ and is independent of $y$. If $f\:S^2\rightarrow S^2$ and $g\:S^2\rightarrow S^2$ are two branched covering maps on $S^2$, then so is $f\circ g$, and
\begin{equation} \label{eqLocalDegreeProduct}
 \deg_{f\circ g}(x) = \deg_g(x)\deg_f(g(x)), \qquad \text{for each } x\in S^2.
\end{equation}   

A point $x\in S^2$ is a \defn{critical point} of $f$ if $\deg_f(x) \geq 2$. The set of critical points of $f$ is denoted by $\crit f$. A point $y\in S^2$ is a \defn{postcritical point} of $f$ if $y = f^n(x)$ for some $x\in\crit f$ and $n\in\N$. The set of postcritical points of $f$ is denoted by $\post f$. Note that $\post f=\post f^n$ for all $n\in\N$.

\begin{definition} [Thurston maps] \label{defThurstonMap}
A Thurston map is a branched covering map $f\:S^2\rightarrow S^2$ on $S^2$ with $\deg f\geq 2$ and $\card(\post f)<+\infty$.
\end{definition}

Let $f\:S^2 \rightarrow S^2$ be a Thurston map, and $\CC\subseteq S^2$ be a Jordan curve containing $\post f$. Then the pair $f$ and $\CC$ induces natural \emph{cell decompositions} (see \cite[Definition~3.2]{Li14}) $\DD^n(f,\CC)$ of $S^2$, for $n\in\N_0$, such that
$$
\DD^n(f,\CC)=\X^n(f,\CC) \cup \E^n(f,\CC) \cup \overline\V^n(f,\CC)
$$
consisting of \emph{$n$-cells}, where the set $\X^n(f,\CC)$ consists of \emph{$n$-tiles}, the set $\E^n(f,\CC)$ consists of \emph{$n$-edges}, and $\overline\V^n(f,\CC) = \{ \{x\} \,|\, x\in \V^n(f,\CC)\}$ where the set $\V^n(f,\CC)$ consists of \emph{$n$-vertices}. The \defn{interior} of an $n$-cell is denoted by $\inte(c)$ (see the discussion preceding Definition~3.2 in \cite{Li14}). The \defn{$k$-skeleton}, for $k\in\{0,1,2\}$, of $\DD^n(f,\CC)$ is the union of all $n$-cells of dimension $k$ in this cell decomposition.

We record Proposition~6.1 of \cite{BM10} here in order to summarize properties of the cell decompositions $\DD^n(f,\CC)$ defined above.

\begin{prop}[M.~Bonk \& D.~Meyer, 2010] \label{propCellDecomp}
Let $k,n\in \N_0$, let   $f\: S^2\rightarrow S^2$ be a Thurston map,  $\CC\subseteq S^2$ be a Jordan curve with $\post f \subseteq \CC$, and   $m=\card(\post f)$. 
 
\smallskip
\begin{itemize}

\smallskip
\item[(i)] The map  $f^k$ is cellular for $(\DD^{n+k}(f,\CC), \DD^n(f,\CC))$. In particular, if  $c$ is any $(n+k)$-cell, then $f^k(c)$ is an $n$-cell, and $f^k|_c$ is a homeomorphism of $c$ onto $f^k(c)$.

\smallskip
\item[(ii)]  Let  $c$ be  an $n$-cell.  Then $f^{-k}(c)$ is equal to the union of all 
$(n+k)$-cells $c'$ with $f^k(c')=c$.

\smallskip
\item[(iii)] The $1$-skeleton of $\DD^n(f,\CC)$ is  equal to  $f^{-n}(\CC)$. The $0$-skeleton of $\DD^n(f,\CC)$ is the set $\V^n(f,\CC)=f^{-n}(\post f )$, and we have $\V^n(f,\CC) \subseteq \V^{n+k}(f,\CC)$. 

\smallskip
\item[(iv)] $\card(\X^n(f,\CC))=2(\deg f)^n$,  $\card(\E^n(f,\CC))=m(\deg f)^n$,  and $\card (\V^n(f,\CC)) \leq m (\deg f)^n$.

\smallskip
\item[(v)] The $n$-edges are precisely the closures of the connected components of $f^{-n}(\CC)\setminus f^{-n}(\post f )$. The $n$-tiles are precisely the closures of the connected components of $S^2\setminus f^{-n}(\CC)$.

\smallskip
\item[(vi)] Every $n$-tile  is an $m$-gon, i.e., the number of $n$-edges and the number of $n$-vertices contained in its boundary are equal to $m$.  

\end{itemize}
\end{prop}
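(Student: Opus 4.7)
The plan is induction on $n$, with the base case $n=0$ handled by the Jordan curve theorem: $\CC$ bounds two closed Jordan domains (the two $0$-tiles), and together with $\post f$ it decomposes $S^2$ into $\card\X^0 = 2$ tiles, $\card\E^0 = m$ edges (the arcs of $\CC \setminus \post f$), and $\card\V^0 = m$ vertices; each $0$-tile has $\CC$ as boundary, so is an $m$-gon.

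The engine for the induction is the observation that $\crit f \subseteq f^{-n-1}(\CC)$ for every $n \geq 0$. Indeed, $\post f$ is forward invariant under $f$ and contained in $\CC$, so $\post f \subseteq f^{-n}(\CC)$, and hence $\crit f \subseteq f^{-1}(\post f) \subseteq f^{-n-1}(\CC)$. Therefore the restriction $f \colon S^2 \setminus f^{-n-1}(\CC) \to S^2 \setminus f^{-n}(\CC)$ is an unramified covering of degree $\deg f$. Assume inductively that the interiors of the $n$-tiles are exactly the connected components of $S^2 \setminus f^{-n}(\CC)$ and each is a Jordan domain (hence simply connected). For each such component $U$, the covering $f^{-1}(U) \to U$ is trivial, splitting $f^{-1}(U)$ into $\deg f$ disjoint open sets each mapped homeomorphically onto $U$. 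Their closures are the $(n+1)$-tiles above the closure of $U$, yielding (i), (ii), and (v) for tiles and the recursion $\card\X^{n+1} = (\deg f)\card\X^n$. Running the parallel argument on the $1$-strata (components of $f^{-n}(\CC \setminus \post f)$), which are disjoint open arcs avoiding critical points of $f^n$, gives (v) and the count $\card\E^{n+1} = (\deg f)\card\E^n$ for edges. Statement (iii) is then a direct bookkeeping check: the $1$-skeleton of $\DD^{n+1}$ is the $f$-preimage of that of $\DD^n$, which is $f^{-n-1}(\CC)$; similarly $\V^{n+1} = f^{-n-1}(\post f)$. The bound $\card\V^n \leq m(\deg f)^n$ in (iv) follows from $\card f^{-1}(x) \leq \deg f$, with strict inequality precisely when a critical point lies over $x$. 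The cellular claim of (i) for general $k$ follows from the case $k=1$ by composition $f^k = f \circ f \circ \cdots \circ f$ and iterating the pullback.

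For (vi), apply (i) with $k = n$ to obtain a homeomorphism from each $n$-tile $X$ onto some $0$-tile, and use the cellular property together with (iii) to transfer the $m$-gon boundary structure bijectively; since the $0$-tile has $m$ boundary edges and $m$ boundary vertices, so does $X$.

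The main obstacle will be verifying that the closure of each component of $f^{-1}(U)$ is actually a closed Jordan domain, so that the inductive hypothesis propagates. Away from critical points this is automatic from local injectivity, but at a critical point $p$ of local degree $d$ lying on the boundary, the normal form $z \mapsto z^d$ of $f$ near $p$ shows that $d$ preimage edges meet cyclically at $p$ and $d$ preimage open tiles wrap around $p$ like sectors of a pie. A careful local gluing analysis using this normal form, combined with the fact that the $0$-tile containing $f(p)$ is a Jordan domain with $f(p)$ on its boundary, confirms that the closures of preimage components are themselves closed Jordan domains and that edges glue consistently at $p$. Once this local-to-global step is secured, all counting and homeomorphism assertions follow mechanically, and Proposition~\ref{propCellDecomp} is established.
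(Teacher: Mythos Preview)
The paper does not prove this proposition; it merely records it as Proposition~6.1 of \cite{BM10} for later reference. So there is no ``paper's own proof'' to compare against, and your sketch is to be judged on its own merits and against the original Bonk--Meyer argument.

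Your approach is essentially the standard one and is sound in outline. The inductive engine---that $\crit f \subseteq f^{-(n+1)}(\CC)$, so that $f$ restricts to an unramified degree-$(\deg f)$ covering over each open $n$-tile, which is simply connected and hence evenly covered---is exactly the right idea, and your derivation of the counts in (iv) and the cellular property in (i) from it is correct. You also correctly identify the one genuinely nontrivial step: verifying that the closure of each component of $f^{-1}(\inte X^n)$ is again a closed Jordan domain, which requires the local $z\mapsto z^d$ model at boundary critical points. This is precisely where Bonk and Meyer invest the real work (their Lemma~5.2 and surrounding discussion), and your description of the sector picture is the right mental model, though making it rigorous takes more care than your paragraph suggests---one must show the boundary of each lifted tile is a simple closed curve, not just that edges meet cyclically.

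One small point to tighten: when you write ``$\post f \subseteq f^{-n}(\CC)$ because $\post f$ is forward invariant and contained in $\CC$,'' the logic is that $f^n(\post f)\subseteq \post f \subseteq \CC$, hence $\post f\subseteq f^{-n}(\CC)$; this is correct but worth stating explicitly, since $\CC\subseteq f^{-n}(\CC)$ is \emph{not} assumed here (no invariance of $\CC$ is required for this proposition). Otherwise your sketch matches the Bonk--Meyer strategy and would, with the local Jordan-domain verification filled in, constitute a complete proof.
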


From now on, if the map $f$ and the Jordan curve $\CC$ are clear from the context, we will sometimes omit $(f,\CC)$ in the notation above.

If we fix the cell decomposition $\DD^n(f,\CC)$, $n\in\N_0$, we can define for each $v\in \V^n$ the \defn{$n$-flower of $v$} as
\begin{equation}   \label{defFlower}
W^n(v) = \bigcup  \{\inte (c) \,|\, c\in \DD^n,\, v\in c \}.
\end{equation}
Note that flowers are open (in the standard topology on $S^2$). Let $\overline{W}^n(v)$ be the closure of $W^n(v)$. We define the \defn{set of all $n$-flowers} by
\begin{equation}   \label{defSetNFlower}
\W^n = \{W^n(v) \,|\, v\in\V^n\}.
\end{equation}
\begin{rem}  \label{rmFlower}
For $n\in\N_0$ and $v\in\V^n$, we have 
$$
\overline{W}^n(v)=X_1\cup X_2\cup \cdots \cup X_m,
$$
where $m=2\deg_{f^n}(v)$, and $X_1, X_2, \dots X_m$ are all the $n$-tiles that contains $v$ as a vertex (see \cite[Lemma~7.2]{BM10}). Moreover, each flower is mapped under $f$ to another flower in such a way that is similar to the map $z\mapsto z^k$ on the complex plane. More precisely, for $n\in\N_0$ and $v\in \V^{n+1}$, there exists orientation preserving homeomorphisms $\varphi\: W^{n+1}(v) \rightarrow D$ and $\eta\: W^{n}(f(v)) \rightarrow D$ such that $D$ is the unit disk on $\C$, $\varphi(v)=0$, $\eta(f(v))=0$, and 
$$
(\eta\circ f \circ \varphi^{-1}) (z) = z^k
$$
for all $z\in D$, where $k=\deg_f(v)$. Let $\overline{W}^{n+1}(v)= X_1\cup X_2\cup \cdots \cup X_m$ and $\overline{W}^n(f(v))= X'_1\cup X'_2\cup \cdots \cup X'_{m'}$, where $X_1, X_2, \dots X_m$ are all the $(n+1)$-tiles that contains $v$ as a vertex, listed counterclockwise, and $X'_1, X'_2, \dots X'_{m'}$ are all the $n$-tiles that contains $f(v)$ as a vertex, listed counterclockwise, and $f(X_1)=X'_1$. Then $m= m'k$, and $f(X_i)=X'_j$ if $i\equiv j \pmod{k}$, where $k=\deg_f(v)$. (See also Case~3 of the proof of Lemma~5.2 in \cite{BM10} for more details.)
\end{rem}

\begin{definition} [Expansion] \label{defExpanding}
A Thurston map $f\:S^2\rightarrow S^2$ is called \defn{expanding} if there exist a metric $d$ on $S^2$ that induces the standard topology on $S^2$ and a Jordan curve $\CC\subseteq S^2$ containing $\post f$ such that 
\begin{equation*}
\lim\limits_{n\to+\infty}\max \{\diam_d(X) \,|\, X\in \X^n(f,\CC)\}=0.
\end{equation*}
\end{definition}

It is clear that if $f\: S^2\rightarrow S^2$ is an expanding Thurston map, then so is $f^n\: S^2\rightarrow S^2$, for $n\in \N$.

For an expanding Thurston map $f$, we can fix a particular metric $d$ on $S^2$ called a \emph{visual metric for $f$} . For the existence and properties of such metrics, see \cite[Chapter~8]{BM10}. For a fixed expanding Thurston map, each visual metric corresponds to a unique \emph{expansion factor} $\Lambda >1$. One major advantage of a visual metric $d$ is that in $(S^2,d)$ we have good quantitative control over the sizes of the cells in the cell decompositions discussed above (see \cite[Lemma~8.10]{BM10}).

\begin{lemma}[M.~Bonk \& D.~Meyer, 2010]   \label{lmCellBoundsBM}
Let $f\:S^2 \rightarrow S^2$ be an expanding Thurston map, and $\CC \subseteq S^2$ be a Jordan curve containing $\post f$. Let $d$ be a visual metric on $S^2$ for $f$ with expansion factor $\Lambda>1$. Then there exists a constant $C\geq 1$ such that for all $n$-edges and all $n$-tiles $\tau$ with $n\in\N_0$, we have $C^{-1} \Lambda^{-n} \leq \diam_d(\tau) \leq C\Lambda^{-n}$.
\end{lemma}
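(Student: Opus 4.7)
My plan is to reduce the two-sided diameter bound to a combinatorial estimate on the cell decompositions $\DD^n(f,\CC)$, using the defining feature of a visual metric. Recall that any visual metric $d$ for $f$ with expansion factor $\Lambda>1$ comes with a constant $K\geq 1$ and an integer-valued separation function $m\: S^2\times S^2 \rightarrow \N_0 \cup\{+\infty\}$ — informally, $m(x,y)$ records the largest level at which $x$ and $y$ can still be joined by a short chain of tiles — satisfying
$$
K^{-1}\Lambda^{-m(x,y)} \,\leq\, d(x,y) \,\leq\, K\Lambda^{-m(x,y)}
$$
for all $x,y\in S^2$. Both halves of the lemma will follow by controlling $m(x,y)$ in terms of $n$ whenever $x$ and $y$ both lie in a common $n$-cell $\tau$.

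For the upper bound, suppose $x,y\in \tau$ for an $n$-tile, or else for an $n$-edge $\tau$ which is then contained in two adjacent $n$-tiles. In either case any $n$-tile through $x$ and any $n$-tile through $y$ meets $\tau$ and hence shares an $n$-vertex with it. A short unwinding of the definition of $m(x,y)$ yields $m(x,y)\geq n-K_0$ for some universal $K_0$, and the visual-metric inequality then gives $d(x,y)\leq K\Lambda^{K_0}\Lambda^{-n}$, uniformly in $x,y\in\tau$. Taking the supremum over $x,y\in\tau$ produces the upper bound with $C_1 := K\Lambda^{K_0}$.

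For the lower bound, I would choose two points $p,q$ in the $0$-cell $f^n(\tau)$ at a definite distance $\delta_0>0$ apart — this is possible because there are only finitely many $0$-cells in $\DD^0(f,\CC)$, each with positive diameter (one can, for instance, take two $0$-vertices of $f^n(\tau)$) — and pull them back through the cellular homeomorphism $f^n|_\tau$ provided by Proposition~\ref{propCellDecomp}(i) to obtain points $x,y\in\tau$. The key claim is that $x$ and $y$ lie in no common $(n+K_1)$-tile for some universal $K_1$ depending only on $f$, $\CC$, and $\delta_0$; via Proposition~\ref{propCellDecomp}(i) this reduces through $f^n$ to a finite geometric fact about the $K_1$-th refinement of $\DD^0(f,\CC)$ restricted to the $0$-cell $f^n(\tau)$, namely that tiles at sufficiently high level have small enough $d$-diameter to separate $p$ from $q$. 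Granting this, $m(x,y)\leq n+K_1$ and $d(x,y)\geq K^{-1}\Lambda^{-n-K_1}$. Setting $C=\max\{C_1,\,K\Lambda^{K_1}\}$ completes the proof. The main obstacle is establishing the universal upper bound on $K_1$ in the lower-bound argument, which is the only place where one really exploits the interaction between $f^n$ and the nested cell decompositions; the upper bound is essentially immediate once the visual-metric comparison is in hand.
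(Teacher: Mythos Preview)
The paper does not prove this lemma; it is stated as \cite[Lemma~8.10]{BM10} and used as a black box, so there is no ``paper's own proof'' to compare against. Your plan is essentially the argument Bonk and Meyer give: the visual metric satisfies $d(x,y)\asymp\Lambda^{-m(x,y)}$ by construction, and both diameter bounds reduce to controlling $m(x,y)$ combinatorially when $x,y$ lie in a common $n$-cell.

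One point worth tightening: you write that a visual metric ``comes with'' a separation function $m$, but in \cite{BM10} the function $m=m_{f,\CC'}$ is attached to a specific Jordan curve $\CC'$, which need not coincide with the curve $\CC$ in the statement. If you set $m=m_{f,\CC}$ (same curve as in the lemma), your two combinatorial claims --- $m(x,y)\geq n$ for $x,y$ in a common $n$-tile, and the existence of $x,y\in\tau$ with $m(x,y)\leq n+K_1$ --- are essentially immediate from the definition of $m_{f,\CC}$, with $K_0=0$ in the first case. But then you owe the fact that $d(x,y)\asymp\Lambda^{-m_{f,\CC}(x,y)}$ holds for \emph{this} $\CC$ too, i.e., that $|m_{f,\CC}-m_{f,\CC'}|$ is uniformly bounded. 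This is proved in \cite{BM10} and is routine, so the gap is cosmetic rather than structural; your outline is sound.
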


In addition, we will need the fact that a visual metric $d$ induces the standard topology on $S^2$ (\cite[Proposition~8.9]{BM10}) and the fact that the metric space $(S^2,d)$ is \emph{linearly locally connected} (\cite[Proposition~16.3]{BM10}).

A Jordan curve $\CC\subseteq S^2$ is \defn{$f$-invariant} if $f(\CC)\subseteq \CC$. For each $f$-invariant Jordan curve $\CC\subseteq S^2$ containing $\post f$, the partition $(\DD^1,\DD^0)$ is a \emph{cellular Markov partition} for $f$ (see \cite[Definition~3.4]{Li14}). M.~Bonk and D.~Meyer \cite[Theorem~1.2]{BM10} proved that there exists an $f^n$-invariant Jordan curve $\CC\subseteq S^2$ containing $\post{f}$ for each sufficiently large $n$ depending on $f$. We proved a slightly stronger version of this result in \cite[Lemma~3.12]{Li13} which we record in the following lemma.

\begin{lemma}  \label{lmCexistsL}
Let $f\:S^2\rightarrow S^2$ be an expanding Thurston map, and $\widetilde{\CC}\subseteq S^2$ be a Jordan curve with $\post f\subseteq \widetilde{\CC}$. Then there exists an integer $N(f,\widetilde{\CC}) \in \N$ such that for each $n\geq N(f,\widetilde{\CC})$ there exists an $f^n$-invariant Jordan curve $\CC$ isotopic to $\widetilde{\CC}$ rel.\ $\post f$ such that no $n$-tile in $\DD^n(f,\CC)$ joins opposite sides of $\CC$.
\end{lemma}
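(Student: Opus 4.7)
The plan is to refine Bonk--Meyer's construction in \cite[Theorem~1.2]{BM10} by extracting quantitative metric information about the resulting $f^n$-invariant Jordan curve, using the visual metric and the expansion property. The argument splits naturally into three steps.

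First, fix a visual metric $d$ on $S^2$ for $f$ with expansion factor $\Lambda>1$. Since $f$ is a Thurston map of degree $\geq 2$, we have $\card(\post f)\geq 3$, so the Jordan curve $\widetilde{\CC}$ has at least three sides (the closed arcs between consecutive points of $\post f$), and any two opposite sides of $\widetilde{\CC}$ are disjoint compact subsets of $S^2$. Let $\delta_0>0$ be the minimum of the pairwise distances between opposite sides of $\widetilde{\CC}$.

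Second, I would apply Bonk--Meyer's existence construction to obtain, for each sufficiently large $n$, an $f^n$-invariant Jordan curve $\CC$ isotopic to $\widetilde{\CC}$ rel. $\post f$. The key refinement is to ensure $\CC$ is Hausdorff-close to $\widetilde{\CC}$: their construction produces $\CC$ from an edge path in $f^{-k}(\widetilde{\CC})$ for some auxiliary large $k$, and by Lemma~\ref{lmCellBoundsBM} each such edge has $d$-diameter at most $C\Lambda^{-k}$. Taking $n$ (and hence $k$) large enough forces $\CC$ into the $\delta_0/4$-neighborhood of $\widetilde{\CC}$. Since the isotopy is rel. $\post f$, the endpoints of the sides of $\CC$ coincide with those of $\widetilde{\CC}$, so opposite sides of $\CC$ correspond to opposite sides of $\widetilde{\CC}$ and are separated by distance at least $\delta_0/2$.

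Third, I would apply Lemma~\ref{lmCellBoundsBM} again, now to the cell decomposition $\DD^n(f,\CC)$: there is a constant $C'\geq 1$ with $\diam_d(X)\leq C'\Lambda^{-n}$ for every $n$-tile $X\in\X^n(f,\CC)$. By enlarging $n$ once more, we may arrange $C'\Lambda^{-n}<\delta_0/2$, so no $n$-tile can simultaneously meet two opposite sides of $\CC$. Setting $N(f,\widetilde{\CC})$ to be the maximum of the indices required in Steps~2 and~3 completes the proof.

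The main technical obstacle is Step~2: extracting from the Bonk--Meyer iterative edge-replacement argument the quantitative control needed to place $\CC$ inside a prescribed neighborhood of $\widetilde{\CC}$, and verifying that the resulting curve is still $f^n$-invariant. This reduces to tracking, at each stage of their inductive construction, how far the lifted edges can stray from the original arcs, with the crucial bound again supplied by Lemma~\ref{lmCellBoundsBM}. Once Step~2 is established, Steps~1 and~3 are routine applications of the expansion property.
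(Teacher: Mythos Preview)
The paper does not prove this lemma; it simply records it as \cite[Lemma~3.12]{Li13}, a refinement of \cite[Theorem~1.2]{BM10}. Your overall strategy---build on Bonk--Meyer's existence result and then use visual-metric diameter bounds to rule out tiles joining opposite sides---is the natural one and is essentially how the argument in \cite{Li13} goes.

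There is, however, a circularity in Step~3 that you should address. Lemma~\ref{lmCellBoundsBM} produces a constant $C'$ depending on $f$, $d$, \emph{and} the Jordan curve $\CC$. In your argument $\CC=\CC_n$ is produced by Step~2 and depends on $n$, so a priori $C'=C'(n)$ may grow with $n$, and ``enlarge $n$ until $C'\Lambda^{-n}<\delta_0/2$'' is not justified as stated. You need either a uniform bound on $C'$ over all curves in a fixed Hausdorff neighborhood of $\widetilde{\CC}$, or---more in the spirit of the actual proof---to relate the $n$-tiles for $\CC_n$ directly to cells of $\DD^k(f,\widetilde{\CC})$ for the \emph{fixed} curve $\widetilde{\CC}$, which is accessible precisely because Bonk--Meyer build $\CC_n$ out of such edges. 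A smaller gap in Step~2: Hausdorff closeness of $\CC$ to $\widetilde{\CC}$ together with the isotopy being rel.\ $\post f$ does not by itself force each $0$-edge of $\CC$ to lie near the \emph{corresponding} $0$-edge of $\widetilde{\CC}$; you need (and can extract from the edge-path construction) the stronger side-by-side closeness. Finally, when $\card(\post f)=3$ there are no disjoint pairs of $0$-edges (Definition~\ref{defConnectop}), so your $\delta_0$ needs a separate formulation in that case.
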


\begin{definition}[Joining opposite sides]  \label{defConnectop} 
Fix a Thurston map $f$ with $\card(\post f) \geq 3$ and an $f$-invariant Jordan curve $\CC$ containing $\post f$.  A set $K\subseteq S^2$ \defn{joins opposite sides} of $\CC$ if $K$ meets two disjoint $0$-edges when $\card( \post f)\geq 4$, or $K$ meets  all  three $0$-edges when $\card(\post f)=3$. 
 \end{definition}
 
Note that $\card (\post f) \geq 3$ for each expanding Thurston map $f$ \cite[Corollary~6.4]{BM10}.

We proved in \cite[Lemma~3.14]{Li13} the following easy lemma. 

\begin{lemma}   \label{lmPreImageDense}
Let $f\:S^2\rightarrow S^2$ be an expanding Thurston map. Then for each $p\in S^2$, the set $\bigcup\limits_{n=1}^{+\infty}  f^{-n}(p)$ is dense in $S^2$, and
\begin{equation}
\lim\limits_{n\to +\infty}  \card(f^{-n}(p))  = +\infty.
\end{equation}
\end{lemma}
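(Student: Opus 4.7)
The plan is to establish the two assertions separately, using the cell decompositions $\DD^n(f, \CC)$ associated to some Jordan curve $\CC \supseteq \post f$ and the diameter estimate $\diam_d(X) \leq C\Lambda^{-n}$ for $n$-tiles from Lemma~\ref{lmCellBoundsBM}, where $d$ is a visual metric for $f$ with expansion factor $\Lambda > 1$.

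\textbf{Density.} Fix $x \in S^2$ and $\epsilon > 0$, and choose $n$ large enough that $2C\Lambda^{-n} < \epsilon$. The point $p$ lies in some $0$-tile $X_0$, and by Proposition~\ref{propCellDecomp}(i) each $n$-tile $X$ with $f^n(X) = X_0$ contains the preimage $(f^n|_X)^{-1}(p) \in f^{-n}(p)$. Let $X''$ be an $n$-tile containing $x$. If $f^n(X'') = X_0$, this tile already contains a preimage of $p$ within distance $C\Lambda^{-n}$ of $x$. Otherwise, I pass to an $n$-tile $X'$ sharing an $n$-edge $e$ with $X''$: since $f$ is orientation-preserving, $n$-tiles on opposite sides of $e$ map to the two different $0$-tiles adjacent to the $0$-edge $f^n(e) \subseteq \CC$, so $f^n(X') = X_0$. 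Then $\diam_d(X'' \cup X') \leq 2C\Lambda^{-n} < \epsilon$ yields a preimage of $p$ in $B_d(x, \epsilon)$.

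\textbf{Cardinality.} Set $A_n = f^{-n}(p)$. Since $f$ is surjective, $|A_{n+1}| = \sum_{y \in A_n} |f^{-1}(y)| \geq |A_n|$, so the sequence $\{|A_n|\}$ is non-decreasing. Assume for contradiction that it is bounded, stabilizing at some value $M$ for $n \geq N$. Then $|f^{-1}(y)| = 1$ for every $y \in A_n$ with $n \geq N$; the unique preimage must therefore have local degree equal to $\deg f$, so $A_n \subseteq \{c \in \crit f \,|\, \deg_f(c) = \deg f\}$ for all $n \geq N+1$. The Riemann--Hurwitz identity $\sum_{c \in \crit f}(\deg_f(c) - 1) = 2\deg f - 2$ forces this set to have at most two elements, giving $M \in \{1, 2\}$.

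The main obstacle will be eliminating these two remaining cases, and here the hypothesis of expansion enters decisively. If $M = 2$, then Riemann--Hurwitz is saturated, so $\crit f$ equals this two-element set $\{y_1, y_2\}$; since the bijection $f \: A_{n+1} \to A_n$ shows that $f$ permutes $\{y_1, y_2\}$, one gets $\post f = \{y_1, y_2\}$, contradicting the inequality $\card(\post f) \geq 3$ recorded above Definition~\ref{defConnectop}. If $M = 1$, then eventually $A_n = \{y\}$ with $y$ a fixed critical point satisfying $\deg_f(y) = \deg f$, whence $\deg_{f^n}(y) = (\deg f)^n$ for every $n \in \N$. By Remark~\ref{rmFlower} the closed flower $\overline{W}^n(y)$ is the union of $2(\deg f)^n$ distinct $n$-tiles, which by Proposition~\ref{propCellDecomp}(iv) accounts for every $n$-tile, so $\overline{W}^n(y) = S^2$. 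But each of these $n$-tiles contains $y$ and has diameter at most $C\Lambda^{-n}$, forcing $\diam_d(S^2) \leq 2C\Lambda^{-n} \to 0$, the sought contradiction.
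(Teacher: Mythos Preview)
Your proof is essentially correct and self-contained, whereas the paper does not prove this lemma at all---it simply records it as \cite[Lemma~3.14]{Li13}. So there is nothing to compare against here; your argument supplies what the paper omits.

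One small point in the cardinality argument deserves an extra sentence. In the case $M=1$ you assert that ``eventually $A_n=\{y\}$ with $y$ a fixed critical point''. This is true, but not quite immediate: the backward orbit $\{z_n\}_{n\geq N+1}$ lives in the set $\{c\in\crit f : \deg_f(c)=\deg f\}$, which may have two elements. Since each $z_n$ has a \emph{unique} preimage $z_{n+1}$, the sequence is eventually periodic in this finite set, with period $1$ or $2$. Period $2$ forces $f$ to swap the two maximal-degree critical points, again giving $\crit f=\post f$ of cardinality $2$ and the same contradiction as in your $M=2$ case. Only then are you left with period $1$, i.e., a fixed $y$, and your flower argument applies. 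This is routine, but as written the reader might pause.

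The density argument is clean; the alternating-colour observation (adjacent $n$-tiles map to different $0$-tiles) is exactly the right tool.
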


Expanding Thurston maps are Lipschitz with respect to a visual metric \cite[Lemma~3.12]{Li14}.

\begin{lemma}    \label{lmLipschitz}
Let $f\:S^2 \rightarrow S^2$ be an expanding Thurston map, and $d$ be a visual metric on $S^2$ for $f$. Then $f$ is Lipschitz with respect to $d$.
\end{lemma}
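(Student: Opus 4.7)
The plan is to exploit the tight geometric control of the visual metric by the combinatorics of the cell decompositions $\DD^n(f,\CC)$, together with cellularity of $f$ (Proposition~\ref{propCellDecomp}(i)). Fix any Jordan curve $\CC \subseteq S^2$ with $\post f \subseteq \CC$, let $\Lambda>1$ be the expansion factor of $d$, and let $C \geq 1$ be the constant from Lemma~\ref{lmCellBoundsBM}. Since $(S^2,d)$ is compact with finite diameter, it suffices to produce constants $\delta, L>0$ such that $d(f(x),f(y))\leq L\,d(x,y)$ whenever $d(x,y)\leq\delta$; pairs with $d(x,y)\geq\delta$ are handled automatically by $d(f(x),f(y))\leq \diam_d(S^2) \leq (\diam_d(S^2)/\delta)\,d(x,y)$.

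The crux is a ``near-visual'' lemma dual to Lemma~\ref{lmCellBoundsBM}: there exists $K>0$ such that for every $n\in\N_0$ and every $x,y\in S^2$ with $d(x,y)\leq K\Lambda^{-n}$, one can find $n$-tiles $X,Y\in\X^n(f,\CC)$ with $x\in X$, $y\in Y$, and $X\cap Y\neq\emptyset$. This is a standard feature of visual metrics for expanding Thurston maps established in \cite[Chapter~8]{BM10}; it can be obtained by connecting $x$ to $y$ by a continuum of diameter $\lesssim d(x,y)$ using the linear local connectedness of $(S^2,d)$ (Proposition~16.3 of \cite{BM10}) and arguing that, for $K$ small enough, such a continuum cannot meet two $n$-tiles whose closures are disjoint, since every vertex is shared by only a bounded number of tiles. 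Once this lemma is granted, the rest is quick: given $x\neq y$ with $d(x,y)\leq K\Lambda^{-1}$, choose the largest $n\in\N$ with $K\Lambda^{-n}\geq d(x,y)$, so that $d(x,y)>K\Lambda^{-(n+1)}$. The near-visual lemma provides adjacent $n$-tiles $X,Y$ with $x\in X$, $y\in Y$, and $X\cap Y\neq\emptyset$. By Proposition~\ref{propCellDecomp}(i), $f(X),f(Y)\in\X^{n-1}(f,\CC)$ and $f(X)\cap f(Y)\supseteq f(X\cap Y)\neq\emptyset$, so $f(x)$ and $f(y)$ lie in the union of two $(n-1)$-tiles sharing a point. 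Lemma~\ref{lmCellBoundsBM} then gives
$$
d(f(x),f(y))\leq 2C\Lambda^{-(n-1)} = 2C\Lambda\cdot\Lambda^{-n} \leq \frac{2C\Lambda^2}{K}\,d(x,y),
$$
and setting $\delta=K\Lambda^{-1}$, $L=\max\{2C\Lambda^2/K,\,\diam_d(S^2)/\delta\}$ completes the proof.

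The main obstacle is the near-visual lemma in the second paragraph, i.e., showing that two $n$-tiles whose closures are disjoint are separated by distance $\gtrsim \Lambda^{-n}$. This is the one nontrivial input and is not stated explicitly in the excerpt; however it is intrinsic to the notion of visual metric and can be quoted from \cite[Chapter~8]{BM10}. Everything else—the cellularity of $f$, the existence of a Jordan curve through $\post f$, and the diameter control of cells—has already been recorded in the preceding section.
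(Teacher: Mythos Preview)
The paper does not actually prove this lemma here; it merely records it and cites \cite[Lemma~3.12]{Li14}. So there is no in-paper argument to compare against, and the question is simply whether your proof stands on its own.

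It does. Your reduction to the ``near-visual'' lemma is exactly the right idea, and that lemma is not merely quotable from \cite{BM10} but is an immediate consequence of the very definition of a visual metric. Recall that for a visual metric one has a two-sided comparison
\[
\frac{1}{C'}\,\Lambda^{-m_{f,\CC}(x,y)} \leq d(x,y) \leq C'\,\Lambda^{-m_{f,\CC}(x,y)},
\]
where $m_{f,\CC}(x,y)$ is the largest $n$ for which $x$ and $y$ lie in $n$-tiles that meet (this is \cite[Section~8]{BM10}, and the lower diameter bound in Lemma~\ref{lmCellBoundsBM} already reflects part of it). Taking $K=1/C'$, the inequality $d(x,y)\leq K\Lambda^{-n}$ forces $\Lambda^{-m_{f,\CC}(x,y)}\leq \Lambda^{-n}$, hence $m_{f,\CC}(x,y)\geq n$, which is precisely your near-visual lemma. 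In particular you do not need linear local connectedness or any argument about continua; the statement is baked into what ``visual metric'' means. With that in hand, your application of Proposition~\ref{propCellDecomp}(i) and Lemma~\ref{lmCellBoundsBM} is clean and correct.
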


We established the following generalization of \cite[Lemma~16.1]{BM10} in \cite[Lemma~3.13]{Li14}.

\begin{lemma}  \label{lmMetricDistortion}
Let $f\:S^2 \rightarrow S^2$ be an expanding Thurston map, and $\CC \subseteq S^2$ be a Jordan curve that satisfies $\post f \subseteq \CC$ and $f^{n_\CC}(\CC)\subseteq\CC$ for some $n_\CC\in\N$. Let $d$ be a visual metric on $S^2$ for $f$ with expansion factor $\Lambda>1$. Then there exists a constant $C_0 > 1$, depending only on $f$, $d$, $\CC$, and $n_\CC$, with the following property:

If $k,n\in\N_0$, $X^{n+k}\in\X^{n+k}(f,\CC)$, and $x,y\in X^{n+k}$, then 
\begin{equation}   \label{eqMetricDistortion}
\frac{1}{C_0} d(x,y) \leq \frac{d(f^n(x),f^n(y))}{\Lambda^n}  \leq C_0 d(x,y).
\end{equation}
\end{lemma}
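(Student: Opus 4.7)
The plan is to reduce the estimate to the case when $\CC$ is $f$-invariant (i.e., $n_\CC = 1$), and to prove that special case by combining the cellular Markov structure from Proposition~\ref{propCellDecomp} with the combinatorial characterization of visual metrics; this generalizes \cite[Lemma~16.1]{BM10}. For the $f$-invariant case, the central observation is that $f^n$ restricts to a cellular homeomorphism $f^n|_{X^{n+k}} \colon X^{n+k} \to X^k := f^n(X^{n+k})$ which sends $(n+k+j)$-cells contained in $X^{n+k}$ bijectively onto $(k+j)$-cells contained in $X^k$ for every $j \in \N_0$ (Proposition~\ref{propCellDecomp}(i),(ii)). I will use the combinatorial characterization of the visual metric $d$ via the gauge $m(u,v)$, defined as the largest $m \in \N_0$ for which there exist $Y_1, Y_2 \in \X^m(f,\CC)$ with $u \in Y_1$, $v \in Y_2$, and $Y_1 \cap Y_2 \neq \emptyset$: for some constant $K \geq 1$,
$$K^{-1} \Lambda^{-m(u,v)} \leq d(u,v) \leq K \Lambda^{-m(u,v)} \quad \text{for all } u, v \in S^2.$$
For $x, y \in X^{n+k}$, the forward map $f^n$ sends tiles realizing $m(x,y)$ to tiles of level $m(x,y) - n$ that meet and contain $f^n(x), f^n(y)$; conversely, $(f^n|_{X^{n+k}})^{-1}$ lifts tiles inside $X^k$ realizing $m(f^n(x), f^n(y))$ back to $X^{n+k}$ with level raised by exactly $n$. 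This yields $m(x,y) = n + m(f^n(x), f^n(y))$ up to a bounded additive error, which converts through $d \asymp \Lambda^{-m}$ into the desired multiplicative distortion bound.

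To reduce the general case to the $f$-invariant case, set $F := f^{n_\CC}$. Then $F$ is an expanding Thurston map with $F$-invariant Jordan curve $\CC$, the metric $d$ is a visual metric for $F$ with expansion factor $\Lambda^{n_\CC}$, and $\DD^j(F,\CC) = \DD^{j n_\CC}(f,\CC)$ for every $j \in \N_0$. Applying the first step to $F$ yields the distortion estimate whenever both the iteration count and the tile level are multiples of $n_\CC$. For arbitrary $n$ and $k$, write $n = q n_\CC + r$ with $0 \leq r < n_\CC$. For the upper inequality, apply the $F$-bound for $F^q$ on an $f$-tile of level a multiple of $n_\CC$ containing $X^{n+k}$, then absorb the remaining $r$ iterates of $f$ via Lipschitz continuity (Lemma~\ref{lmLipschitz}). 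For the lower inequality, apply the $F$-bound for $F^{q+1}$ and use Lipschitz continuity of the extra $n_\CC - r$ iterates in the reverse direction; the exceptional case where $n + k < (q+1) n_\CC$ involves only finitely many bounded parameters and is handled by a direct compactness argument. Since all accumulated losses are bounded by constants depending only on $f$, $d$, $\CC$, and $n_\CC$, they fold into the single constant $C_0$.

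The principal obstacle is the uniform control of boundary effects in the first step: tiles realizing $m(x,y)$ need not lie inside $X^{n+k}$ when $x$ or $y$ lies on $\partial X^{n+k} \subseteq f^{-(n+k)}(\CC)$. The resolution uses the flower structure at each vertex (Remark~\ref{rmFlower}) together with the uniform bound on the number of tiles adjacent to any vertex (coming from Proposition~\ref{propCellDecomp}(iv) applied locally); the resulting perturbation of the combinatorial gauge is additive and uniform, which is precisely what allows its absorption into the multiplicative constant $C_0$.
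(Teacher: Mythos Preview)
The paper does not prove this lemma here; it records the statement and cites \cite[Lemma~3.13]{Li14} for the proof. So there is no in-paper argument to compare against, and I evaluate your outline on its own merits.

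Your Step~1 (the $f$-invariant case via the gauge $m(\cdot,\cdot)$) is the standard route to \cite[Lemma~16.1]{BM10}, and it works. One remark: your resolution of the boundary obstacle appeals to a ``uniform bound on the number of tiles adjacent to any vertex,'' but Proposition~\ref{propCellDecomp}(iv) gives only global tile counts, and with periodic critical points the local count $2\deg_{f^n}(v)$ is unbounded in $n$. The clean way around this is not to \emph{lift} tiles realizing $m(f^n(x),f^n(y))$, but to \emph{push forward}: take $(M{+}1)$-tiles $T_1,T_2\subseteq X^{n+k}$ containing $x,y$ (these exist by refinement since $M{+}1>n{+}k$); since $m(x,y)=M$, any such pair is disjoint, hence so are $f^n(T_1),f^n(T_2)$, and the visual-metric separation estimate for disjoint cells (part of \cite[Lemma~8.10]{BM10}) gives $d(f^n(x),f^n(y))\geq C^{-1}\Lambda^{-(M+1-n)}$ directly. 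No vertex-degree bound is needed.

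Your Step~2 (reduction to $F=f^{n_\CC}$) has a genuine gap. You write ``apply the $F$-bound for $F^q$ on an $f$-tile of level a multiple of $n_\CC$ containing $X^{n+k}$,'' but such a tile need not exist: for levels $m<m'$, the $m'$-tiles refine the $m$-tiles only when $f^{m'-m}(\CC)\subseteq\CC$, i.e., when $n_\CC\mid(m'-m)$. In particular $X^{n+k}$ is generally not contained in any $F$-tile. The simplest fix is to avoid the reduction altogether and argue directly: in the push-forward argument above, replace level $M{+}1$ by level $M{+}s$ with $s\in\{1,\dots,n_\CC\}$ chosen so that $n_\CC\mid(M{+}s-(n{+}k))$. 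Then $(M{+}s)$-tiles do refine $(n{+}k)$-tiles, one can choose $T_1,T_2\subseteq X^{n+k}$, and the same argument yields the lower bound with an extra factor $\Lambda^{n_\CC}$ absorbed into $C_0$. The upper inequality needs no invariance at all, since Proposition~\ref{propCellDecomp}(i) already sends $M$-tiles to $(M{-}n)$-tiles for arbitrary $\CC\supseteq\post f$.
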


\section{The Assumptions}      \label{sctAssumptions}
We state below the hypothesis under which we will develop our theory in most parts of this paper. We will repeatedly refer to such assumptions in the later sections.

\begin{assumptions}
\quad

\begin{enumerate}

\smallskip

\item $f\:S^2 \rightarrow S^2$ is an expanding Thurston map.

\smallskip

\item $\CC\subseteq S^2$ is a Jordan curve containing $\post f$ with the property that there exists $n_\CC\in\N$ such that $f^{n_\CC} (\CC)\subseteq \CC$ and $f^m(\CC)\nsubseteq \CC$ for each $m\in\{1,2,\dots,n_\CC-1\}$.

\smallskip

\item $d$ is a visual metric on $S^2$ for $f$ with expansion factor $\Lambda>1$ and a linear local connectivity constant $L\geq 1$.

\smallskip

\item $\phi\in \Holder{\alpha}(S^2,d)$ is a real-valued H\"{o}lder continuous function with an exponent $\alpha\in(0,1]$.

\end{enumerate}

\end{assumptions}

Observe that by Lemma~\ref{lmCexistsL}, for each $f$ in (1), there exists at least one Jordan curve $\CC$ that satisfies (2). Since for a fixed $f$, the number $n_\CC$ is uniquely determined by $\CC$ in (2), in the remaining part of the paper we will say that a quantity depends on $\CC$ even if it also depends on $n_\CC$.

Recall that the expansion factor $\Lambda$ of a visual metric $d$ on $S^2$ for $f$ is uniquely determined by $d$ and $f$. We will say that a quantity depends on $f$ and $d$ if it depends on $\Lambda$.

Note that even though the value of $L$ is not uniquely determined by the metric $d$, in the remainder of this paper, for each visual metric $d$ on $S^2$ for $f$, we will fix a choice of linear local connectivity constant $L$. We will say that a quantity depends on the visual metric $d$ without mentioning the dependence on $L$, even though if we had not fixed a choice of $L$, it would have depended on $L$ as well.

In the discussion below, depending on the conditions we will need, we will sometimes say ``Let $f$, $\CC$, $d$, $\phi$, $\alpha$ satisfy the Assumptions.'', and sometimes say ``Let $f$ and $d$ satisfy the Assumptions.'', etc.

\section{Asymptotic $h$-Expansiveness}  \label{sctAsymHExp}

\subsection{Basic concepts}  \label{subsctAsymHExpConcepts}
We first review some concepts from dynamical systems. We refer the reader to \cite[Chapter~3]{PU10}, \cite[Chapter~9]{Wa82} or \cite[Chapter~20]{KH95} for more detailed studies of these concepts.

Let $(X,d)$ be a compact metric space and $g\:X\rightarrow X$ a continuous map. 

A \defn{cover} of $X$ is a collection $\xi=\{A_j \,|\, j\in J\}$ of subsets of $X$ with the property that $\bigcup\xi = X$, where $J$ is an index set. The cover $\xi$ is an \defn{open cover} if $A_j$ is an open set for each $j\in J$. The cover $\xi$ is \defn{finite} if the index set $J$ is a finite set.

A \defn{measurable partition} $\xi$ of $X$ is a cover $\xi=\{A_j\,|\,j\in J\}$ of $X$ consisting of countably many mutually disjoint Borel sets $A_j$, $j\in J$, where $J$ is a countable index set. 

Let $\xi=\{A_j\,|\,j\in J\}$ and $\eta=\{B_k\,|\,k\in K\}$ be two covers of $X$, where $J$ and $K$ are the corresponding index sets. We say $\xi$ is a \defn{refinement} of $\eta$ if for each $A_j\in\xi$, there exists $B_k\in\eta$ such that $A_j\subseteq B_k$. The \defn{common refinement} $\xi \vee \eta$ of $\xi$ and $\eta$ defined as
$$
\xi \vee \eta = \{A_j\cap B_k \,|\, j\in J,\, k\in K\}
$$
is also a cover. Note that if $\xi$ and $\eta$ are both open covers (resp., measurable partitions), then $\xi \vee \eta$ is also an open cover (resp., a measurable partition). Define $g^{-1}(\xi)=\{g^{-1}(A_j) \,|\,j\in J\}$, and denote for $n\in\N$,
$$
\xi^n_g= \bigvee\limits_{j=0}^{n-1} g^{-j}(\xi) = \xi\vee g^{-1}(\xi)\vee\cdots\vee g^{-(n-1)}(\xi).
$$

We adopt the following definition from \cite[Remark~6.1.7]{Do11}.

\begin{definition}  [Refining sequences of open covers]    \label{defRefSeqOpenCover}
A sequence of open covers $\{\xi_i\}_{i\in\N_0}$ of a compact metric space $X$ is a \defn{refining sequence of open covers} of $X$ if the following conditions are satisfied
\begin{enumerate}
\smallskip
\item[(i)] $\xi_{i+1}$ is a refinement of $\xi_i$ for each $i\in\N_0$.

\smallskip
\item[(ii)] For each open cover $\eta$ of $X$, there exists $j\in\N$ such that $\xi_i$ is a refinement of $\eta$ for each $i\geq j$.
\end{enumerate} 
\end{definition}

By the Lebesgue Number Lemma (\cite[Lemma~27.5]{Mu00}), it is clear that for a compact metric space, refining sequences of open covers always exist.

The topological tail entropy was first introduced by M.~Misiurewicz under the name ``topological conditional entropy'' \cite{Mi73, Mi76}. We adopt the terminology in \cite{Do11} (see \cite[Remark~6.3.18]{Do11}).

\begin{definition}[Topological conditional entropy and topological tail entropy]   \label{defTopTailEntropy}
Let $(X,d)$ be a compact metric space and $g\:X\rightarrow X$ a continuous map. The \defn{topological conditional entropy} $h(g|\lambda)$ of $g$ given $\lambda$, for some open cover $\lambda$, is
\begin{equation}  \label{eqDefTopCondEntropy}
h(g|\lambda)= \lim\limits_{l\to+\infty} \lim\limits_{n\to+\infty}  \frac{1}{n} H \( \bigvee\limits_{i=0}^{n-1} g^{-i}\(\xi_l\) \Bigg| \bigvee\limits_{j=0}^{n-1} g^{-j}\(\lambda\)  \),
\end{equation}
where $\{\xi_l\}_{l\in\N_0}$ is an arbitrary refining sequence of open covers, and for each pair of open covers $\xi$ and $\eta$,
\begin{equation} \label{eqH(xi|eta)}
H(\xi | \eta) = \log \Big(\max_{A\in\eta} \Big\{\min\Big\{\card \xi_A \,\Big|\, \xi_A  \subseteq \xi,\, A \subseteq \bigcup \xi_A \Big\}\Big\} \Big)
\end{equation}
is the logarithm of the minimal number of sets from $\xi$ sufficient to cover any set in $\eta$. 

The \defn{topological tail entropy} $h^*(g)$ of $g$ is defined by
\begin{equation}   \label{eqDefTopTailEntropy}
h^*(g) = \lim\limits_{m\to+\infty}  \lim\limits_{l\to+\infty} \lim\limits_{n\to+\infty}  \frac{1}{n} H \( \bigvee\limits_{i=0}^{n-1} g^{-i}\(\xi_l\) \Bigg| \bigvee\limits_{j=0}^{n-1} g^{-j}\(\eta_m\)  \),
\end{equation} 
where $\{\xi_l\}_{l\in\N_0}$ and $\{\eta_m\}_{m\in\N_0}$ are two arbitrary refining sequences of open covers, and $H$ is as defined in (\ref{eqH(xi|eta)}).
\end{definition}

\begin{remark}
The topological entropy of $g$ (see Section~\ref{subsctThermDynForm}) is $h_{\operatorname{top}}(g) = h(g|\{X\})$, where $\{X\}$ is the open cover of $X$ consisting of only one open set $X$. See for example, \cite[Section~6.1]{Do11}.
\end{remark}

The limits in (\ref{eqDefTopCondEntropy}) and (\ref{eqDefTopTailEntropy}) always exist, and both $h(g|\lambda)$ and $h^*(g)$ are independent of the choices of refining sequences of open covers $\{\xi_l\}_{l\in\N_0}$ and $\{\eta_m\}_{m\in\N_0}$, see \cite[Section~6.3]{Do11}, especially the comments after \cite[Definition~6.3.14]{Do11}.

The topological tail entropy $h^*$ is also well-behaved under iterations, as it satisfies 
\begin{equation}   \label{eqH*gn=nH*g}
h^*(g^n) = n h^*(g)
\end{equation}
for each $n\in\N$ and each continuous map $g\:X\rightarrow X$ on a compact metric space $X$ (\cite[Proposition~3.1]{Mi76}).

The concept of $h$-expansiveness was introduced by R.~Bowen in \cite{Bow72}. We adopt the formulation in \cite{Mi76} (see also \cite{Do11}). 

\begin{definition}[$h$-expansiveness]    \label{defHExp}
A continuous map $g\: X\rightarrow X$ on a compact metric space $X$ is called \defn{$h$-expansive} if there exists a finite open cover $\lambda$ of $X$ such that $h(g|\lambda)=0$.
\end{definition}

A weaker property was then introduced by M.~Misiurewicz in \cite{Mi73} (see also \cite{Mi76, Do11}).

\begin{definition}[Asymptotic $h$-expansiveness]    \label{defAsympHExp}
We say that a continuous map $g\: X\rightarrow X$ on a compact metric space $X$ is \defn{asymptotically $h$-expansive} if $h^*(g)=0$.
\end{definition}

\subsection{Technical lemmas}  \label{subsctWeakExpLemmas}

Now we go back to the dynamical system $(S^2,f)$ where $f$ is an expanding Thurston map.

We need the following four lemmas for the proof of the asymptotic $h$-expansiveness of expanding Thurston maps with no periodic critical points.

\begin{lemma}[Uniform local injectivity away from the critical points]   \label{lmLocInjectAwayFromCrit}
Let $f$, $d$ satisfies the Assumptions. Then there exists a number $\delta_0 \in (0,1]$ and a function $\tau\: (0,\delta_0]\rightarrow (0,+\infty)$ with the following properties:
\begin{enumerate}
\smallskip
\item[(i)] $\lim\limits_{\delta \to 0} \tau(\delta) = 0$.

\smallskip
\item[(ii)] For each $\delta \leq \delta_0$, the map $f$ restricted to any open ball of radius $\delta$ centered outside the $\tau(\delta)$-neighborhood of $\crit f$ is injective, i.e., $f|_{B_d(x,\delta)}$ is injective for each $x \in S^2 \setminus N_d^{\tau(\delta)}(\crit f)$.
\end{enumerate}
\end{lemma}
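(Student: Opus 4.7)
The strategy is to exploit the cell decomposition $\DD^n(f,\CC)$ of Proposition~\ref{propCellDecomp} associated to an $f^{n_\CC}$-invariant Jordan curve $\CC\supseteq\post f$ (whose existence is guaranteed by Lemma~\ref{lmCexistsL}), combined with the local structure of flowers recorded in Remark~\ref{rmFlower}. The key observation is that whenever a vertex $v\in\V^{n+1}$ satisfies $\deg_f(v)=1$ (i.e., $v\notin\crit f$), the restriction $f|_{\overline{W^{n+1}(v)}}$ is a homeomorphism onto $\overline{W^n(f(v))}$ and, in particular, is injective. Since $f(c)\in\post f$ for every $c\in\crit f$, Proposition~\ref{propCellDecomp}(iii) gives $\crit f\subseteq f^{-1}(\post f)=\V^1\subseteq\V^{n+1}$ for all $n\in\N_0$, so the only $(n+1)$-flowers on which $f$ can fail to be injective are the finitely many ones centered at the critical points. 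Thus if a ball $B_d(x,\delta)$ can be fitted inside some closed $(n+1)$-flower $\overline{W^{n+1}(v)}$ with $v\notin\crit f$, then $f|_{B_d(x,\delta)}$ is automatically injective.

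To make this quantitative I need a ball-in-flower covering statement of the following form: there exists a constant $K\geq 1$, depending only on $f$, $\CC$, and $d$, such that for every $n\in\N_0$ and every $x\in S^2$ one can find $v\in\V^{n+1}$ with
\begin{equation*}
B_d\bigl(x,\,K^{-1}\Lambda^{-(n+1)}\bigr)\ \subseteq\ \overline{W^{n+1}(v)}.
\end{equation*}
Once such a $v$ is fixed, Lemma~\ref{lmCellBoundsBM} together with the description of $\overline{W^{n+1}(v)}$ as a union of $(n+1)$-tiles meeting at $v$ (Remark~\ref{rmFlower}) forces $d(x,v)\leq C\Lambda^{-(n+1)}$. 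The covering statement itself is essentially a uniform-in-$n$ lower bound of order $\Lambda^{-(n+1)}$ for the Lebesgue number of the finite open cover $\W^{n+1}$ of $(S^2,d)$, and is the main technical obstacle of the proof; I expect it to fall out of the quantitative visual-metric machinery of \cite[Chapter~8]{BM10}, using Lemma~\ref{lmCellBoundsBM}, the linear local connectivity of $(S^2,d)$, and uniform separation of disjoint $(n+1)$-cells at scale $\Lambda^{-(n+1)}$.

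Granting the covering statement, set $\delta_0:=\min\{1,K^{-1}\Lambda^{-1}\}$ and, for each $\delta\in(0,\delta_0]$, let $n=n(\delta)\in\N_0$ be the unique integer with $K^{-1}\Lambda^{-(n+2)}<\delta\leq K^{-1}\Lambda^{-(n+1)}$. Applying the covering statement at level $n+1$ produces $v\in\V^{n+1}$ with $B_d(x,\delta)\subseteq\overline{W^{n+1}(v)}$ and $d(x,v)\leq C\Lambda^{-(n+1)}$. I then define $\tau(\delta):=2C\Lambda^{-(n+1)}$; since $\Lambda^{-(n+1)}<\Lambda K\delta$ by the choice of $n$, we obtain $\tau(\delta)<2CK\Lambda\,\delta$, proving property~(i). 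For property~(ii), assume $x\in S^2\setminus N_d^{\tau(\delta)}(\crit f)$, so that $d(x,\crit f)\geq\tau(\delta)$. If $v$ were a critical point, then $d(x,\crit f)\leq d(x,v)\leq C\Lambda^{-(n+1)}<\tau(\delta)$, a contradiction; hence $v\notin\crit f$, and the opening observation yields injectivity of $f|_{\overline{W^{n+1}(v)}}$ and a fortiori of $f|_{B_d(x,\delta)}$.
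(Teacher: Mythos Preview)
Your approach is correct but takes a genuinely different route from the paper. The paper's argument is a soft compactness argument that uses nothing about Thurston maps beyond the fact that $f$ is a branched covering with finitely many critical points: it defines the injectivity radius $r(x)=\sup\{R>0: f|_{B_d(x,R)}\text{ is injective}\}$, checks that $r$ is continuous on $S^2\setminus\crit f$, sets $\sigma(t)=\inf\{r(x): d(x,\crit f)\geq t\}$, and then obtains $\tau$ essentially as the generalized inverse of $\sigma$. No cell decompositions or visual-metric estimates enter at all.

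Your combinatorial route via flowers is heavier but buys more: you obtain the quantitative bound $\tau(\delta)\leq 2CK\Lambda\,\delta$, i.e., $\tau(\delta)=O(\delta)$, which the paper's soft argument does not yield. The ball-in-flower covering statement you need is indeed available in \cite{BM10} (it is part of the same Lemma~8.10 whose diameter estimate is quoted here as Lemma~\ref{lmCellBoundsBM}; the full statement there also provides a constant $K\geq 1$ with $B_d(x,K^{-1}\Lambda^{-n})\subseteq W^n(v)$ for some $v\in\V^n$), so your acknowledged gap is fillable by citation. Two minor points: first, the version in \cite{BM10} gives containment in the \emph{open} flower $W^{n+1}(v)$, which is all you need, since Remark~\ref{rmFlower} already gives injectivity of $f$ on $W^{n+1}(v)$ when $\deg_f(v)=1$; you need not argue injectivity on the closed flower. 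Second, since an open ball contained in $\overline{W^{n+1}(v)}$ automatically lies in its $S^2$-interior, and this interior equals $W^{n+1}(v)$, the distinction is harmless anyway.
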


This lemma is straightforward to verify, but for the sake of completeness, we include the proof here.

\begin{proof}
We first define a function $r\: S^2\setminus \crit f \rightarrow (0,+\infty)$ in the following way
\begin{equation*}
r(x) = \sup \{ R>0 \,|\, f|_{B_d(x,R)} \text{ is injective} \},
\end{equation*}
for $x\in S^2\setminus \crit f$. Note that $r(x) \leq d(x,\crit f) <+\infty$ for each $x\in S^2\setminus \crit f$. We also observe that the supremum is attained, since otherwise, suppose $f(y)=f(z)$ for some $y,z\in B(x,r(x))$, then $f$ is not injective on the ball $B(x,R_0)$ containing $y$ and $z$ with $R_0 = \frac12(r(x) + \max\{d(x,y),\,d(x,z)\}) <r(x)$, a contradiction.

We claim that $r$ is continuous.

Indeed, let $\{x_i\}_{i\in\N}$ be a sequence of points in $S^2$ and $x\in S^2$ with the property that $\lim\limits_{i\to+\infty} x_i=x$. For each $i\in\N$, if $r(x_i) - d(x_i,x)>0$, then $B(x,r(x_i)-d(x_i,x)) \subseteq B(x_i,r(x_i))$. So $r(x) \geq r(x_i) - d(x_i,x)$. Thus
$$
r(x) \geq \limsup\limits_{i\to+\infty} (r(x_i)-d(x_i,x)) =\limsup\limits_{i\to+\infty} r(x_i).
$$
On the other hand, for each $i\in \N$, if $r(x_i) - d(x_i,x)>0$, then $B(x_i,r(x)-d(x_i,x)) \subseteq B(x,r(x))$. So $r(x_i) \geq r(x) - d(x_i,x)$. Thus
$$
\liminf\limits_{i\to+\infty} r(x_i) \geq \liminf\limits_{i\to+\infty}(r(x)-d(x_i,x)) =r(x).
$$
Hence $r(x)=\lim\limits_{i\to+\infty} r(x_i)$. So $r$ is continuous and the claim is proved.

Next, we fix a sufficiently small number $t_0>0$ with $S^2\setminus N_d^{t_0} (\crit f) \neq \emptyset$. We define a function $\sigma \: (0,t_0] \rightarrow (0,+\infty)$ by setting
\begin{equation*}
\sigma(t) = \inf \{ r(x) \,|\, x\in S^2\setminus N_d^t (\crit f)  \}
\end{equation*}
for $t\in (0,t_0]$. We observe that $\sigma$ is continuous and non-decreasing. Since $r(x)\leq d(x,\crit f)$ for each $x\in S^2\setminus \crit f$, we can conclude that $\lim\limits_{t\to 0} \sigma(t) = 0$. By the definition of $\sigma$, we get that $f|_{B_d(x,\sigma(t))}$ is injective, for $t\in (0,t_0]$ and $x\in S^2\setminus N_d^t(\crit f)$.

Finally, we construct $\tau\: (0,\delta_0]\rightarrow (0,+\infty)$, where $\delta_0 = \min \{1,\sigma(t_0)\}$ by setting
\begin{equation}   \label{eqTau}
\tau (\delta) = \inf \{ t\in (0,t_0] \,|\, \sigma(t) \geq \delta \}
\end{equation}
for each $\delta\in(0,\delta_0]$. We note that $\lim\limits_{\delta \to 0}  \tau(\delta) = 0$.

For $\delta\in(0,\delta_0]$ and $t\in (\tau(\delta),t_0]$, we have $\sigma(t) \geq \delta$ by (\ref{eqTau}) and the fact that $\sigma$ is non-decreasing. Since $\sigma$ is continuous on $(0,t_0]$, we get $ \sigma(\tau(\delta)) \geq \delta$. For each $x\in S^2 \setminus N_d^{\tau(\delta)} (\crit f)$, we know from the definition of $\sigma$ that $f|_{B_d(x,\sigma(\tau(\delta)))}$ is injective. Therefore $f|_{B_d(x,\delta)}$ is injective.
\end{proof}

\begin{lemma}   \label{lmTileInFlower}
Let $f$ and $\CC$ satisfy the Assumptions. Fix $m,n\in\N_0$ with $m<n$. If $f(\CC)\subseteq \CC$ and no $1$-tile in $\X^1(f,\CC)$ joins opposite sides of $\CC$, then the following statements hold:
\begin{enumerate}

\smallskip
\item[(i)] For each $n$-vertex $v\in \V^n(f,\CC)$ and each $m$-vertex $w\in\V^m(f,\CC)$, if $v\notin \overline{W}^m(w)$, then $W^m(w)\cap W^n(v)=\emptyset$.

\smallskip

\item[(ii)] For each $n$-tile $X^n\in \X^n(f,\CC)$, there exists an $m$-vertex $v^m\in \V^m(f,\CC)$ such that $X^n\subseteq W^m(v^m)$.

\smallskip
\item[(iii)] For each pair of distinct $m$-vertices $p,q\in\V^m(f,\CC)$, $\overline{W}^{n+1}(p) \cap \overline{W}^{n+1}(q) = \emptyset$.
\end{enumerate}
\end{lemma}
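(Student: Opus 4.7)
The plan is to prove all three parts by exploiting the refinement structure of the cell decompositions $\DD^n=\DD^n(f,\CC)$ together with the hypothesis that no $1$-tile joins opposite sides of $\CC$. Before addressing the lemma itself, I would establish the key combinatorial fact that no $1$-tile in $\X^1(f,\CC)$ contains two distinct $0$-vertices: if such a $1$-tile $Y$ contained $p_0\neq q_0\in\V^0$, then $Y$ would meet every $0$-edge incident to $p_0$ and every $0$-edge incident to $q_0$, and a case analysis on $\card(\post f)$ along the cycle of $0$-edges of $\CC$ always produces a disjoint pair of such $0$-edges (or all three when $\card(\post f)=3$), contradicting the hypothesis. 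Since $f^m$ restricts to a homeomorphism on each $(m+1)$-tile by Proposition~\ref{propCellDecomp}(i) and $f(\V^0)\subseteq\V^0$, pushing this forward yields the strengthening I actually need: no $(m+1)$-tile contains two distinct $m$-vertices.

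For part~(i), I would argue directly from the definitions. Suppose $x\in W^n(v)\cap W^m(w)$; then $x\in \inte(c_n)\cap \inte(c_m)$ for some $c_n\in\DD^n$ with $v\in c_n$ and some $c_m\in\DD^m$ with $w\in c_m$. The condition $f(\CC)\subseteq\CC$ gives the nested $1$-skeletons $f^{-m}(\CC)\subseteq f^{-n}(\CC)$, so $\DD^n$ refines $\DD^m$; the disjointness of cell interiors in a cell decomposition then forces $c_n\subseteq c_m$, whence $v\in c_m$ and so $v\in\overline{W}^m(w)$, contradicting the hypothesis.

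For part~(ii), the strategy is to handle the base case $n=m+1$ first and then reduce the general case using $X^n\subseteq X^{m+1}$ for some $(m+1)$-tile. In the base case set $Y=f^m(X^{m+1})$, a $1$-tile by Proposition~\ref{propCellDecomp}(i). Using the key combinatorial fact together with the hypothesis, I would show $Y\subseteq W^0(v^0)$ for some $0$-vertex $v^0$---all the $0$-edges that $Y$ meets share the common $0$-vertex $v^0$ by the same cycle-graph argument, and $Y$ harbors no other $0$-vertex. To lift back to $X^{m+1}$, I will use the identity
\[
f^{-m}(W^0(v^0))=\bigsqcup_{\tilde v^0\in f^{-m}(v^0)} W^m(\tilde v^0),
\]
which is a \emph{disjoint} union because two distinct preimages of $v^0$ cannot lie in a common $m$-cell, otherwise $f^m$ restricted to that cell (a homeomorphism by Proposition~\ref{propCellDecomp}(i)) would collapse two points. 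Connectedness of $X^{m+1}$ then places it inside a single $W^m(\tilde v^0)$.

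For part~(iii), I apply part~(ii) at flower level $m+1$ (which is legitimate since $n+1>m+1$) to obtain $\overline{W}^{n+1}(p)\subseteq W^{m+1}(p)$ and $\overline{W}^{n+1}(q)\subseteq W^{m+1}(q)$; the containment is forced because $p$ and $q$ are themselves $(m+1)$-vertices, and a vertex lies in $\inte(c)$ only when $c$ is the $0$-cell consisting of that very vertex. It therefore suffices to show $W^{m+1}(p)\cap W^{m+1}(q)=\emptyset$, which amounts to showing that no $(m+1)$-cell of positive dimension contains both $p$ and $q$. The $(m+1)$-tile case is exactly ruled out by the key combinatorial fact; an $(m+1)$-edge with endpoints $p,q$ would lie on the boundary of some $(m+1)$-tile, which would then contain both $p$ and $q$, reducing to the previous case. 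I expect the main technical obstacle to be the lifting step in~(ii), which rests on the disjointness of preimage flowers of a single $0$-vertex; once that is in place, the rest of the lemma follows by clean bookkeeping from the combinatorial fact.
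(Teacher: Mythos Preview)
Your proposal is correct. Part~(i) matches the paper's argument essentially verbatim, and the ``key combinatorial fact'' you isolate upfront is exactly the observation the paper records at the start of its proof of~(iii).

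The organization for (ii) and (iii), however, diverges from the paper's. For~(ii) the paper reduces to $n=m+1$, takes the unique $m$-tile $X^m\supseteq X^{m+1}$, and runs a three-case analysis on how $X^{m+1}$ meets $\partial X^m$ (interior only; touching the interior of one or two adjacent $m$-edges; containing an $m$-vertex), in each case exhibiting $v^m$ directly. You instead push $X^{m+1}$ down by $f^m$ to a $1$-tile $Y$, argue $Y\subseteq W^0(v^0)$ at level~$0$, and lift back via the decomposition $f^{-m}(W^0(v^0))=\bigsqcup_{\tilde v\in f^{-m}(v^0)} W^m(\tilde v)$ together with connectedness. For~(iii) the paper (after reducing to $n=m+1$) invokes part~(i) to get $W^n(p)\cap W^{n+1}(q)=\emptyset$ and then shows $\overline{W}^{n+1}(p)\subseteq W^n(p)$; you instead invoke part~(ii) to get $\overline{W}^{n+1}(p)\subseteq W^{m+1}(p)$ and then show $W^{m+1}(p)\cap W^{m+1}(q)=\emptyset$ directly from the key fact. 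Your route for~(ii) is cleaner conceptually (one argument at level~$0$ replaces the three-case analysis at level~$m$) but requires the preimage-flower identity as an extra ingredient; the paper's route is more self-contained. For~(iii) both approaches are equally short; yours makes~(iii) depend on~(ii) rather than~(i), which is a harmless reshuffling.
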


Recall that $W^n$ is defined in (\ref{defFlower}) and $\overline{W}^n(p)$ is the closure of $W^n(p)$. Note that a flower is an open set (see \cite[Lemma~7.2]{BM10}) and by definition a tile is a closed set. 

\begin{proof}
We first observe that in order to prove any of the statements in the lemma, it suffices to assume $n=m+1$. So we will assume, without loss of generality, that $n=m+1$.

\smallskip
(i) Since $v\notin\overline{W}^m(w)$, by (\ref{defFlower}) we get that $v\notin c$ for each $m$-cell $c\in\DD^m$ with $w\in c$. Since $f(\CC)=\CC$, for each $n$-cell $c'\in\DD^n$ and each $m$-cell $c\in\DD^m$, if $c\cap \inte(c')\neq \emptyset$, then $c'\subseteq c$ (see Lemma~4.3 and the proof of Lemma~4.7 in \cite{BM10}). Thus $c\cap \inte(c')= \emptyset$ for $c\in\DD^m$ and $c'\in\DD^n$ with $w\in c$ and $v\in c'$. So $W^m(w)\cap W^n(v)=\emptyset$ by (\ref{defFlower}).

\smallskip
(ii) Let $X^m \in \X^m$ be the unique $m$-tile with $X^n\subseteq X^m$. Depending on the location of $X^n$ in $X^m$, it suffices to prove statement~(ii) in the following cases:

\begin{enumerate}

\smallskip
\item[(1)] Assume that $X^n \subseteq \inte (X^m)$. Then $X^n \subseteq W^m(v^m)$ for any $v^m \in X^m \cap \V^m$.

\smallskip
\item[(2)] Assume that $\emptyset \neq X^n\cap e \subseteq \inte (e)$ for some $m$-edge $e\in \E^m$ with $e\subseteq X^m$. Then since no $1$-tile joins opposite sides of $\CC$, by Proposition~\ref{propCellDecomp}(i), either $X^n \cap \partial X^m \subseteq \inte (e)$ or there exists $e'\in \E^m$ such that $X^n \cap \partial X^m \subseteq \inte (e) \cup \inte (e')$ and $e\cap e' = \{v\}$ for some $v\in\V^m$. In the former case, choose any $v^m \in e\cap \V^m$; and in the latter case, let $v^m=v$. Then $X^n \subseteq W^m(v^m)$.

\smallskip
\item[(3)] Assume $X^n\cap \V^m \neq \emptyset$. Since no $1$-tile joins opposite sides of $\CC$, by Proposition~\ref{propCellDecomp}(i), there exists some $m$-vertex $v^m\in \V^m$ such that $X^n \cap \V^m =\{v^m\}$. Let $e,e'\in \E^m$ be the two $m$-edges that satisfy $e\cup e' \subseteq X^m$ and $e\cap e' = \{v^m\}$. Then by Proposition~\ref{propCellDecomp}(i) and the assumption that no $1$-tile joins opposite sides of $\CC$, we get that $X^n\cap\partial X^m \subseteq \{v^m\} \cup \inte (e) \cup \inte (e')$. Thus $X^n \subseteq W^m(v^m)$.
\end{enumerate}

\smallskip
(iii) We observe that since no $1$-tile in $\X^1$ joins opposite sides of $\CC$ and $f(\CC)\subseteq \CC$, by Proposition~\ref{propCellDecomp}(i), each $(k+1)$-tile $X^{k+1}$ contains at most one $k$-vertex, for $k\in\N_0$.

Let $p,q\in\V^m$ be distinct. Then by Remark~\ref{rmFlower} and the observation above, we know $q\notin\overline{W}^n(p)$. So by part~(i), we get $W^n(p)\cap W^{n+1}(q) = \emptyset$. Since flowers are open sets, we have $W^n(p)\cap \overline{W}^{n+1}(q) = \emptyset$. It suffices to prove that $\overline{W}^{n+1}(p) \subseteq W^n(p)$. Indeed this inclusion is true; for otherwise, there exists an $(n+1)$-tile $X^{n+1}\subseteq \overline{W}^{n+1}(p)$ and a point $x\in \overline{W}^n(p) \setminus W^n(p)$ such that $\{x,p\}\subseteq X^{n+1}$. By (\ref{defFlower}) and applying Proposition~\ref{propCellDecomp}(i), we get a contradiction to the assumption that no $1$-tile in $\X^1$ joins opposite sides of $\CC$.
\end{proof}

Let $f\: S^2\rightarrow S^2$ be an expanding Thurston map, and $\CC\subseteq S^2$ a Jordan curve containing $\post f$ such that $f(\CC) \subseteq \CC$. We denote, for $m\in\N_0$, $n\in\N$, $q\in S^2$, and $q_i\in \V^m(f,\CC)$ for $i\in\{0,1,\dots,n-1\}$,
\begin{align}    \label{eqDefEm}
  & E_m(q_0,q_1,\dots,q_{n-1};q)  \notag \\
= & \big\{ x\in f^{-n}(q) \,\big|\, f^i(x)\in \overline{W}^m(q_i), i\in \{0,1,\dots,n-1\} \big\}\\
= & f^{-n}(q) \cap \bigg( \bigcap\limits_{i=0}^{n-1} f^{-i}\(\overline{W}^m(q_i)\) \bigg),    \notag
\end{align}
where $\overline{W}^m(q_i)$ is the closure of the $m$-flower $W^m(q_i)$ as defined in Section~\ref{sctThurstonMap}.

\begin{lemma}   \label{lmCoverByFlowers}
Let $f\: S^2\rightarrow S^2$ be an expanding Thurston map, and $\CC\subseteq S^2$ a Jordan curve containing $\post f$ such that $f(\CC) \subseteq \CC$. Then 
\begin{equation}
\bigcap\limits_{i=0}^n f^{-i} (W^m(p_i))  \subseteq \bigcup\limits_{x\in E_m(p_0,p_1,\dots,p_{n-1};p_n)}  W^{m+n} (x),
\end{equation}
for $m\in\N_0$, $n\in\N$, and $p_i\in\V^m(f,\CC)$ for $i\in \{0,1,\dots,n\}$. Here $E_m$ is defined in (\ref{eqDefEm}).
\end{lemma}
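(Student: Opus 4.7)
The plan is to take any $y\in \bigcap_{i=0}^n f^{-i}(W^m(p_i))$ and exhibit an $(m+n)$-vertex $v \in E_m(p_0,\ldots,p_{n-1};p_n)$ such that $y\in W^{m+n}(v)$. First, I would let $c\in\DD^{m+n}(f,\CC)$ be the unique $(m+n)$-cell whose interior contains $y$. By Proposition~\ref{propCellDecomp}(i), $f^n|_c$ is a homeomorphism onto $f^n(c)\in\DD^m(f,\CC)$; combining $f^n(y)\in\inte(f^n(c))$ with the hypothesis $f^n(y)\in W^m(p_n)$, which gives $f^n(y)\in\inte(c_n')$ for some $m$-cell $c_n'$ containing $p_n$, the disjointness of the interiors of $m$-cells forces $f^n(c)=c_n'$, so $p_n\in f^n(c)$. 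Let $v$ be the unique preimage of $p_n$ in $c$ under $f^n|_c$; cellularity of $f^n$ forces $v$ to be an $(m+n)$-vertex, and then $y\in\inte(c)\subseteq W^{m+n}(v)$ by (\ref{defFlower}), while $f^n(v)=p_n$ by construction.

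The heart of the proof is to verify $f^i(v)\in\overline{W}^m(p_i)$ for each $i\in\{0,1,\ldots,n-1\}$. Since $f(\CC)\subseteq\CC$ gives $f^k(\CC)\subseteq\CC$ for all $k\geq 0$, the standard refinement property for these cell decompositions is available: if a cell $\sigma\in\DD^{m+n-i}(f,\CC)$ and a cell $\tau\in\DD^m(f,\CC)$ share an interior point, then $\sigma\subseteq\tau$ (compare \cite[Lemma~5.3]{BM10}). I would apply this with $\sigma=f^i(c)\in\DD^{m+n-i}(f,\CC)$ and with $\tau=c_i'\in\DD^m(f,\CC)$ the unique $m$-cell containing $p_i$ with $f^i(y)\in\inte(c_i')$ (produced by $f^i(y)\in W^m(p_i)$); both interiors contain $f^i(y)$, since $f^i|_c$ is a homeomorphism and $y\in\inte(c)$. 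This gives $f^i(c)\subseteq c_i'$, hence $f^i(v)\in f^i(c)\subseteq c_i'\subseteq \overline{W}^m(p_i)$, where the last inclusion uses the elementary identity $\overline{W}^m(p_i)=\bigcup\{c\in\DD^m(f,\CC) : p_i\in c\}$ together with $p_i\in c_i'$.

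Combining the two steps yields $v\in E_m(p_0,\ldots,p_{n-1};p_n)$ and $y\in W^{m+n}(v)$, establishing the asserted inclusion. The only mildly delicate point is applying the refinement property correctly, but this is precisely where the hypothesis $f(\CC)\subseteq\CC$ enters; without some invariance of $\CC$ there would be no reason for $\DD^{m+n-i}$ to refine $\DD^m$, and the argument would collapse.
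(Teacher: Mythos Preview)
Your proof is correct and takes a genuinely different route from the paper's. The paper argues by induction on $n$: the base case $n=1$ uses the identity $f^{-1}(W^m(p_1)) = \bigcup_{x\in f^{-1}(p_1)} W^{m+1}(x)$ together with Lemma~\ref{lmTileInFlower}(i) to discard those $x\notin\overline{W}^m(p_0)$; the inductive step then unwinds a nesting of unions and intersections via a short set-theoretic claim identifying $\bigcup_{x\in E_m(p_1,\dots,p_l;p_{l+1})} E_m(p_0;x)$ with $E_m(p_0,\dots,p_l;p_{l+1})$. Your argument is direct: you pick the unique $(m+n)$-cell $c$ whose interior contains $y$, extract the vertex $v=(f^n|_c)^{-1}(p_n)$, and push each $f^i(c)$ into the appropriate $m$-cell $c_i'\subseteq\overline{W}^m(p_i)$ by the refinement property of the cell decompositions. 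Both proofs ultimately rest on the same structural fact---that $f(\CC)\subseteq\CC$ forces $\DD^{m+k}$ to refine $\DD^m$ for all $k\geq 0$ (the paper invokes this inside the proof of Lemma~\ref{lmTileInFlower}(i), citing \cite[Lemma~4.3 and the proof of Lemma~4.7]{BM10}; your citation of \cite[Lemma~5.3]{BM10} presumably points to the same content in a different numbering)---but your packaging is pointwise and avoids the inductive bookkeeping entirely. One minor remark: your phrase ``cellularity of $f^n$ forces $v$ to be an $(m+n)$-vertex'' is correct but terse; the cleanest justification is simply that $f^n(v)=p_n\in\V^m=f^{-m}(\post f)$ gives $v\in f^{-(m+n)}(\post f)=\V^{m+n}$ by Proposition~\ref{propCellDecomp}(iii).
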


\begin{proof}
We prove the lemma by induction on $n\in\N$.

For $n=1$, we know that for all $p_0,p_1\in \V^m(f,\CC)$,
\begin{align*}
W^m(p_0) \cap f^{-1}(W^m(p_1)) 
\subseteq & \bigcup \big\{ W^{m+1}(x) \,\big|\, x\in f^{-1}(p_1),x\in\overline{W}^m(p_0)  \big\}  \\
 =        & \bigcup\limits_{x\in E_m(p_0;p_1)}  W^{m+1}(x)
\end{align*}
by (\ref{eqDefEm}) and the fact that $W^{m+1}(x)\cap W^m(p_0) = \emptyset$ if both $x\in \V^{m+1}(f,\CC)$ and $x\notin \overline{W}^m(p_0)$ are satisfied (see Lemma~\ref{lmTileInFlower}(i)).

We now assume that the lemma holds for $n=l$ for some $l\in\N$.

We fix a point $p_i\in \V^m(f,\CC)$ for each $i\in \{0,1,\dots,l,l+1\}$. Then
\begin{equation*}
\bigcap\limits_{i=0}^{l+1} f^{-i} (W^m(p_i))   =  W^m(p_0) \cap f^{-1}  \bigg(\bigcap\limits_{i=1}^{l+1} f^{-(i-1)}(W^m(p_i))\bigg).
\end{equation*}
By induction hypothesis, the right-hand side of the above equation is a subset of
\begin{align*}
          & W^m(p_0) \cap  f^{-1} \bigg( \bigcup\limits_{x\in E_m(p_1,p_2,\dots,p_l;p_{l+1})} W^{m+l}(x)  \bigg) \\
  =       & \bigcup\limits_{x\in E_m(p_1,p_2,\dots,p_l;p_{l+1})} \( W^{m}(p_0) \cap f^{-1}\(W^{m+l}(x)\)  \)  \\
\subseteq & \bigcup\limits_{x\in E_m(p_1,p_2,\dots,p_l;p_{l+1})} \( \bigcup\big\{ W^{m+l+1}(y)  \,|\, y\in f^{-1}(x), y\in \overline{W}^m(p_0)\big\}  \)    \\
=         & \bigcup\limits_{x\in E_m(p_1,p_2,\dots,p_l;p_{l+1})} \bigcup\limits_{y\in E_m(p_0;x)}  W^{m+l+1}(y),
\end{align*}
where the last two lines is due to (\ref{eqDefEm}) and the fact that $W^{m+l+1}(y)\cap W^m(p_0) = \emptyset$ if both $y\in \V^{m+l+1}(f,\CC)$ and $y\notin \overline{W}^m(p_0)$ are satisfied (see Lemma~\ref{lmTileInFlower}(i)).

We claim that 
\begin{equation*}
\bigcup\limits_{x\in E_m(p_1,p_2,\dots,p_l;p_{l+1})}  E_m(p_0;x) = E_m(p_0,p_1,\dots,p_l; p_{l+1}).
\end{equation*}

Assuming the claim, we then get
\begin{equation*}
\bigcap\limits_{i=0}^{l+1} f^{-i} (W^m(p_i)) \subseteq \bigcup\limits_{x\in E_m(p_0,p_1,\dots,p_l;p_{l+1})}  W^{m+l+1}(y).
\end{equation*}

Thus it suffices to prove the claim now. Indeed, by (\ref{eqDefEm}),
\begin{align*}
   & \bigcup\limits_{x\in E_m(p_1,p_2,\dots,p_l;p_{l+1})}  E_m(p_0;x)  \\
=  & \bigg\{ y\in f^{-1}(x) \,\bigg|\, y\in \overline{W}^m(p_0), x\in f^{-l}(p_{l+1}) \cap \bigg( \bigcap\limits_{i=1}^l f^{-i+1} \( \overline{W}^m(p_i) \)  \bigg)  \bigg\} \\
=  & \bigg\{  y\in f^{-l-1}(p_{l+1}) \,\bigg|\,  y\in \overline{W}^m(p_0), f(y) \in  \bigcap\limits_{i=1}^l f^{-i+1} \( \overline{W}^m(p_i) \)   \bigg\}   \\
=  & E_m(p_0,p_1,\dots,p_l;p_{l+1}).
\end{align*}

The induction step is now complete.
\end{proof}

\smallskip

We now review the notions of a simple directed graph and of a finite rooted tree that will be used in the proof of Theorem~\ref{thmAsympHExpWOPC}. Since the only purpose of such notions is to make the statements and proofs precise, and we will not use any nontrivial facts from graph theory, we adopt here a simplified approach to define relevant concepts as quickly as possible (compare \cite{BJG09}).

A \defn{simple directed graph} $\G=(\VV(\G),\EE(\G))$ is made up from a \defn{set of vertices} $\VV(\G)$ and a \defn{set of directed edges} 
$$
\EE(\G) \subseteq \VV(\G)\times \VV(\G) \setminus \{(v,v)\,|\,v\in\VV(\G)\}.
$$
A simple directed graph $\G$ is \defn{finite} if $\card\VV(\G) < +\infty$. Two \defn{vertices} $v,w\in \VV(\G)$ are \defn{connected by a directed edge $(v,w)$} if $(v,w)\in\EE(\G)$. If $e=(v,w)\in\EE(\G)$, then we call $v$ the \defn{initial vertex} of $e$, denoted by $i(e)$, and $w$ the \defn{terminal vertex} of $e$, denoted by $t(e)$. The \defn{indegree} of a vertex $v\in\VV(\G)$ is $d^-(v)=\card\{w\in\VV(\G) \,|\, (w,v)\in \EE(\G) \}$, and the \defn{outdegree} of $v$ is $d^+(v)=\card\{w\in\VV(\G) \,|\, (v,w)\in \EE(\G) \}$. A \defn{path} from a vertex $v\in\VV(\G)$ to a vertex $w\in\VV(\G)$ is a finite sequence of vertices $v=v_0,v_1,v_2,\dots,v_{n-1},v_n=w$ such that $(v_i,v_{i+1})\in\EE(\G)$ for each $i\in\{0,1,\dots,n-1\}$. The \defn{length} of such a path is $n$. The \defn{distance from $v$ to $w$} is the minimal length of all paths from $v$ to $w$. By convention, the distance from $v$ to $v$ is $0$, and if there is no path from $v$ to $w$ for $v\neq w$, then the distance from $v$ to $w$ is $\infty$. If the distance of $v$ to $w$ is $n\in\N_0$, then we say that $w$ is at a distance $n$ from $v$.

A finite simple directed graph $\T$ is a \defn{a finite rooted tree} if there exists a vertex $r\in\VV(\T)$ such that for each vertex $v\in\VV(\T)\setminus \{r\}$ there exists a unique path from $r$ to $v$. We call such a simple directed graph a \defn{finite rooted tree with root $r$}, and $r$ the \defn{root} of $\T$. Note that a finite rooted tree has a unique root. A vertex $v$ of a finite rooted tree $\T$ is called a \defn{leaf} (of $\T$) if $d^+(v)=0$. If $(v,w)\in \EE(\T)$, then $w$ is said to be a \defn{child} of $v$.

\begin{lemma}[A bound for the number of leaves]   \label{lmTree}
Let $\T$ be a finite rooted tree with root $r$ whose leaves are all at the same distance from $r$. Assume that there exist constants $c,k\in\N$ with the following properties:
\begin{enumerate}
\smallskip
\item[(i)] $d^+(x) \leq c$ for each vertex $x\in\VV(\T)$,

\smallskip
\item[(ii)] for each leaf $v$, the number of vertices $w$ with $d^+(w) \geq 2$ in the path from $r$ to $v$ is at most $k$.
\end{enumerate}
Then then number of leaves of $\T$ is at most $c^k$.
\end{lemma}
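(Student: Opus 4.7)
The plan is to prove this by induction on $k \geq 0$, exploiting the simple observation that in any rooted tree, two distinct root-to-leaf paths must first diverge at a vertex of outdegree at least $2$.

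For the base case $k = 0$, no vertex on any root-to-leaf path is a branching vertex. Since any two distinct leaves would correspond to two distinct root-to-leaf paths (paths to distinct vertices in a tree are distinct), and any two distinct paths from $r$ must diverge at a vertex of outdegree $\geq 2$, all leaves must coincide. Hence $\T$ has exactly one leaf, and $1 \leq c^0$.

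For the inductive step, assume the statement for $k-1$. If $\T$ has no branching vertex, the base case argument applies and there is a single leaf. Otherwise, let $u$ be a branching vertex at minimal distance from $r$. Every vertex strictly preceding $u$ on the path $r \to u$ is non-branching and lies on a root-to-leaf path, hence has outdegree exactly $1$; consequently, this initial chain forces every root-to-leaf path to pass through $u$. Let $u_1,\dots,u_j$ with $j \leq c$ enumerate the children of $u$, and let $\T_i$ be the subtree rooted at $u_i$. Each $\T_i$ is a finite rooted tree whose leaves (which are precisely the leaves of $\T$ contained in $\T_i$) all lie at the common distance $L - \dist(r,u) - 1$ from $u_i$, where $L$ is the common root-to-leaf distance in $\T$. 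It inherits condition~(i) with the same constant $c$, and since any root-to-leaf path in $\T$ passes through $u$ (which is itself branching), each root-to-leaf path inside $\T_i$ contains at most $k-1$ branching vertices, so condition~(ii) holds with $k-1$. By the inductive hypothesis, each $\T_i$ has at most $c^{k-1}$ leaves, giving the bound
\begin{equation*}
|\text{leaves}(\T)| \;=\; \sum_{i=1}^{j} |\text{leaves}(\T_i)| \;\leq\; j \cdot c^{k-1} \;\leq\; c \cdot c^{k-1} \;=\; c^k.
\end{equation*}

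There is no serious obstacle here; the only point requiring a moment of care is the verification that every root-to-leaf path in $\T$ necessarily passes through the chosen branching vertex $u$, which is what guarantees that the leaves of $\T$ partition as the disjoint union of the leaves of the subtrees $\T_1,\dots,\T_j$. This in turn relies on the minimality of $\dist(r,u)$ among branching vertices, which forces the initial segment of the path from $r$ to $u$ to consist of forced (outdegree-$1$) steps.
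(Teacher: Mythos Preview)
Your proof is correct and takes a genuinely different route from the paper. The paper argues by a mass-distribution (or ``weight'') function: it sets $h(r)=1$ and $h(v)=h(w)/d^+(w)$ for $w$ the parent of $v$, observes that the total weight at each level is $1$, and that condition~(ii) together with~(i) forces $h(v)\ge c^{-k}$ at every leaf, whence there are at most $c^k$ leaves. Your argument instead performs a structural induction on $k$: you locate the first branching vertex $u$ along the forced initial chain from $r$, split the tree into the at most $c$ subtrees hanging from the children of $u$, and apply the inductive hypothesis with $k-1$ to each.

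Both arguments are elementary and of comparable length. The paper's weight function is slightly slicker in that it avoids having to identify a distinguished branching vertex and argue that all root-to-leaf paths pass through it; your induction is perhaps more transparently combinatorial. A minor observation: neither argument genuinely needs the hypothesis that all leaves lie at the same depth (in your proof you invoke it only to verify that each $\T_i$ again satisfies this hypothesis, but the induction would go through for the stronger statement without it; in the paper's proof the total leaf weight is still $1$ regardless). The hypothesis is present only because it holds in the application and simplifies the exposition there.
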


\begin{proof}
Let $N\in\N_0$ be the distance from $r$ to any leaf of $\T$. For each $n\in\N_0$, we define $\VV_n$ as the set of vertices of $\T$ at distance $n$ from $r$. It is clear that a vertex $v\in\VV(\T)$ is a leaf of $\T$ if and only if $v\in \VV_N$.

We can recursively construct a function $h\: \VV(\T) \rightarrow \RR$ by setting $h(r)=1$, and for each $v\in \VV(\T)$, defining $h(v)= \frac{h(w)}{d^+(w)}$, where $w\in\VV(\T)$ is the unique vertex with $(w,v)\in\EE(\T)$. See Figure~\ref{figTreeSplit}.

\begin{figure}
    \centering
    \begin{overpic}
    [width=6cm, 
    tics=20]{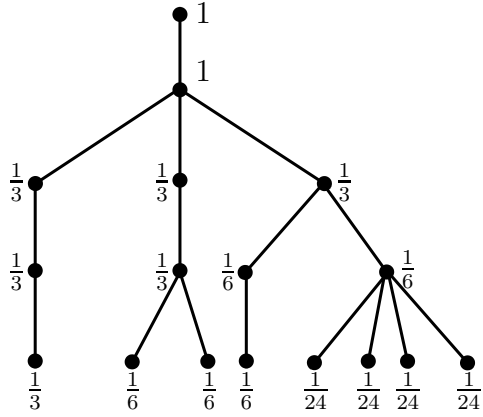}
    \put(64,132){$1$}
    \put(64,110){$1$}
    \put(-7,69){$\frac{1}{3}$}
    \put(-7,35){$\frac{1}{3}$}
    \put(0,-10){$\frac{1}{3}$}
    \put(48,69){$\frac{1}{3}$}
    \put(48,35){$\frac{1}{3}$}
    \put(37,-10){$\frac{1}{6}$}
    \put(66,-10){$\frac{1}{6}$}
    \put(117,69){$\frac{1}{3}$}
    \put(80,-10){$\frac{1}{6}$}
    \put(73,35){$\frac{1}{6}$}
    \put(141,36){$\frac{1}{6}$}
    \put(104,-10){$\frac{1}{24}$}
    \put(124,-10){$\frac{1}{24}$}    
    \put(139,-10){$\frac{1}{24}$}
    \put(162,-10){$\frac{1}{24}$}
    \end{overpic}
    \caption{The function $h$ for a finite rooted tree.}
    \label{figTreeSplit}
\end{figure}

By the two properties in the hypothesis, we have $h(v) \geq c^{-k}$ for each leaf $v \in \VV(\T)$ of $\T$. On the other hand, it is easy to see from induction that $\sum\limits_{w\in \VV_n} h(w) = 1$ for each $n\in\{0,1,\dots,N\}$. In particular, we have $\sum\limits_{w\in \VV_N} h(w) = 1$. Thus $\card \VV_N \leq c^k$. Therefore, the number of leaves of $\T$ is at most $c^k$.
\end{proof}

\subsection{Proof of Theorem~\ref{thmWeakExpansion}}   \label{subsctProofWeakExp}

We split Theorem~\ref{thmWeakExpansion} into three parts and prove each one separately here.

\begin{theorem}    \label{thmAsympHExpWOPC}
An expanding Thurston map $f\: S^2 \rightarrow S^2$ with no periodic critical points is asymptotically $h$-expansive.
\end{theorem}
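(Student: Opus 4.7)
The plan is to show that the topological tail entropy $h^*(f)=0$; by Definition~\ref{defAsympHExp} this is equivalent to the conclusion. Apply Lemma~\ref{lmCexistsL} to pass to an iterate $F=f^N$ admitting an $F$-invariant Jordan curve $\CC\supseteq\post f$ with no $1$-tile of $\X^1(F,\CC)$ joining opposite sides. Using~(\ref{eqH*gn=nH*g}) and the preservation of the no-periodic-critical-points hypothesis under iteration, it suffices to show $h^*(F)=0$, and I rename $F$ as $f$, placing us in the setting of Lemmas~\ref{lmTileInFlower} and~\ref{lmCoverByFlowers}. As refining sequences of open covers I will take $\eta_m:=\W^m$ and $\xi_l$ a finite open cover by balls of radius $\epsilon_l\to 0$; by Lemma~\ref{lmCellBoundsBM} and the Lebesgue number lemma both have mesh tending to $0$ and may be replaced (via monotone joins) by refining sequences without affecting the value of $h^*$.

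A typical set $A\in\eta_m^n$ takes the form $A=\bigcap_{j=0}^{n-1}f^{-j}(W^m(v_j))$, and Lemma~\ref{lmCoverByFlowers} covers $A$ by $(m+n-1)$-flowers indexed by $x\in E_m(v_0,\dots,v_{n-2};v_{n-1})$, reducing the task to bounding $|E_m|$. I encode $E_m$ as the leaves of a finite rooted tree $\T$ whose root is $v_{n-1}$ and whose depth-$k$ children of a node $y$ are the preimages of $y$ under $f$ that lie in $\overline{W}^m(v_{n-1-k})$. Every outdegree is at most $\deg f$, and by Lemma~\ref{lmLocInjectAwayFromCrit} a node has outdegree $\le 1$ whenever the corresponding $v_{n-1-k}\in\V^m$ lies outside the $r_m$-neighborhood of $\crit f$, where $r_m:=\tau(C\Lambda^{-m})+C\Lambda^{-m}\to 0$. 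Applying Lemma~\ref{lmTree} then reduces the bound on $|E_m|$ to bounding, uniformly over $x\in E_m$, the number of indices $j\in\{0,\dots,n-2\}$ with $f^j(x)\in N_d^{2r_m}(\crit f)$.

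This uniform visit estimate is the heart of the proof and the main obstacle; it is precisely where the absence of periodic critical points is used. Since each critical orbit eventually enters a periodic cycle disjoint from $\crit f$, the quantities $R:=\max_{c\in\crit f}|\{k\ge 0:f^k(c)\in\crit f\}|$ and $\alpha:=\min\{d(f^k(c),\crit f):c\in\crit f,\,k\ge 0,\,f^k(c)\notin\crit f\}$ are finite and strictly positive respectively. With $L_f$ the Lipschitz constant from Lemma~\ref{lmLipschitz}, a shadowing argument will show that if $f^{j_1}(x),f^{j_2}(x)\in N_d^\delta(\crit f)$ with $0<j_2-j_1\le T_\delta:=\lfloor\log(\alpha/((L_f+1)\delta))/\log L_f\rfloor$, then $j_2-j_1$ must lie in $\{k:f^k(c_1)\in\crit f\}$ for the critical point $c_1$ near $f^{j_1}(x)$; hence any window of length $T_\delta$ contains at most $R$ visits. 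Applied with $\delta=2r_m$, this yields $|E_m|\le(\deg f)^{\kappa(m)n+R}$ with $\kappa(m):=R/T_{2r_m}\to 0$ as $m\to\infty$. Finally, Remark~\ref{rmFlower} gives $f^j(W^{m+n-1}(x))=W^{m+n-1-j}(f^j(x))$, whose diameter falls below $\epsilon_l$ for all but $O_l(1)$ indices $j$, so each $(m+n-1)$-flower is covered by a number of $\xi_l^n$-sets that is uniformly bounded in $n$. Combining these two bounds gives $\tfrac{1}{n}H(\xi_l^n\mid\eta_m^n)\le\kappa(m)\log\deg f+O(1/n)$, and the triple limit yields $h^*(f)\le\lim_{m\to\infty}\kappa(m)\log\deg f=0$, as desired.
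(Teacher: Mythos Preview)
Your proposal is correct and follows essentially the same architecture as the paper's proof: the reduction to an iterate with an invariant curve via Lemma~\ref{lmCexistsL} and (\ref{eqH*gn=nH*g}), the use of flower covers $\W^m$ for the coarse refining sequence, the covering of a typical $A\in(\W^m)^n_f$ by $(m{+}n{-}1)$-flowers through Lemma~\ref{lmCoverByFlowers}, the tree encoding of $E_m$, and the application of Lemma~\ref{lmTree} after a visit-frequency bound obtained by shadowing critical orbits with Lemmas~\ref{lmLocInjectAwayFromCrit} and~\ref{lmLipschitz}. Your constants $R$ and $T_\delta$ play exactly the roles of the paper's $N_c$ and $M_m$.

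The one noteworthy deviation is the final covering step. The paper takes $\xi_l=\W^l$, passes from $(m{+}n{-}1)$-flowers to $(m{+}n{-}1)$-tiles (invoking the bound $W_f<\infty$, which is a \emph{second} use of the no-periodic-critical-points hypothesis), then to $(l{+}n)$-tiles, and finally uses Lemma~\ref{lmTileInFlower}(ii) to place each $(l{+}n)$-tile inside a single element of $(\W^l)^n_f$. Your route---using arbitrary small balls for $\xi_l$ and the diameter bound $\diam_d\bigl(f^j(W^{m+n-1}(x))\bigr)=\diam_d\bigl(W^{m+n-1-j}(f^j(x))\bigr)\le 2C\Lambda^{-(m+n-1-j)}$ from Remark~\ref{rmFlower} and Lemma~\ref{lmCellBoundsBM}---is a clean alternative that confines the use of the hypothesis to the visit estimate alone. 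Both approaches yield the same $\kappa(m)\to 0$ conclusion.
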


\begin{proof}
We need to show $h^*(f)=0$. By (\ref{eqH*gn=nH*g}), it suffices to prove that $f^i$ is asymptotically $h$-expansive for some $i\in\N$. Note that by (\ref{eqLocalDegreeProduct}), $f^i$ has no periodic critical points for each $i\in\N$ if $f$ does not. Thus by Lemma~\ref{lmCexistsL}, we can assume, without loss of generality, that there exists a Jordan curve $\CC\subseteq S^2$ containing $\post f$ such that $f(\CC)\subseteq\CC$, and no $1$-tile joins opposite sides of $\CC$. We consider the cell decompositions of $S^2$ induced by $f$ and $\CC$ in this proof.

Recall that $\W^i$ defined in (\ref{defSetNFlower}) denotes the set of all $i$-flowers $W^i(p)$, $p\in\V^i$, for each $i\in\N_0$.

Since $f$ is expanding, it is easy to see from Lemma~\ref{lmCellBoundsBM}, Proposition~\ref{propCellDecomp}, and the Lebesgue Number Lemma (\cite[Lemma~27.5]{Mu00}) that $\{ \W^i \}_{i\in\N_0}$ forms a refining sequence of open covers of $S^2$ (see Definition~\ref{defRefSeqOpenCover}). Thus it suffices to prove that 
\begin{align}   \label{eqTailEntropy=0}
 & h^*(f) = \\
 &  \lim\limits_{m\to+\infty}  \lim\limits_{l\to+\infty} \lim\limits_{n\to+\infty}  \frac{1}{n} H \( \bigvee\limits_{i=0}^{n-1} f^{-i}\(\W^l\) \Bigg| \bigvee\limits_{j=0}^{n-1} f^{-j}\(\W^m\)  \) = 0. \notag
\end{align}
See (\ref{eqH(xi|eta)}) for the definition of $H$.

\smallskip

We now fix arbitrary $n,m,l\in N$ that satisfy $m+n>l>m$.

The plan for the proof is the following. We will first obtain an upper bound for the number of $(m+n-1)$-flowers needed to cover each element $A$ in the cover $\bigvee\limits_{j=0}^{n-1} f^{-j}\(\W^m\)$ of $S^2$. By Lemma~\ref{lmCoverByFlowers}, it suffices to find an upper bound for $\card E_m(p_0,p_1,\dots,p_{n-2};p_{n-1})$ for $p_0,p_1,\dots,p_{n-1}\in\V^m$. We identify $E_m(p_0,p_1,\dots,p_{n-2};p_{n-1})$ with the set of leaves of a certain rooted tree. By Lemma~\ref{lmTree}, we will only need to bound the number of vertices with more than one child in each path connecting the root with some leave. This can be achieved after one observes that for an expanding Thurston map with no periodic critical points, the frequency for an orbit getting near the set of critical points is bounded from above. After this main step, we will then find an upper bound for the number of $(l+n)$-tiles needed to cover $A$. By observing that each $(l+n)$-tile is a subset of some element in $\bigvee\limits_{j=0}^{n-1} f^{-j}\(\W^l\)$, we will finally obtain a suitable upper bound for $H \( \bigvee\limits_{i=0}^{n-1} f^{-i}\(\W^l\) \Bigg| \bigvee\limits_{j=0}^{n-1} f^{-j}\(\W^m\)  \)$ which leads to (\ref{eqTailEntropy=0}).

\smallskip

Let $A\in \bigvee\limits_{j=0}^{n-1} f^{-j}(\W^m)$, say
\begin{equation}   \label{eqA_WOPC}
A= \bigcap\limits_{i=0}^{n-1}  f^{-i}(W^m(p_i))
\end{equation}
where $p_0,p_1,\dots,p_{n-1} \in \V^m$. By Lemma~\ref{lmCoverByFlowers},
\begin{equation}
A \subseteq  \bigcup\limits_{x\in E_m(p_0,p_1,\dots,p_{n-2};p_{n-1})}  W^{m+n-1}(x),
\end{equation}
where $E_m$ is defined in (\ref{eqDefEm}).

We can construct a rooted tree $\T$ from $E_m(p_0,p_1,\dots,p_{n-2}; p_{n-1})$ as a simple directed graph. The set $\VV(\T)$ of vertices of  $\T$ is
\begin{equation*}
\VV(\T) = \bigcup\limits_{i=0}^{n-1}  \big\{ (f^i(x), n-1-i) \in S^2\times \N_0 \,\big|\, x\in E_m(p_0,p_1,\dots,p_{n-2}; p_{n-1})  \big\}.
\end{equation*}
Two vertices $(x,i),(y,j)\in \VV(\T)$ are connected by a directed edge $((x,i),(y,j)) \in \EE(\V)$ if and only if $f(y)=x$ and $j=i+1$. Clearly the simple directed graph $\T$ constructed this way is a finite rooted tree with root $(p_{n-1},0)\in\VV(\T)$.

Observe that if a vertex $(x,i)\in \VV(\T)$ is a leaf of $\T$, then $x\in f^{-n+1}(p_{n-1})$ and $i=n-1$.

\begin{figure}
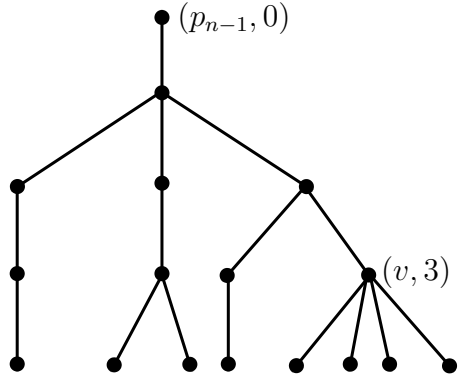
 
    \centering
    \begin{overpic}
    [width=6cm, 
    tics=20]{tree_eg.eps}
    \put(64,132){$(p_{n-1},0)$}
    \put(141,36){$(v,3)$}
    \end{overpic}
    \caption{An example of $\T$ with $n=5$ and $c(v,3)=4$.}
    \label{figTree_WOPC}
\end{figure}

For each $(x,i)\in\VV(\T)$, we write $c(x,i) = d^+((x,i))$, i.e.,
\begin{equation}   \label{eqDefC_NoChildren_WOPC}
c(x,i) = \card \{ (y,i+1) \in\VV(\T) \,|\, f(y)=x \}.
\end{equation}
We make the convention that for each $x\in S^2$ and each $i\in\Z$, if $(x,i)\notin \VV(\T)$, then $c(x,i)=-1$. See Figure~\ref{figTree_WOPC} for an example of $\T$.

Recall that by (\ref{eqDefEm}),
\begin{align*}
    & E_m(p_0,p_1,\dots,p_{n-2};p_{n-1}) \\
=   & \big\{ y\in f^{-n+1}(p_{n-1}) \,\big|\, f^i(y) \in \overline{W}^m(p_i),i\in\{0,1,\dots,n-2\}  \big\}.
\end{align*}
So if $(x,i)\in\VV(\T)$, then $c(x,i)$ is at most the number of distinct preimages of $x$ under $f$ contained in $\overline{W}^m(p_{i+1})$. Thus
\begin{equation}  \label{eqBoundForNoChildren_WOPC}
0\leq c(x,i) \leq \deg f  \text{ for } (x,i)\in\VV(\T).
\end{equation}

Fix a visual metric $d$ on $S^2$ for $f$ with expansion factor $\Lambda >1$. The map $f$ is Lipschitz with respect to $d$ (see Lemma~\ref{lmLipschitz}). Then there exists a constant $K\geq 1$ depending only on $f$ and $d$ such that $d(f(x),f(y)) \leq K d(x,y)$ for $x,y\in S^2$. We may assume that $K\geq 2$.

Define
$$
N_c=\max \{\min \{i\in\N \,|\, f^j(x) \notin \crit f \text{ if } j\geq i\} \,|\, x\in \crit f \}.
$$
The maximum is taken over a finite set of integers since $f$ has no periodic critical points. So $N_c\in\N$. Note that by definition, if $x\in\crit f$, then $f^i(x) \in \post f\setminus \crit f$ for each $i\geq N_c$. Denote the shortest distance between a critical point and the set $\post f\setminus \crit f$ by
$$
D_c=\min \{d(x,y) \,|\, x\in \post f\setminus \crit f, y\in \crit f \}.
$$
Then $D_c \in (0,+\infty)$ since both $\post f \setminus \crit f$ and $\crit f$ are nonempty finite sets.

\smallskip

We now proceed to find an upper bound for 
\begin{equation*}
\card \big\{ i\in \{0,1,\dots,n-1\} \,\big|\, c(f^i(z),n-1-i) \geq 2  \big\}
\end{equation*}
for each $(z,n-1)\in\VV(\T)$, uniform in $(z,n-1)$. Recall that $z\in f^{-n+1} (p_{n-1})$ for each $(z,n-1)\in\VV(\T)$. We fix such a point $z$.

\begin{figure}
    \centering
    \begin{overpic}
    [width=12cm, 
    tics=20]{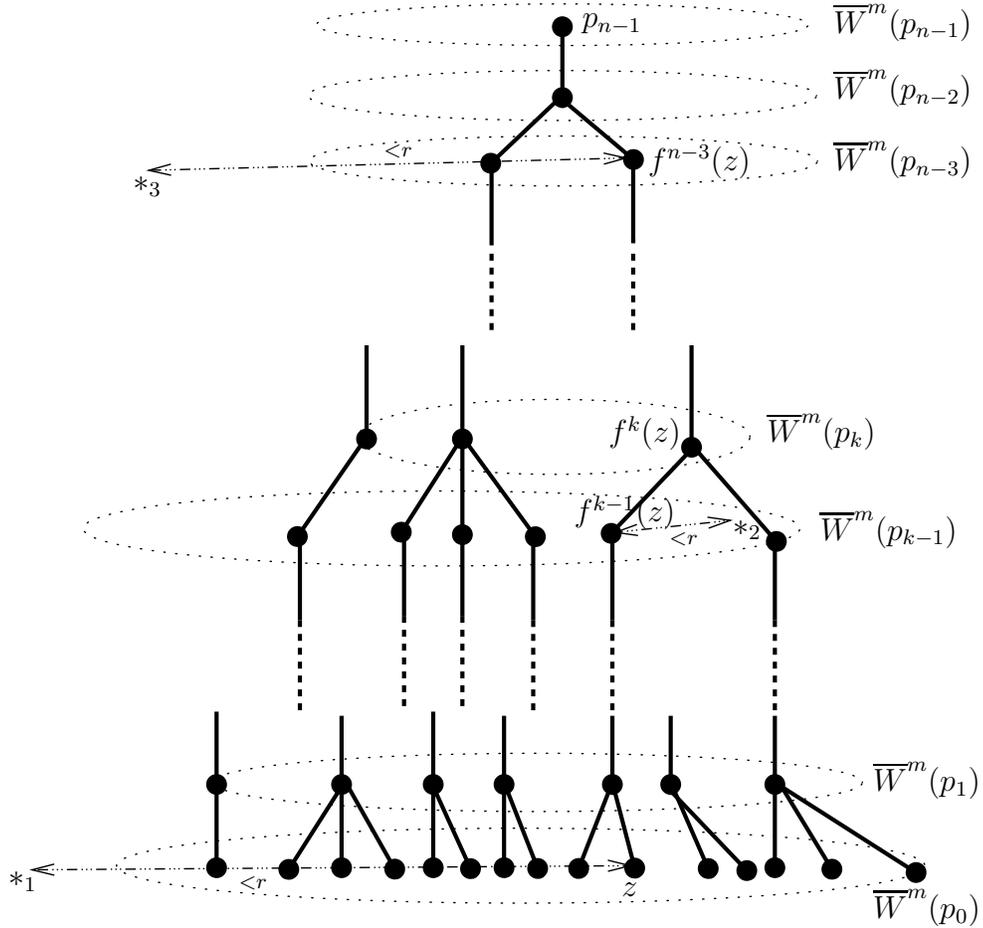}
    \put(210,332){$p_{n-1}$}
    \put(305,330){$\overline{W}^m(p_{n-1})$}
    \put(305,305){$\overline{W}^m(p_{n-2})$}
    \put(305,278){$\overline{W}^m(p_{n-3})$}
    \put(280,175){$\overline{W}^m(p_{k})$}
    \put(300,138){$\overline{W}^m(p_{k-1})$}
    \put(320,43){$\overline{W}^m(p_{1})$}
    \put(320,-5){$\overline{W}^m(p_{0})$} 
    \put(235,278){$f^{n-3}(z)$}  
    \put(220,175){$f^k(z)$}
    \put(207,145){$f^{k-1}(z)$}
    \put(225,3){$z$} 
    \put(40,270){$*_3$}
    \put(267,140){$*_2$}
    \put(-7,7){$*_1$}
    \put(80,8){$_{<r}$}
    \put(243,138){$_{<r}$}
    \put(135,285){$_{<r}$}
    \end{overpic}
    \caption{$*_1,*_2,*_3\in\crit f$, $r=\tau(3C\Lambda^{-m})$, and $c(f^k(z),n-1-i)=2$.}
    \label{figLocInjectAwayFromCrit_WOPC}
\end{figure}

In order to find an upper bound, we first define, for each $i\in\N$ sufficiently large,
\begin{equation}   \label{eqDefMm_WOPC}
M_i = \bigg\lfloor \log_K \(\frac{D_c - \tau(3C\Lambda^{-i})}{\tau(3C\Lambda^{-i})}  \)   \bigg\rfloor  -2,
\end{equation}
where the function $\tau$ is from Lemma~\ref{lmLocInjectAwayFromCrit}, and $C\geq 1$ is a constant depending only on $f$, $\CC$, and $d$ from Lemma~\ref{lmCellBoundsBM}. Note that $\tau(3C\Lambda^{-i})\longrightarrow 0$ as $i\longrightarrow+\infty$ (Lemma~\ref{lmLocInjectAwayFromCrit}), thus $M_i$ is well-defined for $i$ sufficiently large, and
\begin{equation}    \label{eqM_mToInfty_WOPC}
\lim\limits_{i\to+\infty} M_i = +\infty.
\end{equation}

We assume that $m$ is sufficiently large such that the following conditions are both satisfied:
\begin{enumerate}
\smallskip
\item[(i)] $m > \log_\Lambda \big(\frac{3C}{\delta_0}\big)$,

\smallskip
\item[(ii)] $M_m > N_c$,
\end{enumerate}
where $\delta_0\in(0,1]$ is a constant that depends only on $f$ and $d$ from Lemma~\ref{lmLocInjectAwayFromCrit}. Note that by Lemma~\ref{lmCellBoundsBM}, each $m$-flower is of diameter at most $2C\Lambda^{-m}$. Thus condition~(i) implies that for each $v\in\V^m$, each pair of points $x,y \in \overline{W}^m(v)$ satisfy $d(x,y)< 3C\Lambda^{-m} < \delta_0$.

Fix $k\in\{0,1,\dots,n-1\}$ with $c\(f^k(z),n-1-k\) \geq 2$. Then $k\neq 0$ and the number of distinct points in $\overline{W}^m(p_{k-1})$ that are mapped to $f^k(z)$ under $f$ is at least $c\(f^k(z),n-1-k\) \geq 2$. Thus $f$ is not injective on $\overline{W}^m(p_{k-1})$. See Figure~\ref{figLocInjectAwayFromCrit_WOPC}. By Lemma~\ref{lmCellBoundsBM}, $\diam_d\(\overline{W}^m(p_{k-1})\)\leq 2 C\Lambda^{-m}$. Since $f^{k-1}(z)\in \overline{W}^m(p_{k-1})$, the map $f$ is not injective on $B_d\(f^{k-1}(z),3C\Lambda^{-m}\)$. Then since $3C\Lambda^{-m}<\delta_0$, by Lemma~\ref{lmLocInjectAwayFromCrit},
\begin{equation*}
d\(f^{k-1}(z),\crit f\) < \tau\(3C\Lambda^{-m}\).
\end{equation*}
Choose $w\in\crit f$ that satisfies
$
d\(f^{k-1}(z),w\) < \tau\(3C\Lambda^{-m}\).
$
Then for each $j\in \N_0$,
\begin{equation}     \label{eqPfThmAsympHExp1_WOPC}
d\(f^{k+j-1}(z),f^j(w) \) < K^j \tau\(3C\Lambda^{-m}\).
\end{equation}

We will show that in the sequence $f^k(z),f^{k+1}(z),\dots,f^{k+M_m}(z)$, the number of terms $f^{k+j}(z)$, $0\leq j\leq M_m$, for which the vertex 
$$
\(f^{k+j}(z), n-1-k-j\) \in \VV(\T)
$$
has at least two children is bounded above by $N_c$, i.e.,
\begin{equation}     \label{eqNcEveryMm_WOPC}
\card \big\{ j\in\{0,1,\dots,M_m\}  \,\big|\, c\(f^{k+j}(z), n-1-k-j\)\geq 2 \big\} \leq N_c.
\end{equation}
Note that $M_m$ is defined in (\ref{eqDefMm_WOPC}). Here we use the convention that for each $x\in S^2$ and each $i\in \Z$, if $(x,i)\notin \VV(\T)$, then $c(x,i)=-1$.

Indeed, for each $j\in\{N_c,N_c +1,\dots,\min\{M_m,n-1-k\}\}$, we have $f^j(w)\in \post f\setminus \crit f$. Note that here $M_m>N_c$ by condition~(ii) on $m$. Thus by (\ref{eqPfThmAsympHExp1_WOPC}) and (\ref{eqDefMm_WOPC}),
\begin{align*}
d\(f^{k+j-1}(z),\crit f \)  & \geq d\(\crit f, f^j(w)\) - d\( f^j(w), f^{k+j-1}(z) \)\\
                            & \geq D_c - K^j \tau( 3C\Lambda^{-m})   \\
                            & \geq D_c - K^{M_m} \tau( 3C\Lambda^{-m})  \\
                            & \geq D_c - \( \frac{D_c - \tau(3C\Lambda^{-m})}{\tau(3C\Lambda^{-m})} \) \tau(3C\Lambda^{-m} )\\
                            & =  \tau(3C\Lambda^{-m} ).
\end{align*}
Hence by Lemma~\ref{lmLocInjectAwayFromCrit}, the restriction of $f$ to $B_d\(f^{k+j-1}(z), 3C\Lambda^{-m}\)$ is injective. Note that $f^{k+j-1}(z) \in \overline{W}^m(p_{k+j-1})$, and by Lemma~\ref{lmCellBoundsBM}, $\diam_d \(\overline{W}^m(p_{k+j-1}) \) \leq 2C\Lambda^{-m}$. So $f$ is injective on $\overline{W}^m(p_{k+j-1})$. Thus 
$$
c\(f^{k+j}(z), n-1-k-j\) = 1
$$
for each $j\in\{N_c,N_c +1,\dots,\min\{M_m,n-1-k\}\}$. Hence 
$$
c\(f^{k+j}(z), n-1-i-j\) \in \{1,-1\}
$$
for each $j\in\{N_c,N_c +1,\dots,M_m\}$. Then (\ref{eqNcEveryMm_WOPC}) holds.

Thus we get that
\begin{equation}  \label{eqSplitNoBound_WOPC}
\card \big\{ i\in \{0,1,\dots,n-1\} \,\big|\, c(f^i(z),n-1-i) \geq 2  \big\}  \leq N_c \bigg\lceil \frac{n}{M_m} \bigg\rceil
\end{equation}
for each $(z,n-1)\in\VV(\T)$.

Hence by (\ref{eqSplitNoBound_WOPC}), (\ref{eqBoundForNoChildren_WOPC}), and Lemma~\ref{lmTree}, we can conclude that the number of leaves of $\T$ is at most $(\deg f)^{N_c  \(\frac{n}{M_m} +1\)}$, or equivalently,
\begin{equation}
\card E_m(p_0,p_1,\dots,p_{n-2};p_{n-1}) \leq (\deg f)^{N_c  \(\frac{n}{M_m} +1\)}.
\end{equation}

We have obtained an upper bound for the number of $(m+n-1)$-flowers needed to cover $A$. Next, we will find an upper bound for the number of $(m+n-1)$-tiles, and consequently, an upper bound for the number of $(l+n)$-tiles, needed to cover $A$.

Denote the maximum number of $i$-tiles contained in the closure of any $i$-flower, over all $i\in\N_0$, by $W_f$, i.e.,
\begin{equation*}
W_f = \sup \big\{ \card \big\{ X^i\in\X^i \,\big|\, X^i\subseteq \overline{W}^j(v) \big\} \,\big|\, j\in\N_0,v\in\V^j  \big\}.
\end{equation*}
Observe that $W_f = \sup \{ 2 \deg_{f^i}(v) \,|\, i\in\N_0, \, v\in\V^i   \}$. Since $f$ has no periodic critical points, it follows from \cite[Lemma~17.1]{BM10} that $W_f$ is a finite number that only depends on $f$.

Thus we can cover $A$ in (\ref{eqA_WOPC}) by a collection of $(m+n-1)$-tiles of cardinality at most $W_f (\deg f)^{N_c  \(\frac{n}{M_m} +1\)}$.

\smallskip
On the other hand, we claim that each $(l+n)$-tile $X^{l+n}\in \X^{l+n}$ is a subset of at least one element in the open cover $\bigvee\limits_{i=0}^{n-1} f^{-i} (\W^l)$ of $S^2$. To prove the claim, we first fix an $(l+n)$-tile $X^{l+n} \in \X^{l+n}$. By Proposition~\ref{propCellDecomp}(ii) and Lemma~\ref{lmTileInFlower}(ii), for each $i\in\{0,1,\dots,n-1\}$, there exists an $l$-vertex $v_i\in \V^l$ such that $f^i\(X^{l+n}\) \subseteq W^l(v_i)$. Thus
$$
X^{l+n} \subseteq \bigcap\limits_{i=0}^{n-1} f^{-i} \( W^l(v_i)\).
$$
The proof for the claim is complete.

\smallskip

Note that for each $(m+n-1)$-tile $X^{m+n-1}\in \X^{m+n-1}$, the collection
$$
\big\{  X^{l+n} \in \X^{l+n} \,\big|\, X^{l+n}\subseteq X^{m+n-1} \big\}
$$
forms a cover of $X^{m+n-1}$, and has cardinality at most $(2\deg f)^{l-m+1}$, which follows immediately from Proposition~\ref{propCellDecomp}.  

Hence, we get that for each element $A$ of $\bigvee\limits_{j=0}^{n-1} f^{-j} (\W^m)$, we can find a cover of $A$ consisting of elements of $\bigvee\limits_{i=0}^{n-1} f^{-i} (\W^l)$ in such a way that the cardinality of the cover is at most $(2\deg f)^{l-m+1} W_f (\deg f)^{N_c  \(\frac{n}{M_m} +1\)}$. 

We conclude that
\begin{align*}
h^*(f) & = \lim\limits_{m\to+\infty} \lim\limits_{l\to+\infty} \lim\limits_{n\to+\infty} \frac{1}{n} \log \( (2\deg f)^{l-m+1} W_f (\deg f)^{N_c  \(\frac{n}{M_m} +1\)} \)  \\
       & = \lim\limits_{m\to+\infty} \lim\limits_{l\to+\infty} \lim\limits_{n\to+\infty} \frac{1}{n} N_c  \(\frac{n}{M_m} +1\) \log(\deg f) \\
       & = \lim\limits_{m\to+\infty} \frac{N_c \log(\deg f)}{M_m} \\
       & = 0.
\end{align*}
The last equality follows from (\ref{eqM_mToInfty_WOPC}). 
\end{proof}

Recall that a point $x\in S^2$ is a periodic point of $f\: S^2\rightarrow S^2$ with period $n$ if $f^n(x)=x$ and $f^i(x)\neq x$ for each $i\in\{1,2,\dots,n-1\}$.

\begin{theorem}    \label{thmNotAsympHExp}
An expanding Thurston map $f\: S^2\rightarrow S^2$ with at least one periodic critical point is not asymptotically $h$-expansive.
\end{theorem}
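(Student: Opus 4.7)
The plan is to establish $h^*(f) > 0$ by exhibiting, for each sufficiently large $m$ and each $l \gg m$, a specific element $A_n$ of the cover $\bigvee_{j=0}^{n-1} f^{-j}(\W^m)$ whose minimal covering cardinality from $\bigvee_{i=0}^{n-1} f^{-i}(\W^l)$ grows exponentially in $n$. First I reduce to a convenient form of $f$: by (\ref{eqH*gn=nH*g}) it suffices to prove $h^*(f^k) > 0$ for some $k \in \N$, and the hypothesis gives a periodic critical point $p$ of some period $N_0$ with $\deg_{f^{N_0}}(p) \geq 2$ by (\ref{eqLocalDegreeProduct}). Combining with Lemma~\ref{lmCexistsL} applied to a further iterate, I may assume that $p$ is a critical fixed point of $f$ with local degree $d := \deg_f(p) \geq 2$ and that there is a Jordan curve $\CC \supseteq \post f$ with $f(\CC) \subseteq \CC$ such that no $1$-tile in $\X^1(f,\CC)$ joins opposite sides of $\CC$. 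All cell decompositions below are with respect to $(f,\CC)$.

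For the construction I fix large $m$ and $l > m$ with $K := l - m$ large, and using Lemma~\ref{lmPreImageDense}, I select a backward orbit $p = u_0, u_1, \ldots, u_N$ (so $f(u_{j+1}) = u_j$) such that $u_N \in \overline{W}^{m+K}(p) \setminus \{p\}$, with $N$ chosen minimally. I then define the periodic sequence $\{v_i\}_{i \in \N}$ of $m$-vertices of period $T := N + K$ by setting $v_i := p$ for $1 \leq i \leq K$, and letting $v_i$ for $K + 1 \leq i \leq T$ be any $m$-vertex with $u_{i-K} \in \overline{W}^m(v_i)$. For $n = qT$ I then consider
\[
A_n := \bigcap_{j=0}^{n-1} f^{-j}\bigl(W^m(v_{n-j})\bigr),
\]
which is nonempty because $u_N \in A_n$: its forward orbit realizes the prescribed itinerary in the first period and then settles at the fixed point $p$ for all subsequent periods.

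The crucial observation is that each period of $\{v_i\}$ contributes a multiplicative branching factor of $d^K$ toward the covering number of $A_n$. During a length-$K$ constant-at-$p$ block, the inverse image of a generic point under $f^K$ admits exactly $d^K$ distinct local preimages in $\overline{W}^{m+K}(p)$, arranged (by Remark~\ref{rmFlower} and the local model $z \mapsto z^d$) as a $d^K$-fold angularly equidistributed pattern. The choice $K = l - m$, combined with Lemma~\ref{lmCellBoundsBM} and Lemma~\ref{lmTileInFlower}(iii), guarantees that these preimages lie in $d^K$ pairwise disjoint $l$-flowers. The subsequent backward-orbit prefix transports this separation through the composition without collapsing distinct branches, by the bounded-distortion estimate of Lemma~\ref{lmMetricDistortion}. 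Chaining over $q$ periods produces at least $d^{qK}$ pairwise $(n, l)$-distinguishable points in $A_n$, so
\[
\frac{1}{n} H\!\left(\bigvee_{i=0}^{n-1} f^{-i}(\W^l)\,\bigg|\,\bigvee_{j=0}^{n-1} f^{-j}(\W^m)\right) \;\geq\; \frac{K \log d}{N + K}.
\]
Since $N$ can be chosen of the same order as $K$ (preimages of $p$ reach $\overline{W}^{m+K}(p)$ once $N \gtrsim K$), this ratio is bounded below by a positive constant independent of $l$, and taking the triple limit in the definition of $h^*$ yields $h^*(f) > 0$, contradicting asymptotic $h$-expansiveness.

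The main obstacle is the rigorous verification of the $d^K$-fold branching claim above: one must check that the $d^K$ preimages persist as pairwise distinguishable $(n, l)$-itineraries after being transported through the backward-orbit prefix of the next period, and that a uniform choice of $N$ of order $K$ is indeed available. The length of the constant-at-$p$ block being arbitrarily large is essential here; if $f$ had no periodic critical points, then by Lemma~\ref{lmLocInjectAwayFromCrit} (as exploited in the proof of Theorem~\ref{thmAsympHExpWOPC}) the frequency of visits of an orbit of length $n$ to a small neighborhood of $\crit f$ is $o(n)$, which would prevent this branching mechanism from accumulating to exponential order.
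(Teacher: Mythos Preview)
Your overall strategy---a periodic sequence of $m$-vertices containing a long constant-at-$p$ block to force branching---is exactly the paper's, and you have correctly identified that the constant block's arbitrary length is what fails without a periodic critical point. But several steps are genuinely incomplete or incorrect.

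First, the claim that $u_N \in A_n$ for $n = qT$ with $q \geq 2$ is false as stated. Once $j > N$ the forward orbit of $u_N$ sits at $p$, but the itinerary at time $j = T + s$ (for $1 \leq s \leq N-1$) demands membership in $W^m(v_{T-s})$, and $v_{T-s}$ was chosen near $u_{N-s}$. Since you selected the backward orbit via Lemma~\ref{lmPreImageDense} with no control on the intermediate points, $u_{N-s}$ can lie anywhere on $S^2$, so $p \notin W^m(v_{T-s})$ in general. The paper avoids this by constructing its backward orbit $q_1,\dots,q_m$ \emph{inside} $W^{j-1}(p)\setminus W^{j+1}(p)$ (so every $q_j$ stays in $W^m(p)$), using that $f(W^i(p)) = W^{i-1}(p)$ rather than generic density; this is not a cosmetic choice but what makes the periodic itinerary self-consistent.

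Second, the separation claim---that the $d^K$ branches remain pairwise $(n,l)$-distinguishable through subsequent periods---is the entire content of the proof, and your appeal to the metric distortion Lemma~\ref{lmMetricDistortion} is the wrong tool. Metric distortion on tiles tells you nothing about whether two preimages land in the same $l$-flower at some intermediate step. The paper handles this combinatorially: it builds finite sets $V_n \subseteq A_n$ recursively and proves by induction a separation property (their property~(4)) stating that no element of $\bigvee_{i=0}^{n-1} f^{-i}(\W^l)$ contains two points of $V_n$. The inductive step at the branching moments uses Lemma~\ref{lmTileInFlower}(iii) to show that the children lie in pairwise disjoint $(l+2)$-flowers around distinct $(r+1)$-vertices $q_{r+1}^i$, and that any $l$-flower can contain at most one such child; no metric estimate enters. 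Without this combinatorial bookkeeping you cannot rule out that two branches collapse into the same $l$-flower during the prefix.

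Third, the claim that $N$ can be taken ``of the same order as $K$'' is asserted but not argued; in the paper the non-branching part of each period has length $m+3$, which is bounded \emph{independently of $l$}, yielding the clean bound $h^*(f)\geq \log k$. Your looser $N \sim K$ would still give $h^*(f) > 0$ if everything else worked, but you have not justified it.
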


\begin{proof}
We need to show $h^*(f)>0$. By (\ref{eqH*gn=nH*g}), it suffices to prove that $f^i$ is not asymptotically $h$-expansive for some $i\in\N$. Note that by (\ref{eqLocalDegreeProduct}), if a point $x\in S^2$ is a periodic critical point of $f^i$ for some $i\in\N$, then it is a periodic point of $f$ and there exists $j\in\N_0$ such that $f^j(x)$ is a periodic critical point of $f$. Thus each periodic critical point of $f^\tau$ is a fixed point of $f^\tau$ if $\tau\in\N$ is a common multiple of the periods of all the periodic critical points of $f$. Hence by Lemma~\ref{lmCexistsL}, we can assume, without loss of generality, that there exists a Jordan curve $\CC\subseteq S^2$ containing $\post f$ such that $f(\CC)\subseteq\CC$, and no $1$-tile joins opposite sides of $\CC$, and each periodic critical point of $f$ is a fixed point of $f$.

Let $p$ be a critical point of $f$ that is fixed by $f$. 

In addition, we can assume, without loss of generality, that $f^{-1}(p)\setminus \CC \neq \emptyset$. Indeed, by Lemma~\ref{lmPreImageDense}, there exists $j\in\N$ such that $f^{-j}(p)\setminus \CC \neq \emptyset$. We replace $f$ by $f^j$, and observe that by (\ref{eqLocalDegreeProduct}) and the fact that each periodic critical point of $f$ is a fixed point of $f$, the set of periodic critical points of $f$ and that of $f^j$ coincide. Note that for the new map and its invariant curve $\CC$, no $1$-tile joins opposite sides of $\CC$, and each periodic critical point is a fixed point.

From now on, we consider the cell decompositions of $S^2$ induced by $f$ and $\CC$ in this proof.

\smallskip

Recall that for $i\in\N_0$, we denote by $\W^i$ as in (\ref{defSetNFlower}) the set of all $i$-flowers $W^i(p)$ where $p\in\V^i$.

Since $f$ is expanding, it is easy to see from Lemma~\ref{lmCellBoundsBM}, Proposition~\ref{propCellDecomp}, and the Lebesgue Number Lemma (\cite[Lemma~27.5]{Mu00}) that $\{ \W^i \}_{i\in\N_0}$ forms a refining sequence of open covers of $S^2$ (see Definition~\ref{defRefSeqOpenCover}). Thus it suffices to prove that 
\begin{equation*}
h^*(f) = \lim\limits_{m\to+\infty}  \lim\limits_{l\to+\infty} \lim\limits_{n\to+\infty}  \frac{1}{n} H \( \bigvee\limits_{i=0}^{n-1} f^{-i}\(\W^l\) \Bigg| \bigvee\limits_{j=0}^{n-1} f^{-j}\(\W^m\)  \) > 0.
\end{equation*}
See (\ref{eqH(xi|eta)}) for the definition of $H$.

Our plan is to construct a sequence $\{v_i\}_{i\in\N}$ of $m$-vertices such that for each $n\in\N$, the number of elements in $\bigvee\limits_{i=0}^{n-1} f^{-i}\(\W^l\)$ needed to cover $B_n=\bigcap\limits_{j=0}^{n-1} f^{-j} (W^m(v_{n-j}))$ can be bounded from below in such a way that $h^*(f)>0$ follows immediately. More precisely, we observe that the more connected components $B_n$ has, the harder to cover $B_n$. So we will choose $\{v_i\}_{i\in\N}$ as a periodic sequence of $m$-vertices shadowing an infinite backward pseudo-orbit under iterations of $f$ in such a way that each period of $\{v_i\}_{i\in\N}$ begins with a backward orbit starting at $p$ and approaching $p$ as the index $i$ increases, and then ends with a constant sequence staying at $p$. By a recursive construction, we keep track of each $B_n$ by a finite subset $V_n\subseteq B_n$ with the property that $\card(A\cap V_n)\leq 1$ for each $A \in \bigvee\limits_{i=0}^{n-1} f^{-i}\(\W^l\)$. A quantitative control of the size of $V_n$ leads to the conclusion that $h^*(f)>0$. The fact that the constant part of each period of $\{v_i\}_{i\in\N}$ can be made arbitrarily long is essential here and is not true if $f$ has no periodic critical points.

\smallskip

For this we fix $m,l\in\N$ with $l>m+100$.

Let $k=\deg_f(p)$. Then $k>1$.

Define $q_0 = p$ and choose $q_1\in f^{-1}(p)\setminus \CC$. Then $q_1$ is necessarily a $1$-vertex, but not a $0$-vertex, i.e., $q_1\in\V^1\setminus \V^0$. Since $q_1\notin \CC$, we have $q_1\in W^0(p)$. By (\ref{defFlower}), the only $2$-vertex contained in $W^2(p)$ is $p$. So $q_1\in W^0(p)\setminus W^2(p)$. Since $f\(W^i(p)\) = W^{i-1}(p)$ for each $i\in\N$ (see Remark~\ref{rmFlower}), we can recursively choose  $q_j\in\V^j$ for $j\in\{2,3,\dots,m\}$ such that
\begin{enumerate}
\smallskip
\item[(i)] $f(q_j) = q_{j-1}$,

\smallskip
\item[(ii)] $q_j\in W^{j-1}(p) \setminus W^{j+1}(p)$.
\end{enumerate}
We define a singleton set $Q_j = \{ q_j \}$.

We set $q_m^1=q_m$.

Next, we choose recursively, for each $j\in\{m+1,m+2,\dots, l-2\}$, a set $Q_j$ with $\card Q_j = k^{j-m}$ consisting of distinct points $q_j^i\in\V^j$, $i\in\{1,2,\dots,k^{j-m}\}$, such that
\begin{enumerate}
\smallskip
\item[(i)] $f(Q_j)= Q_{j-1}$,

\smallskip 
\item[(ii)] $Q_j \subseteq W^{j-1}(p)\setminus W^{j+1}(p)$.
\end{enumerate}
Note by Remark~\ref{rmFlower}, it is clear that these two properties uniquely determines $Q_j$ from $Q_{j-1}$.

Finally, we construct recursively, for $j\in\{l-1,l,l+1\}$, a set $Q_j$ with $\card Q_j = k^{l-2-m}$ consisting of distinct points $ q_j^i\in\V^j$, $i\in\big\{1,2,\dots, k^{l-2-m} \big\}$, such that
\begin{enumerate}
\smallskip
\item[(i)] $f(q_j^i)= q_{j-1}^i$,

\smallskip 
\item[(ii)] $Q_j \subseteq W^{j-1}(p)\setminus W^{j+1}(p)$.
\end{enumerate}

\smallskip

We will now construct recursively, for each $n=(l+1)s+r$, with $s\in\N_0$ and $r\in\{0,1,\dots,l\}$, an $m$-vertex $v_n\in\V^m$ and a set of $n$-vertices $V_n\subseteq \V^n$ such that the following properties are satisfied:
\begin{enumerate}
\smallskip
\item[(1)] $V_n\subseteq W^m(v_n)$ for $n\in \N_0$;

\smallskip
\item[(2)] $f\(V_n\) = V_{n-1}$ for $n\in\N$;

\smallskip
\item[(3)] For $s\in\N_0$, and
\begin{enumerate}

\smallskip
\item[(i)] for $r=0$, $V_{(l+1)s+r} \subseteq W^l(p)$,

\smallskip
\item[(ii)] for $r\in\{1,2,\dots, m\}$, $V_{(l+1)s+r} \subseteq W^{l+1}\(v_{(l+1)s+r}\)$,

\smallskip
\item[(iii)] for $r\in\{m+1,m+2,\dots, l-2\}$, there exists, for each $i\in\{1,2,\dots,k^{r-m}\}$, a subset $V^i_{(l+1)s+r}$ of $V_{(l+1)s+r}$ such that 
\begin{enumerate}
\smallskip
\item[(a)] $V^i_{(l+1)s+r} \cap V^j_{(l+1)s+r} = \emptyset$ for $1\leq i< j\leq k^{r-m}$,

\smallskip
\item[(b)] $\bigcup\limits_{i=1}^{k^{r-m}} V^i_{(l+1)s+r} = V_{(l+1)s+r}$,

\smallskip
\item[(c)] $V^i_{(l+1)s+r} \subseteq W^{l+1} \(q_r^i\)$,
\end{enumerate}

\smallskip
\item[(iv)] for $r\in\{l-1,l\}$, there exists, for each $i\in\{1,2,\dots,k^{l-2-m}\}$, a subset $V^i_{(l+1)s+r}$ of $V_{(l+1)s+r}$ such that 
\begin{enumerate}
\smallskip
\item[(a)] $V^i_{(l+1)s+r} \cap V^j_{(l+1)s+r} = \emptyset$ for $1\leq i< j\leq k^{l-2-m}$,

\smallskip
\item[(b)] $\bigcup\limits_{i=1}^{k^{l-2-m}} V^i_{(l+1)s+r} = V_{(l+1)s+r}$,

\smallskip
\item[(c)] $V^i_{(l+1)s+r} \subseteq W^{l+1} \(q_r^i\)$;
\end{enumerate}

\end{enumerate}

\smallskip
\item[(4)] for $n\in\N_0$, $A\in\bigvee\limits_{i=0}^{n-1} f^{-i}\(\W^l\)$, and $x,y\in V_n$ with $x\neq y$, we have $\{x,y\}\nsubseteq A$.
\end{enumerate}

\smallskip

We start our construction by first defining $v_n\in \V^m$ for each $n\in\N$. For $s\in\N_0$ and $r\in\{0,1,\dots, m\}$, set $v_{(l+1)s+r} = q_r$. For $s\in\N_0$ and $r\in\{m+1,m+2,\dots, l\}$, set $v_{(l+1)s+r} = p$.

We now define $V_n$ recursively.

Let $V_0= \{q_0\}$. Clearly $V_0$ satisfies properties~(1) through (4).

Assume that $V_n$ is defined and satisfies properties~(1) through (4) for each $n\in\{0,1,\dots, (l+1)s+r \}$, where $s\in\N_0$ and $r\in\{0,1,\dots,l\}$. We continue our construction in the following cases depending on $r$.

\smallskip
\emph{Case 1.} Assume $r\in\{0,1,\dots,m-1\}$. Then $v_{(l+1)s+r}=q_r$ and $v_{(l+1)s+r+1}=q_{r+1}$. 

Since $f\(W^{l+1}\(q_{r+1}\)\) = W^l\(q_r\)$ (see Remark~\ref{rmFlower}), and $V_{(l+1)s+r}\subseteq W^l(q_r)$ by the induction hypothesis, we can choose, for each $x\in V_{(l+1)s+r}$, a point $x'\in W^{l+1}(q_{r+1})$ such that $f(x')=x$. Then define $V_{(l+1)s+r+1}$ to be the collection of all such chosen $x'$ that corresponds to $x\in V_{(l+1)s+r}$. Note that
$$
\card V_{(l+1)s+r+1} = \card V_{(l+1)s+r}.
$$

All properties required for $V_{(l+1)s+r+1}$ in the induction step are trivial to verify. We only consider the last property here. Indeed, suppose that $x,y\in V_{(l+1)s+r+1}$ satisfy that $x\neq y$ and $\{x,y\}\subseteq A$ for some $A\in\bigvee\limits_{i=0}^{(l+1)s+r} f^{-i}\(\W^l\)$. Then by construction $f(x),f(y)$, and $f(A)$ satisfy
\begin{enumerate}
\smallskip
\item[(a)] $f(A)\subseteq B$ for some $B\in \bigvee\limits_{i=0}^{(l+1)s+r-1} f^{-i}\(\W^l\)$,

\smallskip
\item[(b)] $f(x), f(y)\in V_{(l+1)s+r}$, and $f(x)\neq f(y)$,

\smallskip
\item[(c)] $\{f(x),f(y)\} \subseteq f(A)\subseteq B$.
\end{enumerate}
This contradicts property~(4) for $V_{(l+1)s+r}$ in the induction hypothesis.

\smallskip
\emph{Case 2.} Assume $r\in\{m,m+1,\dots,l-3\}$. Then $v_{(l+1)s+r+1}=p$, $v_{(l+1)s+m} = q_m$, and when $r\neq m$, we have $v_{(l+1)s+r}=p$. 

If $r=m$, we define $V^1_{(l+1)s+r}=V_{(l+1)s+r}$. Recall that $q_m^1=q_m$.

Note that for each $i\in\{1,2,\dots,k^{r+1-m}\}$, $f\(W^{l+2}\(q_{r+1}^i\)\) = W^{l+1}\(q_r^j\)$ for some $j\in\{1,2,\dots,k^{r-m}\}$ (see Remark~\ref{rmFlower}), and $V^j_{(l+1)s+r} \subseteq W^{l+1}\(q^j_r\)$ by the induction hypothesis. For each $j\in\{1,2,\dots,k^{r-m}\}$, each $x\in V^j_{(l+1)s+r}$, and each $i\in\{1,2,\dots,k^{r+1-m}\}$ with $f\(W^{l+2}\(q_{r+1}^i\)\) = W^{l+1}\(q_r^j\)$, we can choose a point $x'\in W^{l+2}\(q_{r+1}^i\)$ such that $f(x')=x$. Then define $V^i_{(l+1)s+r+1}$ to be the collection of all such chosen $x'$ that corresponds to $x\in V^j_{(l+1)s+r}$. Set $V_{(l+1)s+r+1}= \bigcup\limits_{i=1}^{k^{r+1-m}} V^i_{(l+1)s+r+1}$. 

Since $Q_{r+1} \subseteq \V^{r+1} \cap W^m(p)$, $r\in\{m,m+1,\dots,l-3\}$, $l>m+100$, and no $1$-tile joins opposite sides of $\CC$, we get that
\begin{enumerate}
\smallskip
\item[(a)] for $i,j\in\{ 1,2,\dots,k^{r+1-m}\}$ with $i\neq j$, by Lemma~\ref{lmTileInFlower}(iii),
$$
W^{l+2}\(q^i_{r+1}\) \cap W^{l+2}\(q^j_{r+1}\) =\emptyset,
$$
and so $V^i_{(l+1)s+r+1} \cap V^j_{(l+1)s+r+1} = \emptyset$,

\smallskip
\item[(b)] $V_{(l+1)s+r+1} \subseteq W^m(p)$.
\end{enumerate}
Thus
$$
\card V_{(l+1)s+r+1} = k \card V_{(l+1)s+r}.
$$

We only need to verify property~(4) required for $V_{(l+1)s+r+1}$ in the induction step now. Indeed, suppose that $x,y\in V_{(l+1)s+r+1}$ with $x\neq y$ and $\{x,y\}\subseteq A$ for some $A\in\bigvee\limits_{a=0}^{(l+1)s+r} f^{-a}\(\W^l\)$. Then $A\subseteq W^l(v^l)$ for some $v^l\in \V^l$. By construction, there exist $i,j \in \{1,2,\dots,k^{r+1-m}\}$ such that $x\in W^{l+2}\(q^i_{r+1}\)$ and $y\in W^{l+2}\(q^j_{r+1}\)$. Note that $q^i_{r+1}, q^j_{r+1} \in \V^{r+1}$, $r\in\{m,m+1,\dots,l-3\}$, and $l>m+100$. So $q^i_{r+1}, q^j_{r+1} \in \V^{l-2}$. Since $x\in W^l(v^l)\cap W^{l+2}\(q^i_{r+1}\)$, we get $q^i_{r+1}\in \overline{W}^l(v_l)$ by Lemma~\ref{lmTileInFlower}(i), and thus $v^l\in \overline{W}^l(q^i_{r+1})$. Similarly $v^l\in \overline{W}^l(q^j_{r+1})$. Since $q^i_{r+1}, q^j_{r+1} \in \V^{l-2}$ and no $1$-tile joins opposite sides of $\CC$, we get from Lemma~\ref{lmTileInFlower}(iii) that $q^i_{r+1}=q^j_{r+1}$, i.e., $i=j$. Thus $f(x)\neq f(y)$ by construction. But then $f(x),f(y)$, and $f(A)$ satisfy
\begin{enumerate}
\smallskip
\item[(a)] $f(A)\subseteq B$ for some $B\in \bigvee\limits_{a=0}^{(l+1)s+r-1} f^{-a}\(\W^l\)$,

\smallskip
\item[(b)] $f(x), f(y)\in V_{(l+1)s+r}$, and $f(x)\neq f(y)$,

\smallskip
\item[(c)] $\{f(x),f(y)\} \subseteq f(A)\subseteq B$.
\end{enumerate}
This contradicts property~(4) for $V_{(l+1)s+r}$ in the induction hypothesis.

\smallskip
\emph{Case 3.} Assume $r\in\{l-2,l-1,l\}$, then $v_{(l+1)s+r+1}=v_{(l+1)s+r}=p$. 

Note that for each $i\in\{1,2,\dots,k^{l-2-m}\}$, $f\(W^{l+2}\(q_{r+1}^i\)\) = W^{l+1}\(q_r^i\)$ (see Remark~\ref{rmFlower}), and $V^i_{(l+1)s+r} \subseteq W^{l+1}\(q^i_r\)$ by the induction hypothesis. For each $j\in\{1,2,\dots,k^{l-2-m}\}$ and each $x\in V^i_{(l+1)s+r}$, we can choose a point $x'\in W^{l+2}\(q_{r+1}^i\)$ such that $f(x')=x$. Then define $V^i_{(l+1)s+r+1}$ to be the collection of all such chosen $x'$ that corresponds to $x\in V^i_{(l+1)s+r}$. Set $V_{(l+1)s+r+1}= \bigcup\limits_{i=1}^{k^{l-2-m}} V^i_{(l+1)s+r+1}$. 

Since $Q_{r+1} \subseteq \V^{r+1} \cap W^r(p)$, $r\in\{l-2,l-1,l\}$, and $l>m+100$, we get that
\begin{enumerate}
\smallskip
\item[(a)] for $i,j\in\{ 1,2,\dots,k^{l-2-m}\}$ with $i\neq j$, 
$$
f\big(V^i_{(l+1)s+r+1}\big) \cap f\big(V^j_{(l+1)s+r+1}\big) = V^i_{(l+1)s+r} \cap V^j_{(l+1)s+r}=\emptyset
$$
(by the induction hypothesis), and so 
$$
V^i_{(l+1)s+r+1} \cap V^j_{(l+1)s+r+1} = \emptyset,
$$

\smallskip
\item[(b)] $V_{(l+1)s+r+1} \subseteq W^m(p)$,

\smallskip
\item[(c)] if $r=l$, then $V_{(l+1)s+r+1} \subseteq W^l(p)$.
\end{enumerate}
Thus
$$
\card V_{(l+1)s+r+1} = \card V_{(l+1)s+r}.
$$

We only need to verify the last property required for $V_{(l+1)s+r+1}$ in the induction step now. Indeed, suppose that $x,y\in V_{(l+1)s+r+1}$ with $x\neq y$ and $\{x,y\}\subseteq A$ for some $A\in\bigvee\limits_{i=0}^{(l+1)s+r} f^{-i}\(\W^l\)$. Then by construction $f(x),f(y)$, and $f(A)$ satisfy
\begin{enumerate}
\smallskip
\item[(a)] $f(A)\subseteq B$ for some $B\in \bigvee\limits_{i=0}^{(l+1)s+r-1} f^{-i}\(\W^l\)$,

\smallskip
\item[(b)] $f(x), f(y)\in V_{(l+1)s+r}$, and $f(x)\neq f(y)$,

\smallskip
\item[(c)] $\{f(x),f(y)\} \subseteq f(A)\subseteq B$.
\end{enumerate}
This contradicts property~(4) for $V_{(l+1)s+r}$ in the induction hypothesis.

\smallskip

The recursive construction and the inductive proof of the properties of the construction are now complete.

Note that by our construction, we have
\begin{equation}    \label{eqCardUn}
\card V_{(l+1)s} = k^{(l-m-2)s}, \qquad s\in \N.
\end{equation}

\smallskip

For each $s\in\N$, we consider
$$
B_{(l+1)s}= \bigcap\limits_{j=0}^{(l+1)s-1} f^{-j} \(W^m\(v_{(l+1)s - j}\) \)  \in \bigvee\limits_{j=0}^{(l+1)s-1} f^{-j} \(\W^m \).
$$
Then $V_{(l+1)s} \subseteq B_{(l+1)s}$ by properties~(1) and (2) of the construction. On the other hand, by property~(4), if $\mathcal{A}\subseteq \bigvee\limits_{j=0}^{(l+1)s-1} f^{-j} \(\W^l \)$ satisfies
$$
\bigcup \mathcal{A} \supseteq B_{(l+1)s} \supseteq V_{(l+1)s}.
$$
So $\card \mathcal{A} \geq \card V_{(l+1)s}$.

Thus by (\ref{eqDefTopTailEntropy}), (\ref{eqH(xi|eta)}), and (\ref{eqCardUn}),
\begin{align*}
h^*(f)  & = \lim\limits_{m\to+\infty}  \lim\limits_{l\to+\infty} \lim\limits_{n\to+\infty}  \frac{1}{n} H \( \bigvee\limits_{i=0}^{n-1} f^{-i}\(\W^l\) \Bigg| \bigvee\limits_{j=0}^{n-1} f^{-j}\(\W^m\)  \)   \\
        & \geq \liminf\limits_{m\to+\infty}  \liminf\limits_{l\to+\infty} \liminf\limits_{s\to+\infty}  \frac{1}{(l+1)s} \log \(k^{(l-m-2)s}\)  \\
        & = \liminf\limits_{m\to+\infty}  \liminf\limits_{l\to+\infty}  \frac{l-m-2}{l+1} \log k \\
        & = \log k  \\
        & > 0.
\end{align*}
Therefore, the map $f$ is not asymptotically $h$-expansive.
\end{proof}

\begin{lemma}   \label{lmHExpIterates}
Let $g\: X\rightarrow X$ be a continuous map on a compact metric space $(X,d)$. If $g$ is $h$-expansive then so is $g^n$ for each $n\in\N$. 
\end{lemma}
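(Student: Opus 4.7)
The plan is to take the finite open cover $\lambda$ witnessing the $h$-expansiveness of $g$ and show that $\lambda' := \bigvee_{i=0}^{n-1} g^{-i}(\lambda)$ witnesses the $h$-expansiveness of $g^n$. Clearly $\lambda'$ is a finite open cover of $X$, since $g$ is continuous and the common refinement of finitely many finite open covers is a finite open cover.

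The first key observation is the telescoping identity
\begin{equation*}
\bigvee_{j=0}^{k-1} (g^n)^{-j}(\lambda') \;=\; \bigvee_{j=0}^{k-1} \bigvee_{i=0}^{n-1} g^{-(nj+i)}(\lambda) \;=\; \bigvee_{m=0}^{nk-1} g^{-m}(\lambda)
\end{equation*}
for each $k\in\N$. Similarly, for any refining sequence $\{\xi_l\}_{l\in\N_0}$ of open covers of $X$ (which exists by the Lebesgue Number Lemma and which, by the independence statement in Section~\ref{subsctAsymHExpConcepts}, may be used in the defining formula for both $h(g|\lambda)$ and $h(g^n|\lambda')$), the cover $\bigvee_{i=0}^{nk-1} g^{-i}(\xi_l)$ is a refinement of $\bigvee_{i=0}^{k-1} g^{-ni}(\xi_l) = \bigvee_{i=0}^{k-1} (g^n)^{-i}(\xi_l)$.

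Next I would invoke the monotonicity of the quantity $H$ defined in (\ref{eqH(xi|eta)}): if a cover $\mu$ is a refinement of a cover $\mu'$, then for any cover $\nu$ one has $H(\mu'\,|\,\nu) \leq H(\mu\,|\,\nu)$, because any subfamily of $\mu$ covering $A\in\nu$ yields, by replacing each of its members with a containing element of $\mu'$, a subfamily of $\mu'$ of no greater cardinality that still covers $A$. Combining this with the two previous observations yields
\begin{equation*}
\frac{1}{k} H\!\left( \bigvee_{i=0}^{k-1} (g^n)^{-i}(\xi_l) \,\Bigg|\, \bigvee_{j=0}^{k-1} (g^n)^{-j}(\lambda') \right) \;\leq\; \frac{n}{nk} H\!\left( \bigvee_{i=0}^{nk-1} g^{-i}(\xi_l) \,\Bigg|\, \bigvee_{j=0}^{nk-1} g^{-j}(\lambda) \right).
\end{equation*}
Passing to the limit $k\to+\infty$ (so $nk\to+\infty$; the limit on the right along the subsequence $\{nk\}_{k\in\N}$ equals the full limit in the definition of $h(g|\lambda)$ since that full limit exists) and then $l\to+\infty$ gives $h(g^n|\lambda') \leq n\cdot h(g|\lambda) = 0$. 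Since $H\geq 0$ forces $h(g^n|\lambda')\geq 0$, we conclude $h(g^n|\lambda')=0$, so $g^n$ is $h$-expansive.

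The only non-formal step is the monotonicity lemma for $H$ in its first argument; everything else is a bookkeeping computation with common refinements and iterated preimages, so I do not anticipate a serious obstacle.
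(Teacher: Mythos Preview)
Your proof is correct and follows essentially the same approach as the paper: both take $\lambda' = \bigvee_{i=0}^{n-1} g^{-i}(\lambda)$ as the witnessing cover for $g^n$ and use the same telescoping identity $\bigvee_{j=0}^{k-1}(g^n)^{-j}(\lambda') = \bigvee_{m=0}^{nk-1} g^{-m}(\lambda)$. The only minor difference is in the treatment of the refining sequence: the paper replaces $\{\xi_l\}$ by the refining sequence $\{\xi_l^n\}$ with $\xi_l^n=\bigvee_{i=0}^{n-1} g^{-i}(\xi_l)$, which yields the exact equality $h(g^n|\lambda')=n\cdot h(g|\lambda)$, whereas you keep $\{\xi_l\}$ and invoke monotonicity of $H(\cdot\,|\,\eta)$ in its first argument to obtain only the inequality $h(g^n|\lambda')\leq n\cdot h(g|\lambda)$, which is all that is needed.
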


The converse can also be easily established, i.e., if $g^n$ is $h$-expansive for some $n\in\N$, then so is $g$. But we will not need it in this paper.

\begin{proof}
We first observe from Definition~\ref{defRefSeqOpenCover} that if $\{\xi_l\}_{l\in\N_0}$ is a refining sequence of open covers, then so is $\{\xi^n_l\}_{l\in\N_0}$ for each $n\in\N$, where $\xi^n_l = \bigvee\limits_{i=0}^{n-1} g^{-i}\(\xi_l\)$. We also note that given an open cover $\lambda$ of $X$, we have
$$
\bigvee\limits_{i=0}^{mn-1} g^{-i}\(\lambda\) =\bigvee\limits_{j=0}^{m-1} \(g^n\)^{-j}\(\lambda^n\)
$$ 
for $n,m\in \N$, where $\lambda^n = \bigvee\limits_{k=0}^{n-1} g^{-k}\(\lambda\)$. 

Assume that $g$ is $h$-expansive, then $h(g|\lambda)=0$ for some finite open cover $\lambda$ of $X$. Thus for each $n\in\N$,
\begin{align*}
 h(g|\lambda)  =  & \lim\limits_{l\to+\infty} \lim\limits_{m\to+\infty}  \frac{1}{mn} H \( \bigvee\limits_{i=0}^{mn-1} g^{-i}\(\xi_l\) \Bigg| \bigvee\limits_{j=0}^{mn-1} g^{-j}\(\lambda\)  \)  \\
=  & \frac1n \lim\limits_{l\to+\infty} \lim\limits_{m\to+\infty}  \frac{1}{m} H \( \bigvee\limits_{i=0}^{m-1} \(g^n\)^{-i}\(\xi^n_l\) \Bigg| \bigvee\limits_{j=0}^{m-1} \(g^n\)^{-j}\(\lambda^n\)  \)  \\
=  & \frac1n h\(g^n|\lambda^n\), 
\end{align*}
where $\xi^n_l$, $\lambda^n$ are defined as above. Note that $\lambda^n$ is also a finite open cover of $X$. Therefore $h\(g^n|\lambda^n\) = 0$, i.e., $g^n$ is $h$-expansive.
\end{proof}

The proof of the following theorem is similar to that of Theorem~\ref{thmNotAsympHExp}, and slightly simpler. However, due to subtle differences in both notation and constructions, we include the proof for the convenience of the reader.

\begin{theorem}  \label{thmNotHExp}
No expanding Thurston map is $h$-expansive.
\end{theorem}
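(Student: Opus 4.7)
Since $h$-expansiveness implies asymptotic $h$-expansiveness (see the Introduction), Theorem~\ref{thmNotAsympHExp} already disposes of expanding Thurston maps with a periodic critical point, so I may assume $f$ has no periodic critical points. By Lemma~\ref{lmHExpIterates} it suffices to show some iterate of $f$ is not $h$-expansive, so after passing to an iterate I may, following the reductions at the start of the proof of Theorem~\ref{thmNotAsympHExp}, additionally assume: (a) there is an $f$-invariant Jordan curve $\CC\supseteq\post f$ with no $1$-tile joining opposite sides of $\CC$ (Lemma~\ref{lmCexistsL}); (b) there exists a critical point $p$ whose image $\tilde p:=f(p)$ is fixed by $f$ (possible since $\post f$ is finite and each postcritical point is preperiodic, so after composition with a common multiple of the periods of the limit cycles in $\post f$ every postcritical point, and in particular $\tilde p$, is a fixed point); and (c) $f^{-1}(\tilde p)\setminus\CC\neq\emptyset$ (Lemma~\ref{lmPreImageDense}). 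Since $f$ has no periodic critical points, $p\neq\tilde p$; set $k:=\deg_f(p)\geq 2$.

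To prove $f$ is not $h$-expansive, it suffices to prove $h(f|\W^m)>0$ for every sufficiently large $m$. Indeed, any finite open cover $\lambda$ of $S^2$ is refined by $\W^m$ for $m$ large, since the flowers in $\W^m$ have uniformly small diameter (combine Lemma~\ref{lmCellBoundsBM} with the uniform bound $\sup_{v\in\V^m}\deg_{f^m}(v)<+\infty$ from \cite[Lemma~17.1]{BM10}, valid here because $f$ has no periodic critical points) together with the Lebesgue Number Lemma; hence $h(f|\lambda)\geq h(f|\W^m)>0$ by monotonicity of $h(f|\cdot)$ under refinement of its second argument.

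The construction mirrors the proof of Theorem~\ref{thmNotAsympHExp}. Fix $m$ large. For each $l>m+100$, construct a periodic sequence $\{v_i\}_{i\in\N_0}$ of $m$-vertices with period $T=l+O(1)$ and finite subsets $V_n\subseteq S^2$ (defined for $n$ a multiple of $T$) satisfying:
\begin{enumerate}
\item[(1)] $V_n\subseteq B_n := \bigcap_{j=0}^{n-1} f^{-j}\!\left(W^m(v_{n-j})\right)$;
\item[(2)] no element of $\bigvee_{i=0}^{n-1} f^{-i}(\W^l)$ contains two distinct points of $V_n$;
\item[(3)] $\card V_n \geq k^{(l-m-c)\,n/T}$ for some constant $c$ independent of $n$ and $l$.
\end{enumerate}
Any cover of $B_n$ by elements of $\bigvee_{i=0}^{n-1} f^{-i}(\W^l)$ must then have cardinality at least $\card V_n$, whence
\[
h(f|\W^m) \;\geq\; \lim_{l\to\infty}\frac{(l-m-c)\log k}{T} \;=\; \log k \;>\; 0.
\]
A single period of $\{v_i\}$ shadows a backward pseudo-orbit that starts at $\tilde p$, steps once through the critical point $p$ (initiating the $k$-fold branching), passes through a chain of deep iterated preimages arranged in nested flowers $W^{j-1}(\tilde p)\setminus W^{j+1}(\tilde p)$ (now anchored at the fixed point $\tilde p$ rather than at $p$), multiplies the $V$-cluster by a factor of $k$ at each pullback step via the local $z\mapsto z^k$ model at $p$ exactly as in Case~2 of the proof of Theorem~\ref{thmNotAsympHExp}, and finally returns to $\tilde p$ to restart. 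The analogues of the sets $Q_j$ from Theorem~\ref{thmNotAsympHExp} are now built as iterated preimages of $p$ lying in nested $\tilde p$-flowers; the recursive definition of $V_n$ then splits into the same three cases (initialization, multiplication, consolidation) as before, and properties~(1)--(3) are verified along the same lines, with Lemma~\ref{lmTileInFlower}(i),(iii) once again supplying the disjointness needed for~(2).

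The main obstacle is to replace the ``fixed branch at $p$'' used in Theorem~\ref{thmNotAsympHExp} — which produced the nested chain of deep preimages of $p$ accumulating at $p$ itself — with a corresponding ``fixed branch at $\tilde p$'' yielding an analogous chain of deep iterated preimages of $p$ accumulating at $\tilde p$, all while retaining the $k$-fold branching mechanism at the (non-periodic) critical point $p$. The careful bookkeeping of flower containments across all three cases of the recursion and the disjointness verification for~(2) is the bulk of the technical work, but these modifications proceed along exactly the same structural lines as in Theorem~\ref{thmNotAsympHExp}.
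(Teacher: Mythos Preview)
Your reductions (a)--(c) and the overall plan---lower-bound $h(f|\W^m)$ by exhibiting separated sets $V_n$ inside a specific element of $\bigvee_{j=0}^{n-1} f^{-j}(\W^m)$---are the right framework. But the claimed growth $\card V_n\geq k^{(l-m-c)\,n/T}$ with $T=l+O(1)$ cannot hold: it would give $h(f|\W^m)\geq\log k$ \emph{uniformly in $m$}, and hence $h^*(f)=\lim_{m\to\infty}h(f|\W^m)\geq\log k>0$, directly contradicting Theorem~\ref{thmAsympHExpWOPC}, which applies precisely in the case you have reduced to.

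The gap is in transplanting Case~2 of Theorem~\ref{thmNotAsympHExp} unchanged. There the $k$-fold multiplication occurs at \emph{every} step $r\in\{m,\dots,l-3\}$ because the sets $Q_r$ sit in the flowers $W^{r-1}(p)$, and the pullback $f\colon W^r(p)\to W^{r-1}(p)$ is $k$-to-$1$ via the local model at the \emph{fixed} critical point $p$. In the present situation $p$ is not fixed; rather $\tilde p=f(p)$ is fixed, and necessarily $\deg_f(\tilde p)=1$ (otherwise $\tilde p$ would be a periodic critical point). Thus pullback along the $\tilde p$-branch $W^r(\tilde p)\to W^{r-1}(\tilde p)$ is a bijection and contributes no multiplication, while a single pullback through $p$ sends the cluster into a flower $W^j(p)$ centered at $p$, not at $\tilde p$, so you cannot immediately pull back through $p$ again. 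Returning the cluster to a deep $\tilde p$-flower costs further non-multiplying steps. Your sentence ``multiplies the $V$-cluster by a factor of $k$ at each pullback step'' is therefore where the argument fails.

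The paper's construction accordingly runs with period $T=m+2$, not $l+O(1)$: one pullback through $p$ contributes a single factor of $k$, followed by $m+1$ non-multiplying pullbacks along a chain $q_1=p,\,q_2,\dots,q_{m+2}$ of iterated preimages of $\tilde p$ accumulating at $\tilde p$. This yields $\card V_{(m+2)s}=k^s$ and $h(f|\W^m)\geq\frac{\log k}{m+2}>0$. The bound depends on $m$ and tends to $0$ as $m\to\infty$, consistently with $h^*(f)=0$.
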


\begin{proof}
Let $f$ be an expanding Thurston map.

By Theorem~\ref{thmNotAsympHExp} and the fact that if $f$ is $h$-expanding then it is asymptotically $h$-expansive (see \cite[Corollary~2.1]{Mi76}), we can assume that $f$ has no periodic critical points.

Note that by (\ref{eqLocalDegreeProduct}), if a point $x\in S^2$ is a periodic critical point of $f^i$ for some $i\in\N$, then there exists $j\in\N_0$ such that $f^j(x)$ is a periodic critical point of $f$. So $f^i$ has no periodic critical points for $i\in\N$.

By Lemma~\ref{lmHExpIterates}, it suffices to prove that there exists $i\in\N$ such that $f^i$ is not $h$-expansive. Thus by Lemma~\ref{lmCexistsL}, we can assume, without loss of generality, that there exists a Jordan curve $\CC\subseteq S^2$ containing $\post f$ such that $f(\CC)\subseteq \CC$ and no $1$-tile joins opposite sides of $\CC$.

In addition, we can assume, without loss of generality, that there exists a critical point $p\in\crit f \setminus \CC$ with $f^2(p)=f(p) \neq p$. Indeed, we can choose any critical point $p_0\in \crit f$, then $f^{2i}(p_0)=f^i(p_0)\neq p_0$ for some $i\in\N$ since $f$ has no periodic critical points. By Lemma~\ref{lmPreImageDense}, there exist $j\in\N$ and $p\in f^{-ij}(p_0)\setminus \CC$. We replace $f$ by $f^{i(j+1)}$. Note that for this new map $f$, we have $p\in\crit f \setminus \CC$, $f^2(p)=f(p)\neq p$, $f(\CC)\subseteq \CC$ and no $1$-tile joins opposite sides of $\CC$.

Let $k=\deg_f(p)$. Then $k>1$.

From now on, we consider the cell decompositions of $S^2$ induced by $f$ and $\CC$ in this proof.

\smallskip

Recall that $\W^i$ defined in (\ref{defSetNFlower}) denotes the set of all $i$-flowers $W^i(v)$, $v\in\V^i$, for each $i\in\N_0$.

Since $f$ is expanding, it is easy to see from Lemma~\ref{lmCellBoundsBM}, Proposition~\ref{propCellDecomp}, and the Lebesgue Number Lemma (\cite[Lemma~27.5]{Mu00}) that $\{ \W^i \}_{i\in\N_0}$ forms a refining sequence of open covers of $S^2$ (see Definition~\ref{defRefSeqOpenCover}). Thus it suffices to prove that 
\begin{equation*}
h(f|\W^m)=\lim\limits_{l\to+\infty} \lim\limits_{n\to+\infty}  \frac{1}{n} H \( \bigvee\limits_{i=0}^{n-1} f^{-i}\(\W^l\) \Bigg| \bigvee\limits_{j=0}^{n-1} f^{-j}\(\W^m\)  \) > 0
\end{equation*}
for each $m\in\N$ sufficient large. See (\ref{eqH(xi|eta)}) for the definition of $H$.

Our plan is to construct a sequence $\{v_i\}_{i\in\N_0}$ of $m$-vertices such that for each $n\in\N_0$, the number of elements in $\bigvee\limits_{i=0}^{n-1} f^{-i}\(\W^l\)$ needed to cover $B_n=\bigcap\limits_{j=0}^{n-1} f^{-j} (W^m(v_{n-j}))$ can be bounded from below in such a way that $h(f|\W^m)>0$ follows immediately. More precisely, we observe that the more connected components $B_n$ has, the harder to cover $B_n$. So we will choose $\{v_i\}_{i\in\N_0}$ as a periodic sequence of $m$-vertices shadowing an infinite backward pseudo-orbit under iterations of $f$ in such a way that each period of $\{v_i\}_{i\in\N_0}$ begins with a backward orbit starting at $f(p)$ and $p$, and approaching $f(p)$ as the index $i$ increases, and then ends with $f(p)$. By a recursive construction, we keep track of each $B_n$ by a finite subset $V_n\subseteq B_n$ with the property that $\card(A\cap V_n)\leq 1$ for each $A \in \bigvee\limits_{i=0}^{n-1} f^{-i}\(\W^l\)$. A quantitative control of the size of $V_n$ leads to the conclusion that $h(f|\W^m)>0$ for some $m$ sufficiently large.

\smallskip

For this we fix $m,l\in\N$ with $l>2m+100 >200$.

Define $q_1 = p$. Then $q_1$ is necessarily a $1$-vertex, but not a $0$-vertex, i.e., $q_1\in \V^1 \setminus \V^0$. Since $q_1=p\notin \CC$, we have $q_1\in W^0(f(p))$. By (\ref{defFlower}), the only $2$-vertex contained in $W^2(f(p))$ is $f(p)$. So $q_1 \in W^0(f(p)) \setminus W^2(f(p))$. Since $f(W^i(f(p))) = W^{i-1}(f(p))$ for each $i\in \N$ (see Remark~\ref{rmFlower}), we can recursively choose $q_j\in\V^j$ for each $j\in\{2,3,\dots,m+2\}$ such that
\begin{enumerate}

\smallskip
\item[(i)] $f(q_j) = q_{j-1}$,

\smallskip
\item[(ii)] $q_j \in W^{j-1}(f(p)) \setminus W^{j+1}(f(p))$.
\end{enumerate}
Set $q_0 = q_{m+2}$.

Since $f(W^i(p)) = W^{i-1}(f(p))$ for each $i\in \N$, and $k=\deg_f(p)>1$, we can choose distinct points $p_i\in\V^{m+3}$, $i\in\{1,2,\dots,k\}$, such that
\begin{enumerate}

\smallskip
\item[(i)] $f(p_i)= q_{m+2}$,

\smallskip
\item[(ii)] $p_i \in W^{m+2}(p) \setminus W^{m+4}(p)$.
\end{enumerate}

We will now construct recursively, for each $n=(m+2)s+r$ with $s\in\N_0$ and $r\in\{0,1,\dots,m+1\}$, an $m$-vertex $v_n\in\V^m$ and a set of $n$-vertices $V_n\subseteq \V^n$ such that for each $n\in\N_0$, the following properties are satisfied:
\begin{enumerate}

\smallskip
\item[(1)] $V_n \subseteq W^m(v_n)$;

\smallskip
\item[(2)] $f(V_n) = V_{n-1}$ if $n\neq 0$;

\smallskip
\item[(3)]
\begin{enumerate}
\item[(i)] $V_n \subseteq W^{m+1+r}(q_r)$ if $n=(m+2)s+r$ for some $s\in\N_0$ and some $r\in\{1,2,\dots,m+1\}$, 

\smallskip
\item[(ii)] $V_n\subseteq W^{m+1+m+2}(q_0)$ if $n=(m+2)s$ for some $s\in\N_0$;
\end{enumerate}

\smallskip
\item[(4)] $\card V_n = k^{ \lceil \frac{n}{m+2} \rceil}$;

\smallskip
\item[(5)] for $A\in \bigvee\limits_{i=0}^{n-1} f^{-i} \(\W^l\)$ and $x,y\in V_n$ with $x\neq y$, we have $\{x,y\}\nsubseteq A$.
\end{enumerate}

We start our construction by first defining $v_n\in\V^m$ for each $n\in\N_0$. For $s\in\N_0$ and $r\in\{1,2,\dots,m\}$, set $v_{(m+2)s+r} = q_r$. For $s\in\N_0$ and $r\in\{0, m+1\}$, set $v_{(m+2)s+r} = f(p)$.

We now define $V_n$ recursively.

Let $V_0 = \{q_{m+2}\}$. Clearly $V_0$ satisfies properties~(1) through (5) in the induction step.

Assume that $V_n$ is defined and satisfies properties~(1) through (5) for each $n\in\{0,1,\dots,(m+2)s+r\}$, where $s\in\N_0$ and $r\in\{0,1,\dots,m+1\}$, we continue our construction in the following cases depending on $r$.

\smallskip

\emph{Case 1.} Assume $r=0$. Then $v_{(m+2)s+r} = f(p)$ and $v_{(m+2)s+r+1} = q_1=p$.

Note that $V_{(m+2)s+r} \subseteq W^{2m+3}(q_r)$ by the induction hypothesis, $q_r = q_{m+2} \in W^{m+1}(f(p))$, $f(p_i) = q_r$, and $f\(W^{2m+4}(p_i)\) = W^{2m+3}(q_r)$ for each $i\in\{1,2,\dots,k\}$ (see Remark~\ref{rmFlower}). Fix an arbitrary $i\in\{1,2,\dots,k\}$. We can choose, for each $x\in V_{(m+2)s+r}$, a point $x'\in W^{2m+4}(p_i)$ such that $f(x')=x$. Then define $V^i_{(m+2)s+r+1}$ to be the collection of all such chosen $x'$ that corresponds to $x\in V_{(m+2)s+r}$. Set 
$$
V_{(m+2)s+r+1} = \bigcup\limits_{i=1}^k V^i_{(m+2)s+r+1}.
$$

Since $p_i \in W^{m+2}(p)$ and $V^i_{(m+2)s+r+1} \subseteq W^{2m+4}(p_i)$, we get that $V^i_{(m+2)s+r+1} \subseteq W^{m+2}(p)$. So $V_{(m+2)s+r+1} \subseteq W^{m+2}(p) \subseteq W^m(p)$. Since $v_{(m+2)s+r+1} = q_1=p$, properties~(1) and (3) are verified. Property~(2) is clear from the construction. 

To establish property~(4), it suffices to show that $V^i_{(m+2)s+r+1} \cap V^j_{(m+2)s+r+1} = \emptyset$ for $1\leq i<j \leq k$. Indeed, since $V^i_{(m+2)s+r+1} \subseteq W^{2m+4}(p_i)$ and $V^j_{(m+2)s+r+1} \subseteq W^{2m+4}(p_j)$, it suffices to prove that $\overline{W}^{2m+4}(p_i)\cap \overline{W}^{2m+4}(p_j) = \emptyset$. Suppose that $\overline{W}^{2m+4}(p_i)\cap \overline{W}^{2m+4}(p_j) \neq \emptyset$, then since no $1$-tile joins opposite sides of $\CC$, and $p_i,p_j\in\V^{m+3}$, we get from Lemma~\ref{lmTileInFlower}(iii) that $p_i=p_j$, i.e., $i=j$. But $i<j$, a contradiction.

We only need to verify property~(5) now. Indeed, suppose that distinct points $x,y\in V_{(m+2)s+r+1}$ satisfy $\{x,y\}\subseteq A$ for some $A\in\bigvee\limits_{a=0}^{(m+2)s+r} f^{-a}\(\W^l\)$. Then $A\subseteq W^l(v^l)$ for some $v^l\in\V^l$. By construction, there exist $i,j\in\{1,2,\dots,k\}$ such that $x\in W^{2m+4}(p_i)$ and $y\in W^{2m+4}(p_j)$. Since $l> 2m+100$ and $x\in W^l(v^l)\cap W^{2m+4}(p_i)$, we get $v^l \in \overline{W}^{2m+4}(p_i)$ by Lemma~\ref{lmTileInFlower}(i). Similarly $v^l\in \overline{W}^{2m+4}(p_j)$. Then by the argument above, we get that $p_i=p_j$, i.e., $i=j$. Thus $f(x)\neq f(y)$ by construction. But then $f(x),f(y)$, and $f(A)$ satisfy
\begin{enumerate}

\smallskip
\item[(a)] $f(A)\subseteq B$ for some $B\in \bigvee\limits_{a=0}^{(m+2)s+r-1} f^{-a}\(\W^l\)$,

\smallskip
\item[(b)] $f(x), f(y)\in V_{(m+2)s+r}$, and $f(x)\neq f(y)$,

\smallskip
\item[(c)] $\{f(x),f(y)\} \subseteq f(A)\subseteq B$.
\end{enumerate}
This contradicts property~(5) for $V_{(m+2)s+r}$ in the induction hypothesis.

\smallskip

\emph{Case 2.} Assume $r\neq 0$, i.e., $r\in\{1,2,\dots,m+1\}$.

Note that $V_{(m+2)s+r} \subseteq W^{m+1+r} (q_r)$, $f(q_{r+1})=q_r$, and by Remark~\ref{rmFlower}, $
f\(W^{m+1+r+1}(q_{r+1})\) = W^{m+1+r}(q_r)$. We can choose, for each $x\in V_{(m+2)s+r}$, a point $x'\in W^{m+1+r+1}(q_{r+1})$ such that $f(x')=x$. Then define $V_{(m+2)s+r+1}$ to be the collection of all such chosen $x'$ that corresponds to $x\in V_{(m+2)s+r}$. Properties~(2), (3), and (4) are clear from the construction. To establish property~(1) in the case when $r\in\{1,2,\dots,m-1\}$, we recall that $v_{(m+2)s+r+1} = q_{r+1}$. For the case when $r\in\{m,m+1\}$, we note that $V_{(m+2)s+r+1} \subseteq W^{m+1+r+1}(q_{r+1})$ and $q_{r+1}\in W^r(f(p))$, so
$$
V_{(m+2)s+r+1} \subseteq W^{2m}(q_{r+1}) \subseteq W^m(f(p)) = W^m\(v_{(m+2)s+r+1}\).
$$

We only need to verify property~(5) now. Indeed, suppose that distinct points $x,y\in V_{(m+2)s+r+1}$ satisfy $\{x,y\}\subseteq A$ for some $A\in\bigvee\limits_{i=0}^{(m+2)s+r} f^{-i}\(\W^l\)$. Then by construction $f(x),f(y)$, and $f(A)$ satisfy
\begin{enumerate}

\smallskip
\item[(a)] $f(A)\subseteq B$ for some $B\in \bigvee\limits_{i=0}^{(m+2)s+r-1} f^{-i}\(\W^l\)$,

\smallskip
\item[(b)] $f(x), f(y)\in V_{(m+2)s+r}$, and $f(x)\neq f(y)$,

\smallskip
\item[(c)] $\{f(x),f(y)\} \subseteq f(A)\subseteq B$.
\end{enumerate}
This contradicts property~(5) for $V_{(m+2)s+r}$ in the induction hypothesis.

\smallskip

The recursive construction and the inductive proof of the properties of the construction are now complete.

\smallskip

For each $s\in\N$, we consider
$$
B_{(m+2)s}= \bigcap\limits_{j=0}^{(m+2)s-1} f^{-j} \(W^m\(v_{(m+2)s - j}\) \)  \in \bigvee\limits_{j=0}^{(m+2)s-1} f^{-j} \(\W^m \).
$$
Then $V_{(m+2)s} \subseteq B_{(m+2)s}$ by properties~(1) and (2) of the construction. On the other hand, by property~(5), if $\mathcal{A}\subseteq \bigvee\limits_{j=0}^{(m+2)s-1} f^{-j} \(\W^l \)$ satisfies
$$
\bigcup \mathcal{A} \supseteq B_{(m+2)s} \supseteq V_{(m+2)s}.
$$
So $
\card \mathcal{A} \geq \card V_{(m+2)s} = k^s$, where the equality follows from property~(4).

Thus by (\ref{eqH(xi|eta)}),
\begin{align*}
h(f|\W^m)= & \lim\limits_{l\to+\infty} \lim\limits_{n\to+\infty}  \frac{1}{n} H \( \bigvee\limits_{i=0}^{n-1} f^{-i}\(\W^l\) \Bigg| \bigvee\limits_{j=0}^{n-1} f^{-j}\(\W^m\)  \)   \\
\geq  & \liminf\limits_{l\to+\infty} \liminf\limits_{s\to+\infty}  \frac{1}{(m+2)s} \log \(k^s\)  
 =    \frac{\log k}{m+2}  
 >     0.
\end{align*}

Therefore, the map $f$ is not $h$-expansive.
\end{proof}

\section{Large deviation principles}    \label{sctLDP}

\subsection{Thermodynamical formalism}   \label{subsctThermDynForm}

We review some key concepts from thermodynamical formalism. For a more careful introduction in our context, see \cite[Section~5]{Li14}. We refer the reader to \cite[Chapter~3]{PU10}, \cite[Chapter~9]{Wa82} or \cite[Chapter~20]{KH95} for a detailed study of these concepts for general dynamical systems.

Let $(X,d)$ be a compact metric space and $g\:X\rightarrow X$ a continuous map. For $n\in\N$ and $x,y\in X$,
$$
d^n_g(x,y)=\operatorname{max}\big\{ \hspace{-0.2mm} d\!\(g^k(x),g^k(y)\) \hspace{-0.3mm} \big| k\in\{0,1,\dots,n-1\} \!\big\}
$$
defines a new metric on $X$. A set $F\subseteq X$ is \defn{$(n,\epsilon)$-separated}, for some $n\in\N$ and $\epsilon>0$, if for each pair of distinct points $x,y\in F$, we have $d^n_g(x,y)\geq \epsilon$. For $\epsilon > 0$ and $n\in\N$, let $F_n(\epsilon)$ be a maximal (in the sense of inclusion) $(n,\epsilon)$-separated set in $X$.

For each $\psi \in\CCC(X)$, the following limits exist and are equal, and we denote the limits by $P(g,\psi)$ (see for example, \cite[Theorem~3.3.2]{PU10}):
\begin{align}  \label{defTopPressure}
P(g,\psi)  & = \lim \limits_{\epsilon\to 0} \limsup\limits_{n\to+\infty} \frac{1}{n} \log  \sum\limits_{x\in F_n(\epsilon)} \exp(S_n \psi(x)) \notag \\
           & = \lim \limits_{\epsilon\to 0} \liminf\limits_{n\to+\infty} \frac{1}{n} \log  \sum\limits_{x\in F_n(\epsilon)} \exp(S_n \psi(x)), 
\end{align}
where $S_n \psi (x) = \sum\limits_{j=0}^{n-1} \psi(g^j(x))$ is defined in (\ref{eqDefSnPt}). We call $P(g,\psi)$ the \defn{topological pressure} of $g$ with respect to the \emph{potential} $\psi$. When $\psi\equiv 0$, we call $P(g,0)=h_{\operatorname{top}}(g)$ the \defn{topological entropy} of $g$.

Let $\mu\in \MMM(X,g)$. We denote by $h_\mu(g)$ the \emph{measure-theoretic entropy} of $g$ for $\mu$. Then for each $\psi\in\CCC(X)$, the \defn{measure-theoretic pressure} $P_\mu(g,\psi)$ of $g$ for the measure $\mu$ and the potential $\psi$ is
\begin{equation}  \label{defMeasTheoPressure}
P_\mu(g,\psi)= h_\mu (g) + \int \! \psi \,\mathrm{d}\mu.
\end{equation}

By the Variational Principle (see for example, \cite[Theorem~3.4.1]{PU10}), we have that for each $\psi\in\CCC(X)$,
\begin{equation}  \label{eqVPPressure}
P(g,\psi)=\sup\{P_\mu(g,\psi)\,|\,\mu\in \MMM(X,g)\},
\end{equation}
and in particular,
\begin{equation}  \label{eqVPEntropy}
h_{\operatorname{top}}(g)=\sup\{h_\mu(g)\,|\,\mu\in \MMM(X,g)\}.
\end{equation}
A measure $\mu$ that attains the supremum in (\ref{eqVPPressure}) is called an \defn{equilibrium state} for the transformation $g$ and the potential $\psi$. A measure $\mu$ that attains the supremum in (\ref{eqVPEntropy}) is called a \defn{measure of maximal entropy} of $g$.

By the work of P.~Ha\"{\i}ssinsky and K.~Pilgrim \cite{HP09}, and M.~Bonk and D.~Meyer \cite{BM10}, we know that there exists a unique measure of maximal entropy $\mu_f$ for an expanding Thurston map $f$, and that
\begin{equation}   \label{eqTopEntropy}
h_{\operatorname{top}} (f)=\log(\deg f).
\end{equation}

The existence and uniqueness of the equilibrium state for an expanding Thurston map $f$ and a H\"older continuous potential $\phi$ is established in \cite[Theorem~1.1]{Li14}.

\begin{theorem}   \label{thmExistenceUniquenessES}
Let $f\:S^2 \rightarrow S^2$ be an expanding Thurston map and $d$ be a visual metric on $S^2$ for $f$. Let $\phi$ be a real-valued H\"{o}lder continuous function on $S^2$ with respect to the metric $d$. Then there exists a unique equilibrium state $\mu_\phi$ for the map $f$ and the potential $\phi$. 
\end{theorem}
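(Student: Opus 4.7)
The plan is to follow the standard thermodynamic-formalism strategy (Ruelle--Perron--Frobenius theory) adapted to the tile/flower geometry available for expanding Thurston maps. First I would pass, via Lemma~\ref{lmCexistsL}, to an iterate $f^N$ that admits an invariant Jordan curve $\CC \supseteq \post f$; since equilibrium states for $f^N$ and $\psi_N := S_N \phi$ are in bijection with those for $f$ and $\phi$, it suffices to work with the cellular Markov partition $(\DD^1,\DD^0)$ induced by $(f,\CC)$. The key advantage of this reduction is that Proposition~\ref{propCellDecomp} together with Lemma~\ref{lmCellBoundsBM} and Lemma~\ref{lmMetricDistortion} gives me uniform exponential decay of tile diameters (with factor $\Lambda^{-n}$) and a uniform metric distortion estimate for $f^n$ on any $n$-tile, which together imply that for a Hölder potential $\phi\in\Holder{\alpha}(S^2,d)$ the Birkhoff sums $S_n \phi$ satisfy a bounded-distortion inequality on each $n$-tile of the form $|S_n\phi(x)-S_n\phi(y)|\le C \sum_{k=0}^{n-1}(\Lambda^{-(n-k)})^\alpha \le C'$.

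Next I would introduce the Ruelle transfer operator
\begin{equation*}
(\RR_\phi u)(x) = \sum_{y\in f^{-1}(x)} \deg_f(y)\, e^{\phi(y)} u(y), \qquad u\in\CCC(S^2),
\end{equation*}
and study its iterates. Using the bounded distortion estimate above, I would show that $\{\RR_\phi^n \mathbbm{1}\}$ is a family of positive continuous functions with uniformly bounded oscillation on each $n$-tile, whence by a standard equicontinuity/compactness argument there exists an eigenmeasure $\nu_\phi\in\PPP(S^2)$ for the dual operator $\RR_\phi^*$, with eigenvalue $e^{P(f,\phi)}$. A convex-combination argument on the sequence $\frac{1}{n}\sum_{k=0}^{n-1} e^{-kP(f,\phi)} \RR_\phi^k \mathbbm{1}$ then produces a continuous eigenfunction $h_\phi>0$, and the candidate equilibrium state is $\mu_\phi := h_\phi\, \nu_\phi$. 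That $\mu_\phi\in\MMM(S^2,f)$ follows from the eigenfunction/eigenmeasure relation, and the variational identity $h_{\mu_\phi}(f)+\int\!\phi\,\mathrm{d}\mu_\phi = P(f,\phi)$ is deduced from Abramov's formula applied to the tower of tiles (or directly from the Rokhlin formula combined with the Jacobian $\log\deg_f - \phi + P(f,\phi) - \log(h_\phi\circ f) + \log h_\phi$).

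For uniqueness I would exploit the Gibbs property of $\nu_\phi$: the bounded-distortion estimate translates into the statement that for every $n$-tile $X^n$ and every $x\in X^n$,
\begin{equation*}
C_1^{-1}\, e^{S_n\phi(x) - n P(f,\phi)} \;\le\; \nu_\phi(X^n) \;\le\; C_1\, e^{S_n\phi(x) - n P(f,\phi)},
\end{equation*}
and then run a standard argument (Bowen, Walters, Ruelle): any other equilibrium state $\mu'$ must be absolutely continuous with respect to $\nu_\phi$, and the ergodicity of $\mu_\phi$ (proved by showing $\mathrm{supp}\,\mu_\phi=S^2$ and using Lemma~\ref{lmPreImageDense} together with the Gibbs estimate to establish a local product structure on tiles) forces $\mu'=\mu_\phi$.

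The main obstacle is the presence of critical points, which prevents $f$ from being a local homeomorphism and breaks the usual distance-expanding framework of \cite{PU10}. The resolution is to work everywhere with the tile decomposition rather than with metric balls: the key Lemma~\ref{lmMetricDistortion} tells me that $f^n$ acts on each $n$-tile as a $\Lambda^n$-bilipschitz map up to a uniform multiplicative constant \emph{regardless} of whether critical points lie inside the tile, and this is precisely what replaces the usual local injectivity in the distortion bounds and in the Gibbs-property proof. Getting the weights $\deg_f(y)$ correctly into $\RR_\phi$ (so as to match the weighted preimage sums appearing throughout the paper, e.g.\ in Proposition~\ref{propWeakConvPreImgWithWeight}) is the other subtle point, and it is essential both for the variational equality and for the ergodicity argument.
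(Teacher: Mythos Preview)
The paper does not actually prove Theorem~\ref{thmExistenceUniquenessES}; it is quoted verbatim from \cite[Theorem~1.1]{Li14} and used here as a black box. What the present paper does prove is the weaker Theorem~\ref{thmExistenceES} (existence only, for continuous potentials, under the additional hypothesis of no periodic critical points), and that proof is a two-line soft argument: Corollary~\ref{corUSC} gives upper semi-continuity of $\mu\mapsto h_\mu(f)$, so $\mu\mapsto P_\mu(f,\psi)$ attains its supremum on the weak$^*$-compact set $\MMM(S^2,f)$.

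Your outline is a reasonable sketch of the \cite{Li14} argument rather than of anything in this paper; the Ruelle operator $\RR_\phi$, the eigenmeasure $m_\phi$, and the eigenfunction $u_\phi$ you describe are exactly the objects the paper recalls from \cite{Li14} in the paragraph following Theorem~\ref{thmExistenceUniquenessES} and in (\ref{eqDefPhiTilde}). One point worth flagging in your uniqueness sketch: the Gibbs inequality you write for $n$-tiles is correct (since $f^n|_{X^n}$ is a homeomorphism onto a $0$-tile, no local-degree factor appears), but the passage from ``Gibbs $+$ ergodic'' to uniqueness is not quite the standard Bowen argument, because $f$ is not forward expansive and the cell decompositions $\DD^n$ are not generating partitions in the measure-theoretic sense when periodic critical points are present. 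In \cite{Li14} this is handled by a more hands-on analysis of the Jacobian and the measure of the $1$-skeleton; your sketch glosses over precisely this difficulty.
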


Let $f$, $d$, $\phi$, $\alpha$ satisfy the Assumptions. Recall that the main tool we used in \cite{Li14} to prove this theorem is the Ruelle operator $\RR_\phi$. We summarize relevant definitions and facts about the Ruelle operator below and refer the reader to \cite[Section~5]{Li14} for a detailed discussion.

Let $\psi\in \CCC(S^2)$ be a continuous function. Recall that $f\: S^2 \rightarrow S^2$ is an expanding Thurston map. The \defn{Ruelle operator} $\RR_\psi$ on $\CCC(S^2)$ is defined as the following
\begin{equation}   \label{eqDefRuelleOp}
\RR_\psi(u)(x)= \sum\limits_{y\in f^{-1}(x)}  \deg_f(y) u(y) \exp(\psi(y)),
\end{equation}
for each $u\in \CCC(S^2)$. Note that $\RR_\psi$ is a well-defined, positive, continuous operator on $\CCC(S^2)$. The adjoint operator $\RR_\psi^*\: \CCC^*(S^2)\rightarrow \CCC^*(S^2)$ of $\RR_\psi$ acts on the dual space $\CCC^*(S^2)$ of the Banach space $\CCC(S^2)$. We identify $\CCC^*(S^2)$ with the space $\MMM(S^2)$ of finite signed Borel measures on $S^2$ by the Riesz representation theorem.

\begin{proof}[Proof of Theorem~\ref{thmExistenceES}]
By Alaoglu's theorem, the space $\MMM(S^2,f)$ of $f$-invariant Borel probability measures equipped with the weak$^*$ topology is compact. Since the measure-theoretic entropy $\mu\mapsto h_\mu(f)$ is upper semi-continuous by Corollary~\ref{corUSC}, so is $\mu\mapsto P_\mu(f,\psi)$ by (\ref{defMeasTheoPressure}). Thus $\mu\mapsto P_\mu(f,\psi)$ attains its supremum over $\MMM(S^2,f)$ at a measure $\mu_\psi$, which by the Variational Principle (\ref{eqVPPressure}) is an equilibrium state for the map $f$ and the potential $\psi$.
\end{proof}

\subsection{Level-2 large deviation principles}   \label{subsctLDP}

Let $X$ be a compact metrizable topological space. Recall that $\PPP(X)$ is the set of Borel probability measures on $X$. We equip $\PPP(X)$ with the weak$^*$ topology.  Note this topology is metrizable (see for example, \cite[Theorem~5.1]{Con85}). Let $I\: \PPP(X) \rightarrow [0,+\infty]$ be a \defn{lower semi-continuous function}, i.e., $I$ satisfy the condition that $\liminf_{y\to x} I(y)\geq I(x)$ for all $x\in \PPP(X)$.

A sequence $\{\Omega_n \}_{n\in\N}$ of Borel probability measures on $\PPP(X)$ is said to satisfy a \defn{large deviation principle with rate function $I$} if for each closed subset $\FF$ of $\PPP(X)$ and each open subset $\GG$ of $\PPP(X)$ we have 
$$
\limsup\limits_{n\to+\infty} \frac1n \log \Omega_n(\FF) \leq - \inf\{I(x) \,|\, x\in\FF \},
$$
and
$$
\liminf\limits_{n\to+\infty} \frac1n \log \Omega_n(\GG) \geq - \inf\{I(x) \,|\, x\in\GG \}.
$$

We will apply the following theorem due to Y.~Kifer \cite[Theorem~4.3]{Ki90}, reformulated by H.~Comman and J.~Rivera-Letelier \cite[Theorem~C]{CRL11}.

\begin{theorem}[Y.~Kifer, 1990; H.~Comman \& J.~Rivera-Letelier, 2011]    \label{thmAbsLargeDeviationPrincipleCRL}
Let $X$ be a compact metrizable topological space, and let $g\: X\rightarrow X$ be a continuous map. Fix $\phi\in\CCC(X)$, and let $H$ be a dense vector subspace of $\CCC(X)$ with respect to the uniform norm. Let $I^\phi \: \PPP(X) \rightarrow [0,+\infty]$ be the function defined by
\begin{equation*}
I^\phi (\mu) = 
\left\{
	\begin{array}{ll}
		P(g,\phi) -\int\! \phi \,\mathrm{d}\mu - h_\mu(g)  & \mbox{if } \mu\in\MMM(X,g); \\
		+\infty & \mbox{if } \mu\in\PPP(X) \setminus \MMM(X,g).
	\end{array}
\right.
\end{equation*}
We assume the following conditions are satisfied:
\begin{enumerate}

\smallskip
\item[(i)] The measure-theoretic entropy $h_\mu(g)$ of $g$, as a function of $\mu$ defined on $\MMM(X,g)$ (equipped with the weak$^*$ topology), is finite and upper semi-continuous.

\smallskip
\item[(ii)] For each $\psi \in H$, there exists a unique equilibrium state for the map $g$ and the potential $\phi+\psi$.
\end{enumerate}

Then every sequence $\{ \Omega_n \}_{n\in\N}$ of Borel probability measures on $\PPP(X)$ such that for each $\psi \in H$,
\begin{equation}   \label{eqThmCCondition}
\lim\limits_{n\to+\infty} \frac{1}{n} \log \int_{\PPP(X)} \! \exp \(n\int\!\psi \,\mathrm{d}\mu \) \,\mathrm{d}\Omega_n(\mu) = P(g,\phi+\psi) - P(g,\phi),
\end{equation}
satisfies a large deviation principle with rate function $I^\phi$, and it converges in the weak$^*$ topology to the Dirac measure supported on the unique equilibrium state for the map $g$ and the potential $\phi$. Furthermore, for each convex open subset $\GG$ of $\PPP(X)$ containing some invariant measure, we have
\begin{equation*}
\lim\limits_{n\to+\infty}  \frac{1}{n} \log \Omega_n(\GG) 
= \lim\limits_{n\to+\infty}  \frac{1}{n} \log \Omega_n\big(\overline{\GG}\big) 
= -\inf\limits_{\GG} I^\phi 
= -\inf\limits_{\overline{\GG}} I^\phi.
\end{equation*}
\end{theorem}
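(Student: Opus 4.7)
The plan is to establish the large deviation principle via the classical Gärtner--Ellis/Varadhan duality scheme, interpreting the hypothesis \eqref{eqThmCCondition} as saying that the logarithmic moment generating functional of $\Omega_n$ converges to the shifted pressure functional $\psi \mapsto P(g,\phi+\psi)-P(g,\phi)$, and then recovering $I^\phi$ as its Legendre transform on $\PPP(X)$. The key observation that makes this scheme work is the abstract duality
\[
P(g,\phi+\psi)-P(g,\phi)=\sup_{\mu\in\PPP(X)}\left\{\int\!\psi\,\mathrm{d}\mu - I^\phi(\mu)\right\},
\]
which is a direct consequence of the variational principle \eqref{eqVPPressure}. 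Hypothesis (i) ensures that the functional $\mu\mapsto -I^\phi(\mu)$ is upper semi-continuous on $\PPP(X)$ (extended by $-\infty$ off $\MMM(X,g)$), so that $I^\phi$ is a bona fide rate function, and hypothesis (ii) gives uniqueness of the maximizer of the above supremum for each $\psi\in H$.

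For the \emph{upper bound}, I would fix a closed set $\FF\subseteq \PPP(X)$ and a function $\psi\in H$, and apply the Markov inequality in the form
\[
\Omega_n(\FF)\leq \exp\!\left(-n\inf_{\mu\in\FF}\int\!\psi\,\mathrm{d}\mu\right)\int_{\PPP(X)}\!\exp\!\left(n\int\!\psi\,\mathrm{d}\mu\right)\mathrm{d}\Omega_n(\mu).
\]
Taking $\frac{1}{n}\log$ and invoking \eqref{eqThmCCondition} gives
\[
\limsup_{n\to+\infty}\frac{1}{n}\log\Omega_n(\FF)\leq P(g,\phi+\psi)-P(g,\phi)-\inf_{\mu\in\FF}\int\!\psi\,\mathrm{d}\mu.
\]
Taking the infimum over $\psi\in H$, exchanging inf and sup via a minimax theorem (using compactness of $\PPP(X)$ together with the upper semi-continuity coming from (i)), and then using the density of $H$ in $\CCC(X)$ to extend the duality, one obtains $\limsup_{n\to+\infty}\frac{1}{n}\log\Omega_n(\FF)\leq -\inf_{\FF}I^\phi$. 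The density of $H$ is used here through a uniform approximation argument that shows the pressure functional $\psi\mapsto P(g,\phi+\psi)$ is $1$-Lipschitz in the uniform norm, so that convergence in \eqref{eqThmCCondition} on a dense subspace suffices.

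For the \emph{lower bound}, I would fix an open set $\GG\subseteq \PPP(X)$ and a measure $\nu\in\GG\cap\MMM(X,g)$ with $I^\phi(\nu)<+\infty$. The strategy is to tilt: one approximates $\nu$ in $\PPP(X)$ by equilibrium states $\mu_{\phi+\psi_k}$ for a suitably chosen sequence $\{\psi_k\}\subseteq H$, using uniqueness (ii) and the density of $H$ to ensure that these equilibrium states can be steered into $\GG$. A standard change-of-measure (Radon--Nikodym) computation against the tilted integrals $\int\exp(n\int\psi_k\,\mathrm{d}\mu)\,\mathrm{d}\Omega_n(\mu)$, combined with \eqref{eqThmCCondition}, yields $\liminf_{n\to+\infty}\frac{1}{n}\log\Omega_n(\GG)\geq P(g,\phi+\psi_k)-P(g,\phi)-\int\!\psi_k\,\mathrm{d}\mu_{\phi+\psi_k}-h_{\mu_{\phi+\psi_k}}(g)+h_{\mu_{\phi+\psi_k}}(g)$, which when carefully tracked reduces to $-I^\phi(\mu_{\phi+\psi_k})$. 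Letting $k\to\infty$ and using upper semi-continuity of $I^\phi$ on the limit gives $\geq -I^\phi(\nu)$, and taking the supremum over $\nu\in\GG$ completes the lower bound. The convergence $\Omega_n\to\delta_{\mu_\phi}$ and the refined conclusion for convex open sets follow by standard large-deviation arguments: $I^\phi$ vanishes only at the unique equilibrium state for $\phi$ by (ii), and convexity of $\GG$ ensures $\inf_{\GG}I^\phi=\inf_{\overline\GG}I^\phi$ via a convex-combination approximation.

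The \emph{main obstacle} is the lower bound: constructing the tilted measures and justifying that the equilibrium states $\mu_{\phi+\psi_k}$ can be used to approximate an arbitrary $\nu\in\MMM(X,g)$ with $I^\phi(\nu)<+\infty$. This is precisely where both assumptions (i) and (ii) are indispensable; upper semi-continuity of entropy guarantees that the map $\psi\mapsto\mu_{\phi+\psi}$ has enough continuity to realize the supremum in the duality for each $\psi$, while uniqueness of equilibrium states prevents the tilting procedure from being spoiled by phase transitions. The density of $H$ then lets us approximate any tangent functional to the pressure by one arising from an actual potential in $H$, which is the essential mechanism by which \eqref{eqThmCCondition} is converted into a full large deviation principle.
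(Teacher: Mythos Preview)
The paper does not contain a proof of this theorem: it is quoted verbatim from the literature (Kifer \cite[Theorem~4.3]{Ki90}, as reformulated by Comman and Rivera-Letelier \cite[Theorem~C]{CRL11}) and is used as a black box in the proof of Theorem~\ref{thmLDP}. There is therefore no ``paper's own proof'' to compare your attempt against.

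That said, your sketch follows the standard G\"artner--Ellis/Legendre-duality route that underlies the cited references, and the overall architecture (Chebyshev for the upper bound, tilting for the lower bound, pressure as the Legendre transform of $I^\phi$) is correct. The one place where your outline is genuinely thin is the lower bound: you assert that an arbitrary $\nu\in\MMM(X,g)$ with $I^\phi(\nu)<+\infty$ can be approximated by equilibrium states $\mu_{\phi+\psi_k}$ with $\psi_k\in H$ in such a way that $I^\phi(\mu_{\phi+\psi_k})\to I^\phi(\nu)$, but you do not explain why such $\psi_k$ exist. This is the crux of the argument and is not a consequence of density of $H$ alone; it requires the convex-analytic fact that under (i) the pressure functional $\psi\mapsto P(g,\phi+\psi)$ is G\^ateaux differentiable at every $\psi\in H$ (with derivative the unique equilibrium state, by (ii)), so that equilibrium states are exactly the exposed points of $-I^\phi$, and then an appeal to a result of Mazur/Lanford type guaranteeing that exposed points are dense among the subgradients. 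Without this step the tilting argument does not go through for general $\nu$. If you want to turn your sketch into a self-contained proof, this is the piece you would need to supply; otherwise, citing \cite{Ki90} or \cite{CRL11} as the paper does is the appropriate move.
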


Recall that $P(g,\phi)$ is the topological pressure of the map $g$ with respect to the potential $\phi$.

In our context, $X=S^2$, the map $g=f$ where $f\: S^2\rightarrow S^2$ is an expanding Thurston map with no periodic critical points. Fix a visual metric $d$ on $S^2$ for $f$. The function $\phi$ is a real-valued H\"older continuous function with an exponent $\alpha\in (0,1]$. Then $H=\Holder{\alpha}(S^2,d)$ is the space of real-valued H\"older continuous functions with the exponent $\alpha$ on $(S^2,d)$. Note that $\Holder{\alpha}(S^2,d)$ is dense in $\CCC(S^2)$ (equipped with the uniform norm) (see for example, \cite[Lemma~6.12]{Li14}). Condition~(i) is satisfied by Corollary~\ref{corUSC}. Condition~(ii) is guaranteed by Theorem~\ref{thmExistenceUniquenessES}. Thus we just need to verify (\ref{eqThmCCondition}) for the sequences we will consider in this section.

\subsection{Technical lemmas}    \label{subsctLDPLemmas}
\begin{lemma}    \label{lmAtMostOneCritPerTile}
Let $f$, $\CC$, $d$, $\Lambda$ satisfy the Assumptions. Then there exists $N_0\in\N$ such that for each $n\geq N_0$ and each $n$-tile $X^n\in\X^n(f,\CC)$, the number of fixed points of $f^n$ contained in $X^n$ is at most $1$.
\end{lemma}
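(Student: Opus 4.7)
The plan is to derive the bound directly from the metric distortion estimate in Lemma~\ref{lmMetricDistortion} applied to an $n$-tile under its $n$-th iterate. Because the Assumptions guarantee that $\CC$ is an $f^{n_\CC}$-invariant Jordan curve containing $\post f$, Lemma~\ref{lmMetricDistortion} applies with some constant $C_0>1$ depending only on $f$, $d$, $\CC$, and $n_\CC$.

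Suppose $X^n\in\X^n(f,\CC)$ is an $n$-tile and that $x,y\in X^n$ are two fixed points of $f^n$. I would apply Lemma~\ref{lmMetricDistortion} with the choice $k=0$ and the same $n$ (so the relevant $(n+k)$-tile is $X^n$ itself). This yields
\begin{equation*}
\frac{1}{C_0}\, d(x,y) \;\leq\; \frac{d(f^n(x),f^n(y))}{\Lambda^n} \;\leq\; C_0\, d(x,y).
\end{equation*}
Since $f^n(x)=x$ and $f^n(y)=y$, the left-hand inequality becomes
\begin{equation*}
\frac{1}{C_0}\, d(x,y) \;\leq\; \frac{d(x,y)}{\Lambda^n},
\end{equation*}
i.e.\ $(\Lambda^n - C_0)\, d(x,y) \leq 0$.

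Now choose $N_0\in\N$ large enough that $\Lambda^{N_0} > C_0$ (possible since $\Lambda>1$, e.g.\ $N_0 := \lfloor \log_\Lambda C_0 \rfloor + 1$). For every $n\geq N_0$ the coefficient $\Lambda^n - C_0$ is strictly positive, so the inequality above forces $d(x,y)=0$, and hence $x=y$. This proves that each $n$-tile $X^n$ with $n\geq N_0$ contains at most one fixed point of $f^n$. I do not expect any substantive obstacle here: the entire argument is a direct packaging of the bi-Lipschitz metric distortion estimate from Lemma~\ref{lmMetricDistortion}, combined with the fact that a fixed point of $f^n$ has its image under $f^n$ equal to itself, so $d(f^n(x),f^n(y))$ cannot grow like $\Lambda^n d(x,y)$ unless $d(x,y)=0$.
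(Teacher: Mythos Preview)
Your proof is correct and follows essentially the same approach as the paper's own proof: both invoke Lemma~\ref{lmMetricDistortion} with $k=0$ on the $n$-tile, use the lower bound $d(f^n(x),f^n(y))\geq \frac{\Lambda^n}{C_0}d(x,y)$, substitute $f^n(x)=x$ and $f^n(y)=y$, and choose $N_0$ so that $\Lambda^{N_0}>C_0$. The only cosmetic difference is that the paper phrases the final step as a contradiction ($1\geq \Lambda^n/C_0>1$ for distinct $p,q$), whereas you conclude directly that $d(x,y)=0$.
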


\begin{proof}
By Lemma~\ref{lmMetricDistortion}, for each $i\in\N$, each $i$-tile $X^i\in\X^i(f,\CC)$, and each pair of points $x,y\in X^i$, we have
$$
d\(f^i(x),f^i(y)\) \geq \frac{\Lambda^i}{C_0} d(x,y),
$$
where $C_0>1$ is a constant depending only on $f$ and $d$ from Lemma~\ref{lmMetricDistortion}.

We choose $N_0\in\N$ such that $\Lambda^{N_0} > C_0$.

Let $n\geq N_0$ and $X^n\in\X^n(f,\CC)$. Suppose two distinct points $p,q\in X$ satisfy $f^n(p)=p$ and $f^n(q)=q$. Then
$$
1=\frac{d\(f^n(p),f^n(q)\)}{d(p,q)} \geq \frac{\Lambda^n}{C_0} >1,
$$
a contradiction. This completes the proof.
\end{proof}

This lemma in some sense generalizes Lemma~4.3 in \cite{Li13} to Jordan curves that are not necessarily $f$-invariant, but $f^{n_c}$-invariant for some $n_c\in\N$. The conclusions of both lemmas hold when $n$ is sufficiently large, which is a combinatorial condition in Lemma~4.3 in \cite{Li13} and a metric condition for Lemma~\ref{lmAtMostOneCritPerTile} here. The proof of Lemma~\ref{lmAtMostOneCritPerTile} is simpler, but the proof of Lemma~4.3 in \cite{Li13} is more self-contained.

By Lemma~5.1 and Lemma~5.2 in \cite{Li14}, we have the following distortion bounds.

\begin{lemma}   \label{lmSnPhiBound}
Let $f$, $\CC$, $d$, $L$, $\Lambda$, $\phi$, $\alpha$ satisfy the Assumptions. Then there exist constants $C_1>0$ and $C_2\geq 1$ depending only on $f$, $\CC$, $d$, $\phi$, and $\alpha$ such that
\begin{equation}  \label{eqSnPhiBound}
\Abs{S_m\phi(x)-S_m\phi(y)}  \leq C_1 d(f^m(x),f^m(y))^\alpha,
\end{equation}
and
\begin{equation}   \label{eqSigmaExpSnPhiBound}
\frac{\sum\limits_{x'\in f^{-l}(x_0)}  \deg_{f^l}(x')  \exp (S_l\phi(x'))}{\sum\limits_{y'\in f^{-l}(y_0)}  \deg_{f^l}(y')  \exp (S_l\phi(y'))}\leq C_2,
\end{equation}
for $n,m,l\in\N_0$ with $m\leq n $, $X^n\in\X^n(f,\CC)$, $x,y\in X^n$, and $x_0,y_0\in S^2$. 
\end{lemma}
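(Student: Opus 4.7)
The plan is to prove the two estimates in sequence: establish the pointwise Birkhoff-sum distortion bound (\ref{eqSnPhiBound}) first, and then use it to control the ratio in (\ref{eqSigmaExpSnPhiBound}) via a tile-by-tile pairing of preimages.

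For (\ref{eqSnPhiBound}), fix $x, y \in X^n$ and any $j \in \{0, 1, \ldots, m-1\}$. Proposition~\ref{propCellDecomp}(i) guarantees that $f^j(X^n) \in \X^{n-j}$ is an $(n-j)$-tile containing both $f^j(x)$ and $f^j(y)$. Writing $n - j = (m-j) + (n-m)$ and applying Lemma~\ref{lmMetricDistortion} to the tile $f^j(X^n)$ under the iterate $f^{m-j}$, I would obtain
$$d(f^j(x), f^j(y)) \leq C_0 \Lambda^{-(m-j)} d(f^m(x), f^m(y)).$$
Combined with the H\"older property of $\phi$, this gives $\abs{\phi(f^j(x)) - \phi(f^j(y))} \leq \Hseminorm{\alpha}{\phi} C_0^\alpha \Lambda^{-\alpha(m-j)} d(f^m(x), f^m(y))^\alpha$. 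Summing the geometric series $\sum_{j=0}^{m-1} \Lambda^{-\alpha(m-j)} \leq 1/(\Lambda^\alpha - 1)$ then yields the bound with $C_1 := \Hseminorm{\alpha}{\phi} C_0^\alpha / (\Lambda^\alpha - 1)$.

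For (\ref{eqSigmaExpSnPhiBound}) I would exploit the combinatorial structure of the $l$-th cell decomposition to pair preimages. Denote the two $0$-tiles by $W, B \in \X^0$ (``white'' and ``black''). For any $l$-tile $X^l$ with $f^l(X^l) = W$, Proposition~\ref{propCellDecomp}(i) gives, for each $x_0 \in W$, a unique preimage $x'^W_{X^l} \in X^l$ of $x_0$ under $f^l$. Since $\crit(f^l) \subseteq \V^l = f^{-l}(\post f)$, whenever $x_0 \notin \post f$ every preimage has local degree $1$, and combined with (\ref{eqDeg=SumLocalDegree}) this identifies $\sum_{x' \in f^{-l}(x_0)} \deg_{f^l}(x') \exp(S_l \phi(x'))$ with $\sum_{X^l \text{ white}} \exp(S_l \phi(x'^W_{X^l}))$; the analogous formula holds through black $l$-tiles for $x_0 \in B$, and the two representations agree along $\CC$ because the full sum $\RR_\phi^l \mathbbm{1}$ is continuous on $S^2$.

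When $x_0, y_0$ lie in a common closed $0$-tile, pairing tile by tile places $x'_{X^l}$ and $y'_{X^l}$ in the same $l$-tile $X^l$, so (\ref{eqSnPhiBound}) applied with $n = m = l$ yields $\abs{S_l \phi(x'_{X^l}) - S_l \phi(y'_{X^l})} \leq C_1 (\diam_d S^2)^\alpha$; dividing term by term and summing then bounds the ratio of the two sums by $M := \exp(C_1 (\diam_d S^2)^\alpha)$. For general $x_0, y_0 \in S^2$, I would pick an auxiliary $z_0 \in \CC$: since $x_0$ and $z_0$ share a closed $0$-tile, as do $z_0$ and $y_0$, chaining the two estimates through $z_0$ gives the bound with $C_2 := M^2$. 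The main subtlety will be the bookkeeping when $x_0$ or $y_0$ lies on $\CC$ or in $\post f$, but this is resolved cleanly by the continuity of $\RR_\phi^l \mathbbm{1}$ together with the agreement of the white- and black-tile representations along $\CC$, which lets one first prove the inequality for a dense set and then extend it.
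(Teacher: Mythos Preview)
Your argument is correct and is precisely the standard route: the paper does not give an independent proof here but simply cites Lemma~5.1 and Lemma~5.2 of \cite{Li14}, where the arguments proceed along the same lines you describe---(\ref{eqSnPhiBound}) from Lemma~\ref{lmMetricDistortion} and a geometric series, and (\ref{eqSigmaExpSnPhiBound}) from (\ref{eqSnPhiBound}) by a tile-by-tile pairing of preimages together with continuity of $\RR_\phi^l\mathbbm{1}_{S^2}$ to handle the boundary cases.
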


Note that due to the convention described in Section~\ref{sctAssumptions}, we do not say that $C_1$ and $C_2$ depend on $\Lambda$ or $n_\CC$.

We record the following well-known lemma, sometimes known as the Portmanteau Theorem, and refer the reader to \cite[Theorem~2.1]{Bi99} for a proof.

\begin{lemma}   \label{lmPortmanteau}
Let $(X,d)$ be a compact metric space, and $\mu$ and $\mu_i$, for $i\in\N$, be Borel probability measures on $X$. Then the following are equivalent:
\begin{enumerate}
\smallskip

\item[(i)] $\mu_i \stackrel{w^*}{\longrightarrow} \mu$ as $i\longrightarrow +\infty$;

\smallskip

\item[(ii)] $\limsup\limits_{i\to+\infty} \mu_i(F) \leq \mu(F)$ for each closed set $F\subseteq X$;

\smallskip

\item[(iii)] $\liminf\limits_{i\to+\infty} \mu_i(G) \geq \mu(G)$ for each open set $G\subseteq X$;

\smallskip

\item[(iv)] $\lim\limits_{i\to+\infty} \mu_i(B)= \mu(B)$ for each Borel set $B\subseteq X$ with $\mu(\partial B) = 0$.
\end{enumerate}

\end{lemma}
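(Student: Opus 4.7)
The plan is to establish the equivalence by the standard cyclic chain of implications (i) $\Rightarrow$ (ii) $\Leftrightarrow$ (iii) $\Rightarrow$ (iv) $\Rightarrow$ (i), exploiting the fact that on a metric space, indicator functions of closed sets can be approximated from above by Lipschitz functions.

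First I would prove (i) $\Rightarrow$ (ii). For a closed set $F\subseteq X$, define the continuous (in fact Lipschitz) functions $g_k(x) = \max\{0,\, 1 - k\cdot d(x,F)\}$ for $k\in\N$. Then $g_k \geq \mathbbm{1}_F$ pointwise, $g_k \downarrow \mathbbm{1}_F$ as $k\to+\infty$, and $0\leq g_k\leq 1$. Weak$^*$ convergence gives
\[
\limsup_{i\to+\infty} \mu_i(F) \leq \limsup_{i\to+\infty} \int\! g_k \,\mathrm{d}\mu_i = \int\! g_k \,\mathrm{d}\mu
\]
for each $k$, and letting $k\to+\infty$ via dominated convergence (with dominating function $1$, which is $\mu$-integrable since $\mu$ is a probability measure) yields $\limsup_{i\to+\infty} \mu_i(F) \leq \mu(F)$. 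The equivalence (ii) $\Leftrightarrow$ (iii) is immediate by taking complements: for open $G = X\setminus F$, we have $\mu_i(G) = 1 - \mu_i(F)$ and $\mu(G) = 1 - \mu(F)$.

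Next I would prove that (ii) and (iii) together imply (iv). Given a Borel set $B\subseteq X$ with $\mu(\partial B) = 0$, the interior $\inter B$ is open, the closure $\overline{B}$ is closed, and $\mu(\inter B) = \mu(B) = \mu(\overline{B})$. Applying (iii) to $\inter B$ and (ii) to $\overline{B}$:
\[
\mu(B) = \mu(\inter B) \leq \liminf_{i\to+\infty} \mu_i(\inter B) \leq \liminf_{i\to+\infty} \mu_i(B),
\]
\[
\limsup_{i\to+\infty} \mu_i(B) \leq \limsup_{i\to+\infty} \mu_i(\overline{B}) \leq \mu(\overline{B}) = \mu(B),
\]
which forces $\lim_{i\to+\infty} \mu_i(B) = \mu(B)$.

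Finally, for (iv) $\Rightarrow$ (i), I would use the layer-cake representation. Given $u\in\CCC(X)$, it suffices by linearity and translation to assume $0\leq u\leq 1$. Then Fubini gives
\[
\int\! u \,\mathrm{d}\nu = \int_0^1 \! \nu(\{u > t\}) \,\mathrm{d} t
\]
for any Borel probability measure $\nu$. Each set $\{u > t\}$ is open by continuity of $u$, with $\partial\{u > t\} \subseteq \{u = t\}$. Since the sets $\{u = t\}$ for varying $t$ are pairwise disjoint Borel subsets of $X$ and $\mu$ is a probability measure, at most countably many of them can have positive $\mu$-measure. Hence $\mu(\partial\{u > t\}) = 0$ for Lebesgue-a.e.\ $t\in[0,1]$, so (iv) gives $\mu_i(\{u > t\}) \to \mu(\{u > t\})$ for a.e.\ $t$. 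Bounded convergence in $t$ then yields $\int u\,\mathrm{d}\mu_i \to \int u\,\mathrm{d}\mu$, establishing weak$^*$ convergence. The main (minor) technical point is the almost-everywhere continuity argument in this last step; everything else is routine approximation. Compactness of $X$ is not essential for the argument but ensures that all the continuous functions encountered are bounded, simplifying the dominated/bounded convergence applications.
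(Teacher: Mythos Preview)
Your proof is correct and is essentially the standard argument for the Portmanteau theorem. The paper, however, does not supply a proof of this lemma at all: it simply records the statement as well known and refers the reader to \cite[Theorem~2.1]{Bi99}. So there is nothing to compare against in terms of approach; your write-up provides precisely the kind of self-contained argument the paper chose to omit.
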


\subsection{Characterizations of the pressure $P(f,\phi)$}    \label{subsctTopPres}
Let $f$, $d$, $\phi$, $\alpha$ satisfy the Assumptions. We denote by $m_\phi$ the unique eigenmeasure of $\RR^*_\phi$, i.e., the unique Borel probability measure on $S^2$ that satisfies $\RR^*_\phi(m_\phi)=c m_\phi$ for some constant $c\in \R$ (compare \cite[Theorem~5.11 and Corollary~6.10]{Li14}). As in \cite{Li14}, we denote $\overline\phi=\phi-P(f,\phi)$, and
\begin{equation}  \label{eqDefPhiTilde}
\widetilde\phi = \phi - P(f,\phi) +\log u_\phi - \log (u_\phi\circ f),
\end{equation}
where $u_\phi$ is the unique eigenfunction, upto scalar multiplication, of $\RR_{\overline\phi}$ corresponding to the eigenvalue $1$, which is also the uniform limit of the sequence of continuous functions $\Big\{ \frac{1}{n} \sum\limits_{j=0}^{n-1} \RR^j_{\overline\phi} (\mathbbm{1}_{S^2})  \Big\}_{n\in\N}$ (compare Theorem~5.16 and the remark that follows it in \cite{Li14}).

We record the following result on equidistribution of preimages with respect to the equilibrium state $\mu_\phi$ and the measure $m_\phi$ (see \cite[Proposition~9.1]{Li14}). We will only use (\ref{eqWeakConvPreImgToMWithWeight}) in this paper, namely, in the proof of Proposition~\ref{propPressureLimitPeriodicPts}.

\begin{prop}  \label{propWeakConvPreImgWithWeight}
Let $f$, $d$, $\phi$, $\alpha$ satisfy the Assumptions. Let $\mu_\phi$ be the unique equilibrium state for $f$ and $\phi$, let $m_\phi$ be the unique eigenmeasure of $\RR^*_\phi$, and $\widetilde\phi$ as defined in (\ref{eqDefPhiTilde}). For each sequence $\{ x_n \}_{n\in\N}$ of points in $S^2$, we define the Borel probability measures
\begin{align}
           \xi_n & = \frac{1}{Z_n(\phi)} \sum\limits_{y\in f^{-n}(x_n)} \deg_{f^n} (y) \exp\(S_n\phi(y)\) \delta_y,  \\
   \widehat\xi_n & = \frac{1}{Z_n(\phi)} \sum\limits_{y\in f^{-n}(x_n)} \deg_{f^n} (y) \exp\(S_n\phi(y)\) \frac{1}{n} \sum\limits_{i=0}^{n-1} \delta_{f^i(y)},\\
 \widetilde\xi_n & = \frac{1}{Z_n\big(\widetilde\phi \hspace{0.5mm} \big)} \sum\limits_{y\in f^{-n}(x_n)} \deg_{f^n} (y) \exp(S_n\widetilde\phi(y)) \delta_y, 
\end{align}
for each $n\in\N_0$, where $Z_n(\psi) = \sum\limits_{y\in f^{-n}(x_n)} \deg_{f^n} (y) \exp\(S_n\psi(y)\)$, for $\psi\in\CCC(S^2)$. Then
\begin{equation}  \label{eqWeakConvPreImgToMWithWeight}
\xi_n \stackrel{w^*}{\longrightarrow} m_\phi \text{ as } n\longrightarrow +\infty,
\end{equation}
\begin{equation}  \label{eqWeakConvPreImgToMuSumWithWeight}
\widehat\xi_n \stackrel{w^*}{\longrightarrow} \mu_\phi \text{ as } n\longrightarrow +\infty,
\end{equation}
\begin{equation}  \label{eqWeakConvPreImgToMuTildeWithWeight}
\widetilde\xi_n \stackrel{w^*}{\longrightarrow} \mu_\phi \text{ as } n\longrightarrow +\infty.
\end{equation}
\end{prop}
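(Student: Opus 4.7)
The plan is to express $\int u \, \mathrm{d}\xi_n$, $\int u \, \mathrm{d}\widetilde\xi_n$, and $\int u \, \mathrm{d}\widehat\xi_n$ (for an arbitrary test function $u \in \CCC(S^2)$) as ratios of iterates of the Ruelle operator $\RR_\phi$ evaluated at $x_n$, and then to invoke the Perron--Frobenius-type convergence $\RR_{\overline\phi}^n(v) \to u_\phi \int v\,\mathrm{d}m_\phi$ uniformly on $S^2$ supplied by the thermodynamical formalism for $\RR_\phi$ developed in \cite{Li14}, with normalization $\int u_\phi\,\mathrm{d}m_\phi = 1$.

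For $\xi_n$, iterating the definition (\ref{eqDefRuelleOp}) of the Ruelle operator gives the identity
$$\int u \, \mathrm{d}\xi_n = \frac{\RR_{\overline\phi}^n(u)(x_n)}{\RR_{\overline\phi}^n(\mathbbm{1}_{S^2})(x_n)},$$
so the cited convergence, combined with the fact that $u_\phi$ is bounded away from $0$ on $S^2$, yields $\int u \, \mathrm{d}\xi_n \to \int u \, \mathrm{d}m_\phi$ uniformly in the choice of sequence $\{x_n\}$. For $\widetilde\xi_n$, a telescoping computation using (\ref{eqDefPhiTilde}) together with $\RR_{\overline\phi}(u_\phi)=u_\phi$ shows first that $Z_n(\widetilde\phi) = 1$ for every $n$; an easy induction then gives the conjugacy identity $\RR_{\widetilde\phi}^n(u) = u_\phi^{-1} \, \RR_{\overline\phi}^n(u \cdot u_\phi)$, so that
$$\int u \, \mathrm{d}\widetilde\xi_n = \RR_{\widetilde\phi}^n(u)(x_n) \longrightarrow \int u \cdot u_\phi \, \mathrm{d}m_\phi = \int u \, \mathrm{d}\mu_\phi,$$
where the last equality records the relation $\mu_\phi = u_\phi \, m_\phi$ between the equilibrium state and the conformal measure established in \cite{Li14}.

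The intermediate case $\widehat\xi_n \to \mu_\phi$ requires a Cesaro-type argument. Partitioning the fibre $f^{-n}(x_n)$ along $z := f^i(y) \in f^{-(n-i)}(x_n)$ and using the multiplicativity $\deg_{f^n}(y) = \deg_{f^i}(y) \deg_{f^{n-i}}(z)$ from (\ref{eqLocalDegreeProduct}) together with the cocycle identity $S_n\phi(y) = S_i\phi(y) + S_{n-i}\phi(z)$ gives
$$\int u \, \mathrm{d}\widehat\xi_n = \frac{1}{n}\sum_{i=0}^{n-1} \frac{\RR_{\overline\phi}^{n-i}\!\bigl(u \cdot \RR_{\overline\phi}^i(\mathbbm{1}_{S^2})\bigr)(x_n)}{\RR_{\overline\phi}^n(\mathbbm{1}_{S^2})(x_n)}.$$
For fixed $\epsilon > 0$ one chooses $N$ large enough that $\RR_{\overline\phi}^k(\mathbbm{1}_{S^2})$ lies within $\epsilon$ of $u_\phi$ and $\RR_{\overline\phi}^k(u \cdot u_\phi)$ lies within $\epsilon$ of $u_\phi \int u\,\mathrm{d}\mu_\phi$ (uniformly on $S^2$) whenever $k \ge N$. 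The $n-2N$ summands with $N \le i \le n-N$ are then each within $O(\epsilon)$ of $\int u\,\mathrm{d}\mu_\phi$, while the remaining at most $2N$ boundary summands are uniformly bounded by $C\|u\|_\infty$ via the distortion estimate (\ref{eqSigmaExpSnPhiBound}) of Lemma~\ref{lmSnPhiBound}. Hence $\bigl|\int u\,\mathrm{d}\widehat\xi_n - \int u\,\mathrm{d}\mu_\phi\bigr| \le O(\epsilon) + 2NC\|u\|_\infty/n$, which tends to $O(\epsilon)$ as $n\to\infty$; letting $\epsilon \downarrow 0$ completes the argument.

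The main obstacle is the Cesaro bookkeeping for $\widehat\xi_n$; the cases $\xi_n$ and $\widetilde\xi_n$ follow essentially directly from the uniform Ruelle operator convergence once the conjugacy identity is observed. Independence of the three limits from the arbitrary choice of $\{x_n\}$ is automatic from the uniformity of this convergence combined with the strict positivity of $u_\phi$.
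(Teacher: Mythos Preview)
The paper does not prove this proposition at all: it is simply recorded from \cite[Proposition~9.1]{Li14} and used as a black box. Your proposal is therefore not competing with a proof in the present paper, but rather sketching what the argument in \cite{Li14} essentially amounts to.

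Your sketch is correct. The identifications
\[
\int u\,\mathrm{d}\xi_n=\frac{\RR_{\overline\phi}^n(u)(x_n)}{\RR_{\overline\phi}^n(\mathbbm{1}_{S^2})(x_n)},\qquad
\int u\,\mathrm{d}\widetilde\xi_n=\RR_{\widetilde\phi}^n(u)(x_n)=\frac{\RR_{\overline\phi}^n(u\,u_\phi)(x_n)}{u_\phi(x_n)},
\]
together with the uniform Ruelle--Perron--Frobenius convergence $\RR_{\overline\phi}^n(v)\to u_\phi\!\int v\,\mathrm{d}m_\phi$ and the relation $\mu_\phi=u_\phi\,m_\phi$, give (\ref{eqWeakConvPreImgToMWithWeight}) and (\ref{eqWeakConvPreImgToMuTildeWithWeight}) immediately. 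One small caveat: the present paper only records that $u_\phi$ is the \emph{Ces\`aro} limit of $\RR_{\overline\phi}^j(\mathbbm{1}_{S^2})$; the stronger direct convergence you invoke is what \cite{Li14} actually proves, so your appeal to \cite{Li14} is legitimate but you should cite the relevant theorem there explicitly rather than the remark in the present paper.

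For $\widehat\xi_n$ your fibre decomposition and the resulting identity
\[
\int u\,\mathrm{d}\widehat\xi_n=\frac{1}{n}\sum_{i=0}^{n-1}\frac{\RR_{\overline\phi}^{n-i}\bigl(u\cdot\RR_{\overline\phi}^i(\mathbbm{1}_{S^2})\bigr)(x_n)}{\RR_{\overline\phi}^n(\mathbbm{1}_{S^2})(x_n)}
\]
are correct, and the Ces\`aro bookkeeping works. A minor simplification: you do not need Lemma~\ref{lmSnPhiBound} for the $2N$ boundary terms. By positivity of $\RR_{\overline\phi}$ and $\RR_{\overline\phi}^{n-i}\circ\RR_{\overline\phi}^{i}=\RR_{\overline\phi}^{n}$, each summand is bounded in absolute value by $\|u\|_\infty$, so the boundary contribution is at most $2N\|u\|_\infty/n$.
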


We now prove a slight generalization of Proposition~5.17 in \cite{Li14}.

\begin{prop}      \label{propPressureLimitPreImgs}
Let $f$, $d$, $\phi$, $\alpha$ satisfy the Assumptions. Then for each sequence $\{x_n\}_{n\in\N}$ in $S^2$, we have
\begin{equation}   \label{eqPressureLimitPreImgsWDeg}
P(f,\phi)   = \lim\limits_{n\to +\infty} \frac{1}{n} \log \sum\limits_{y\in f^{-n}(x_n)} \deg_{f^n}(y) \exp (S_n\phi(y)).
\end{equation}
If we also assume that $f$ has no periodic critical points, then for an arbitrary sequence of functions $\{w_n\: S^2\rightarrow \R\}_{n\in\N}$ satisfying $w_n(x)\in [1,\deg_{f^n}(x)]$ for each $n\in\N$ and each $x\in S^2$, we have
\begin{equation}  \label{eqPressureLimitPreImgsWODeg}
P(f,\phi)  = \lim\limits_{n\to +\infty} \frac{1}{n} \log \sum\limits_{y\in f^{-n}(x_n)} w_n(y) \exp (S_n\phi(y)). 
\end{equation}
\end{prop}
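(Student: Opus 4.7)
The plan is to derive (\ref{eqPressureLimitPreImgsWDeg}) from the known behavior of the Ruelle operator $\RR_\phi$, and then obtain the weighted version (\ref{eqPressureLimitPreImgsWODeg}) from it by a sandwich argument, using a uniform upper bound on $\deg_{f^n}$ that is available precisely when $f$ has no periodic critical points.

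For (\ref{eqPressureLimitPreImgsWDeg}), observe that iterating the definition (\ref{eqDefRuelleOp}) of $\RR_\phi$ gives
$$\RR_\phi^n(\mathbbm{1}_{S^2})(x_n) = \sum_{y\in f^{-n}(x_n)} \deg_{f^n}(y)\exp(S_n\phi(y)),$$
so the claim is equivalent to $\tfrac{1}{n}\log \RR_\phi^n(\mathbbm{1}_{S^2})(x_n) \to P(f,\phi)$. Proposition~5.17 of \cite{Li14} establishes this limit for a fixed basepoint. To promote it to an arbitrary sequence $\{x_n\}$, I would invoke the bounded distortion estimate (\ref{eqSigmaExpSnPhiBound}) in Lemma~\ref{lmSnPhiBound}, which asserts that the ratio of $\RR_\phi^n(\mathbbm{1}_{S^2})$ at any two basepoints is bounded by $C_2$, uniformly in $n$. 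Taking $\tfrac{1}{n}\log$ and sending $n\to\infty$ absorbs this constant, so the limit along $\{x_n\}$ exists and equals $P(f,\phi)$.

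For (\ref{eqPressureLimitPreImgsWODeg}), the essential new input is the uniform bound $D := \sup_{n\in\N,\, y\in S^2} \deg_{f^n}(y) < \infty$, which holds whenever $f$ has no periodic critical points. This is the same fact underlying the finiteness of the constant $W_f$ used in the proof of Theorem~\ref{thmAsympHExpWOPC}, and ultimately rests on \cite[Lemma~17.1]{BM10}: if $\deg_{f^n}(y)\geq 2$, then $f^j(y)\in\crit f$ for some $0\leq j\leq n-1$, whence $f^{j+1}(y)\in\post f$ and hence $y\in \V^n(f,\CC)$ by forward-invariance of $\post f$; on $\V^n(f,\CC)$ the local degree is controlled by $W_f/2$, while off $\V^n(f,\CC)$ it is trivially $1$. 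Given this bound, the hypothesis $1\leq w_n(y)\leq \deg_{f^n}(y)$ yields the two-sided estimate
$$\frac{1}{D}\sum_{y\in f^{-n}(x_n)} \deg_{f^n}(y) e^{S_n\phi(y)} \leq \sum_{y\in f^{-n}(x_n)} w_n(y) e^{S_n\phi(y)} \leq \sum_{y\in f^{-n}(x_n)} \deg_{f^n}(y) e^{S_n\phi(y)}.$$
Applying $\tfrac{1}{n}\log$ and letting $n\to\infty$, the multiplicative constant $D$ vanishes and the outer terms both tend to $P(f,\phi)$ by (\ref{eqPressureLimitPreImgsWDeg}), giving the claim. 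No serious obstacle arises; the conceptual content is that the no-periodic-critical-points hypothesis is precisely what makes the weights $w_n$ and $\deg_{f^n}$ interchangeable up to a bounded factor that disappears in the logarithmic limit.
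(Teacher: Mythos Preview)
Your proposal is correct and follows essentially the same route as the paper: for (\ref{eqPressureLimitPreImgsWDeg}) you combine Proposition~5.17 of \cite{Li14} (the fixed-basepoint case) with the uniform distortion bound (\ref{eqSigmaExpSnPhiBound}), and for (\ref{eqPressureLimitPreImgsWODeg}) you sandwich using the uniform bound on $\deg_{f^n}$ from \cite[Lemma~17.1]{BM10}. The only difference is that you spell out why the degree bound holds via the inclusion $\{y:\deg_{f^n}(y)\geq 2\}\subseteq \V^n(f,\CC)$, whereas the paper simply cites the lemma.
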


\begin{proof}
We fix a Jordan curve $\CC\subseteq S^2$ that satisfies the Assumptions (see \cite[Theorem~1.2]{BM10} or Lemma~\ref{lmCexistsL} for the existence of such $\CC$). By Proposition~5.17 in \cite{Li14}, for each $x\in S^2$ we have
$$
P(f,\phi)  = \lim\limits_{n\to +\infty} \frac{1}{n} \log \sum\limits_{y\in f^{-n}(x)} \deg_{f^n}(y) \exp (S_n\phi(y)).
$$
Combining this equation with (\ref{eqSigmaExpSnPhiBound}) in Lemma~\ref{lmSnPhiBound}, we get (\ref{eqPressureLimitPreImgsWDeg}).

Assume now that $f$ has no periodic critical points. Then there exists a finite number $M\in\N$ that depends only on $f$ such that $\deg_{f^n}(x) \leq M$ for $n\in\N_0$ and $x\in S^2$ \cite[Lemma~17.1]{BM10}. Thus for each $n\in\N$,
$$
1 \leq \frac{\sum\limits_{y\in f^{-n}(x_n)} \deg_{f^n}(y) \exp (S_n\phi(y))}{\sum\limits_{y\in f^{-n}(x_n)} w_n(y) \exp (S_n\phi(y))} \leq M.
$$
Hence (\ref{eqPressureLimitPreImgsWODeg}) follows from (\ref{eqPressureLimitPreImgsWDeg}).
\end{proof}

While Proposition~\ref{propPressureLimitPreImgs} is a statement for itereated preimages, the next proposition is for periodic points. Let $P_{1,f^n}=\{x\in S^2 \,|\, f^n(x)=x\}$ for $n\in\N$.

\begin{prop}      \label{propPressureLimitPeriodicPts}
Let $f$, $d$, $\phi$, $\alpha$ satisfy the Assumptions. Fix an arbitrary sequence of functions $\{w_n\: S^2\rightarrow \R\}_{n\in\N}$ satisfying $w_n(x)\in [1,\deg_{f^n}(x)]$ for each $n\in\N$ and each $x\in S^2$. Then
\begin{equation}  \label{eqPressureLimitPeriodicPts}
P(f,\phi) = \lim\limits_{n\to +\infty} \frac{1}{n} \log \sum\limits_{x\in P_{1,f^n}} w_n(x) \exp (S_n\phi(x)).
\end{equation}
\end{prop}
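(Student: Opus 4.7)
The plan is to prove the upper and lower bounds separately, with Proposition~\ref{propPressureLimitPreImgs} (especially (\ref{eqPressureLimitPreImgsWDeg})) serving as the bridge between iterated-preimage sums and periodic-point sums. Fix an $f^{n_\CC}$-invariant Jordan curve $\CC$ from the Assumptions together with a reference point $p_0 \in \CC$, which sits on the boundary of every $0$-tile.

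For the upper bound, since $w_n(x) \leq \deg_{f^n}(x)$ it suffices to control $\sum_{x \in P_{1,f^n}} \deg_{f^n}(x) e^{S_n\phi(x)}$. For $n \geq N_0$, Lemma~\ref{lmAtMostOneCritPerTile} ensures every $n$-tile contains at most one fixed point of $f^n$. Combined with the fact (Remark~\ref{rmFlower}) that each $x \in P_{1,f^n}$ lies in exactly $2\deg_{f^n}(x)$ many $n$-tiles, double-counting yields
\begin{equation*}
\sum_{x \in P_{1,f^n}} \deg_{f^n}(x) e^{S_n\phi(x)} = \frac{1}{2}\sum_{\substack{X^n \in \X^n \\ X^n \cap P_{1,f^n} \neq \emptyset}} e^{S_n\phi(x_{X^n})},
\end{equation*}
where $x_{X^n}$ denotes the unique periodic point in $X^n$. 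For each such $X^n$, Proposition~\ref{propCellDecomp}(i) provides a homeomorphism $f^n|_{X^n}\colon X^n \to f^n(X^n)$, and since $p_0 \in \CC \subseteq \partial f^n(X^n)$ there is a unique $y_{X^n} \in X^n$ with $f^n(y_{X^n}) = p_0$; Lemma~\ref{lmSnPhiBound} gives $|S_n\phi(x_{X^n}) - S_n\phi(y_{X^n})| \leq C_1 \diam_d(S^2)^\alpha$, whence $e^{S_n\phi(x_{X^n})} \leq C' e^{S_n\phi(y_{X^n})}$. Because $p_0 \in \CC$ sits on the boundary of both $0$-tiles, each $y \in f^{-n}(p_0)$ serves as $y_{X^n}$ for all $2\deg_{f^n}(y)$ many $n$-tiles containing it, so the tile sum collapses to $2\sum_{y \in f^{-n}(p_0)} \deg_{f^n}(y) e^{S_n\phi(y)}$, and (\ref{eqPressureLimitPreImgsWDeg}) yields $\limsup \frac{1}{n}\log \leq P(f,\phi)$.

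For the lower bound, since $w_n \geq 1$ it suffices to prove $\liminf \frac{1}{n}\log \sum_{x \in P_{1,f^n}} e^{S_n\phi(x)} \geq P(f,\phi)$. Call an $n$-tile $X^n$ \emph{good} if $X^n \subseteq f^n(X^n)$; Brouwer's fixed point theorem applied to the continuous self-map $(f^n|_{X^n})^{-1}|_{X^n}\colon X^n \to X^n$ produces a periodic point $x_{X^n} \in X^n$. The same distortion bound gives $e^{S_n\phi(x_{X^n})} \geq (C')^{-1} e^{S_n\phi(y_{X^n})}$. After accounting for the multiplicity with which a given periodic point may appear in several good tiles (bounded above, via Remark~\ref{rmFlower}, by $2\deg_{f^n}$, and in particular by $2$ whenever the point is non-critical), one extracts a lower bound for $\sum_{x \in P_{1,f^n}} e^{S_n\phi(x)}$ in terms of $\sum_{X^n \text{ good with non-critical fixed point}} e^{S_n\phi(y_{X^n})}$. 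The task is then to show this restricted sum still grows at the rate $e^{nP(f,\phi)+o(n)}$, which I would deduce by invoking the equidistribution (\ref{eqWeakConvPreImgToMWithWeight}) of the normalized preimage masses $\xi_n$ toward $m_\phi$ -- an input with full topological support -- applied to an open set avoiding small neighborhoods of the finitely many periodic critical points.

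The upper bound is routine. The principal obstacle lies in the lower bound: verifying that ``good'' $n$-tiles with a non-critical Brouwer fixed point collectively capture a subexponential fraction of the total preimage mass $\sum_{X^n} e^{S_n\phi(y_{X^n})} = 2 e^{nP(f,\phi)+o(n)}$, while simultaneously controlling the tile-multiplicity of each periodic point. The equidistribution (\ref{eqWeakConvPreImgToMWithWeight}) -- singled out in the setup as the sole application of Proposition~\ref{propWeakConvPreImgWithWeight} here -- is the right input: full support of $m_\phi$ prevents the preimage mass from concentrating on the finitely many periodic critical points (where $\deg_{f^n}$ can grow exponentially), so that the Markovian combinatorics of the cell decompositions force enough good, non-critical tiles to absorb the exponential growth rate $P(f,\phi)$.
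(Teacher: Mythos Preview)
Your outline is close in spirit to the paper's argument but contains a concrete error in the upper bound and a real gap in the lower bound.

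\textbf{Upper bound.} The identity
\[
\sum_{x \in P_{1,f^n}} \deg_{f^n}(x)\, e^{S_n\phi(x)} \;=\; \tfrac{1}{2}\sum_{X^n \cap P_{1,f^n} \neq \emptyset} e^{S_n\phi(x_{X^n})}
\]
is false: Remark~\ref{rmFlower} gives the count $2\deg_{f^n}(x)$ only for $n$-\emph{vertices}, whereas a generic fixed point lies in the interior of a single $n$-tile. What does hold is the inequality $\deg_{f^n}(x)\leq \#\{X^n\ni x\}$ in all three cases (interior, edge, vertex), which yields $\sum_x \deg_{f^n}(x)e^{S_n\phi(x)} \leq \sum_{X^n\cap P_{1,f^n}\neq\emptyset} e^{S_n\phi(x_{X^n})}$ and then the upper bound as you intend. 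So this step is salvageable with minimal change.

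\textbf{Lower bound.} Your reduction to ``good'' tiles with non-critical Brouwer fixed point is not yet a proof, and the proposed use of equidistribution is aimed at the wrong target. You argue that full support of $m_\phi$ rules out concentration near the finitely many periodic critical points; but (i) the relevant input is $m_\phi(\CC)=0$ (from \cite[Proposition~7.1]{Li14}), not full support, and (ii) avoiding periodic critical points does nothing to guarantee that enough tiles are \emph{good}, i.e., satisfy $X^n\subseteq f^n(X^n)$. Away from $\CC$ every $n$-tile sits inside a single $0$-tile, and only the one whose colour matches is good---so roughly half are non-good regardless of critical points. You would have to run the equidistribution argument against a neighbourhood of $\CC$ (not of $\mathcal{PC}$) and then argue that the remaining colour-matched half still carries a positive fraction of the mass.

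The paper does exactly this, but more efficiently: it pairs each white $n$-tile with the black $n$-tile sharing an $n$-edge in $f^{-n}(e_0)$, picks $z\in\inte(e_0)$ so that every pair contains one preimage of $z$, and uses $m_\phi(\CC)=0$ together with (\ref{eqWeakConvPreImgToMWithWeight}) to show most pairs lie in $S^2\setminus N^\delta_d(\CC)$. For such a pair $A$ one has $A\subseteq\inte X^0_w$ or $A\subseteq\inte X^0_b$, so Brouwer applied to the matching-colour half gives a fixed point in $\inter A$, automatically of local degree $1$. Thus the pairing handles ``good'' and ``non-critical'' simultaneously, and the comparison with preimage sums becomes a direct two-sided estimate with a fixed constant $C$ rather than an $o(n)$ statement. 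Your approach can be completed along the same lines, but it needs the $\CC$-avoidance step you omitted, and the pairing trick is what makes it clean.
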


\begin{proof}
We fix a Jordan curve $\CC\subseteq S^2$ that satisfies the Assumptions (see \cite[Theorem~1.2]{BM10} or Lemma~\ref{lmCexistsL} for the existence of such $\CC$). 

We first prove (\ref{eqPressureLimitPeriodicPts}). By Proposition~\ref{propPressureLimitPreImgs}, it suffices to prove that there exist $C>1$ and $z\in S^2$ such that for each $n\in\N$ sufficiently large,
\begin{equation}   \label{eqSumsComparable}
\frac{1}{C}  \leq \frac{\sum\limits_{x\in P_{1,f^n}} w_n(x) \exp (S_n\phi(x))}{\sum\limits_{x\in f^{-n}(z)}\deg_{f^n}(x) \exp (S_n\phi(x))}  \leq C.
\end{equation}

We fix a $0$-edge $e_0 \subseteq \CC$ and a point $z\in \inte (e_0)$.

By Proposition~7.1 in \cite{Li14}, $m_\phi(\CC) = 0$. By the continuity of $m_\phi$, we can find $\delta >0$ such that
\begin{equation}
m_\phi \big( \overline{N^\delta_d} (\CC)\big) < \frac{1}{100}.
\end{equation}

Note that $\deg_{f^n}(y)=1$ if $f^n(y)=z$ for $n\in\N$. We define, for each $n\in\N_0$, the probability measure
\begin{align}  \label{eqDistrPreImgs}
\nu_n 
& = \sum\limits_{x\in f^{-n}(z)} \frac{\deg_{f^n} (x)\exp\(S_n\phi(x)\)}{\sum_{y\in f^{-n}(z)} \deg_{f^n} (y) \exp\(S_n\phi(y)\)}  \delta_x   \notag \\
& = \sum\limits_{x\in f^{-n}(z)} \frac{\exp\(S_n\phi(x)\)}{\sum_{y\in f^{-n}(z)} \exp\(S_n\phi(y)\)} \delta_x ,
\end{align}

Let $N_0\in \N$ be the constant from Lemma~\ref{lmAtMostOneCritPerTile}. By (\ref{eqWeakConvPreImgToMWithWeight}) in Proposition~\ref{propWeakConvPreImgWithWeight}, $\nu_n\stackrel{w^*}{\longrightarrow} m_\phi$ as $n\longrightarrow +\infty$. So by Lemma~\ref{lmPortmanteau}, we can choose $N_1>N_0$ such that for each $n\in\N$ with $n>N_1$, we have
\begin{equation}    \label{eqNbhdCFewPreImgs}
\nu_n \big( \overline{N^\delta_d} (\CC)\big) < \frac{1}{10}.
\end{equation}
By Lemma~\ref{lmCellBoundsBM}, it is clear that we can choose $N_2>N_1$ such that for each $n\in\N$ with $n>N_2$, and each $n$-tile $X^n\in\X^n$,
\begin{equation}   \label{eqDiamNTileSmall}
\diam_d(X^n) < \frac{\delta}{10}.
\end{equation}

We observe that for each $i\in\N$, we can pair a white $i$-tile $X_w^i\in\X_w^i$ and a black $i$-tile $X_b^i\in\X_b^i$ whose intersection $X_w^i\cap X_b^i$ is an $i$-edge contained in $f^{-i}(e_0)$.  There are a total of $d^i$ such pairs and each $i$-tile is in exactly one such pair. We denote by $\PP_i$ the collection of the unions $X_w^i\cup X_b^i$ of such pairs, i.e.,
$$
\PP_i=\{X_w^i\cup X_b^i  \,|\, X_w^i\in\X_w^i,X_b^i\in\X_b^i, X_w^i\cap X_b^i\cap f^{-i}(e_0) \in \E^i \}.
$$
We denote $\PP^\delta_i=\{A\in \PP_i \,|\, A \setminus N^\delta_d(\CC) \neq \emptyset\}$.

We now fix an integer $n> N_2$.

Then $\PP^\delta_n$ forms a cover of $S^2\setminus \overline{N^\delta_d}(\CC)$. For each $A\in\PP^\delta_n$, by (\ref{eqDiamNTileSmall}) we have $A\cap \CC =\emptyset$. So $A\subseteq \inte X^0_w$ or $A\subseteq  \inte X^0_b$, where $X^0_w$ and $X^0_b$ are the white $0$-tile and the black $0$-tile in $\X^0$, respectively. So by Brouwer's Fixed Point Theorem (see for example, \cite[Theorem~1.9]{Ha02}) and Lemma~\ref{lmAtMostOneCritPerTile}, we can define a function $p\:\PP^\delta_n\rightarrow P_{1,f^n}$ in such a way that $p(A)$ is the unique fixed point of $f^n$ contained in $A$. (For example, if $A\in\PP^\delta_n$ is the union of a black $n$-tile $X^n_b$ and a white $n$-tile $X^n_w$ and is a subset of the interior of the black $0$-tile, then there is no fixed point of $f^n$ in $X^n_w$, and by applying Brouwer's Fixed Point Theorem to the inverse of $f^n$ restricted to $X^n_b$, we get a fixed point $x\in X^n_b$ of $f^n$, which is the unique fixed point of $f^n$ in $X^n_b$ by Lemma~\ref{lmAtMostOneCritPerTile}.) Moreover, for each $A\in\PP^\delta_n$, $p(A)\in \inter A$, so $\deg_{f^n}(p(A)) =1 = w_n(p(A))$. In general, by Lemma~\ref{lmAtMostOneCritPerTile}, each $A\in\PP_n$ contains at most $2$ fixed points of $f^n$.

We also define a function $q\: \PP_n \rightarrow f^{-n}(z)$ in such a way that $q(A)$ is the unique preimage of $z$ under $f^n$ that is contained in $A$, for each $A\in\PP_n$ (see Proposition~\ref{propCellDecomp}). We note that if $X^n_w\in\X^n_w$ and $X^n_b\in\X^n_b$ are the $n$-tiles that satisfy $X^n_w\cup X^n_b =A\in\PP_n$ and $e_n=X^n_w\cap X^n_b$, then $q(A)\in e_n$. Thus in particular, $\deg_{f^n}(q(A))=1$ for each $A\in\PP_n$.

Hence by construction, we have
\begin{equation}    \label{eqSplitSumPreImgs}
\sum\limits_{x\in f^{-n}(z)} e^{S_n\phi(x)} = \sum\limits_{A\in \PP^\delta_n}  e^{S_n\phi(q(A))} + \sum\limits_{A\in \PP_n\setminus\PP^\delta_n}  e^{S_n\phi(q(A))},
\end{equation}
and
\begin{align}    \label{eqSplitSumPerPts}
     & \sum\limits_{A\in \PP^\delta_n} e^{S_n\phi(p(A))} \leq \sum\limits_{x\in P_{1,f^n}} w_n(x)   e^{S_n\phi(x)}\\
\leq & \sum\limits_{A\in \PP^\delta_n} e^{S_n\phi(p(A))} + \sum\limits_{A\in \PP_n\setminus\PP^\delta_n} \sum\limits_{x\in A\cap P_{1,f^n}}  e^{S_n\phi(x)}.   \notag
\end{align}
The last inequality in (\ref{eqSplitSumPerPts}) is due to the fact that if $x\in P_{1,f^n}$ satisfies $\deg_{f^n}(x) \geq 2$, then $x\in\V^n$ with $x\notin \bigcup \PP^\delta_n$, and the number of $A\in \PP_n$ that contains $x$ is at least $\deg_{f^n}(x)$ (and at most $2\deg_{f^n}(x)$).

By (\ref{eqSnPhiBound}) in Lemma~\ref{lmSnPhiBound}, we get
\begin{equation}    \label{eqQuotSumAwayC}
\frac{1}{C_3} \leq  \frac{\sum\limits_{A\in \PP^\delta_n}  e^{S_n\phi(p(A))}}{\sum\limits_{A\in \PP^\delta_n}   e^{S_n\phi(q(A))}} \leq C_3,
\end{equation}
and since in addition, $\card(A\cap P_{1,f^n}) \leq 2$ for $A\in\PP_n$ by Lemma~\ref{lmAtMostOneCritPerTile}, we have
\begin{equation}    \label{eqQuotSumNearC}
\frac{\sum\limits_{A\in \PP_n \setminus \PP^\delta_n} \sum\limits_{x\in A\cap P_{1,f^n}}  e^{S_n\phi(x)}}{\sum\limits_{A\in \PP_n \setminus \PP^\delta_n}    e^{S_n\phi(q(A))}} \leq 2 C_3,
\end{equation}
where 
$$
C_3 = \exp\(C_1 \(\diam_d(S^2)\)^\alpha\),
$$
and $C_1>0$ is a constant from Lemma~\ref{lmSnPhiBound}. Both $C_1$ and $C_3$ depend only on $f$, $\CC$, $d$, $\phi$, and $\alpha$.

By (\ref{eqSplitSumPreImgs}), (\ref{eqDistrPreImgs}), and (\ref{eqNbhdCFewPreImgs}), we get
\begin{equation}   \label{eqSumPreImgsComparable}
\sum\limits_{x\in f^{-n}(z)}  e^{S_n\phi(x)}  
\geq \sum\limits_{A\in \PP^\delta_n}  e^{S_n\phi(q(A))}
\geq \frac{9}{10} \sum\limits_{x\in f^{-n}(z)}   e^{S_n\phi(x)}.  
\end{equation}

Hence, by (\ref{eqSplitSumPerPts}), (\ref{eqQuotSumAwayC}), and (\ref{eqSumPreImgsComparable}), we have
\begin{equation*}
\frac{\sum\limits_{x\in P_{1,f^n}} w_n(x) e^{S_n\phi(x)}}{\sum\limits_{x\in f^{-n}(z)}\deg_{f^n}(x) e^{S_n\phi(x)}}
\geq \frac{\sum\limits_{A\in \PP^\delta_n} e^{S_n\phi(p(A))}}{\frac{10}{9} \sum\limits_{A\in \PP^\delta_n} e^{S_n\phi(q(A))}  }
\geq \frac{9}{10C_3}.
\end{equation*}
On the other hand, by (\ref{eqSplitSumPreImgs}), (\ref{eqSplitSumPerPts}), (\ref{eqQuotSumAwayC}), (\ref{eqQuotSumNearC}), and (\ref{eqSumPreImgsComparable}), we get
\begin{align*}
      & \frac{\sum\limits_{x\in P_{1,f^n}} w_n(x) e^{S_n\phi(x)}}{\sum\limits_{x\in f^{-n}(z)}\deg_{f^n}(x) e^{S_n\phi(x)}}   \leq \frac{ \sum\limits_{A\in \PP^\delta_n} e^{S_n\phi(p(A))} + \sum\limits_{A\in \PP_n\setminus\PP^\delta_n} \sum\limits_{x\in A\cap P_{1,f^n}}  e^{S_n\phi(x)} }{ \sum\limits_{x\in f^{-n}(z)}  e^{S_n\phi(x)} }  \\
 \leq & \frac{\sum\limits_{A\in \PP^\delta_n}  e^{S_n\phi(p(A))}}{\sum\limits_{A\in \PP^\delta_n}   e^{S_n\phi(q(A))}} +  \frac{ \sum\limits_{A\in \PP_n\setminus\PP^\delta_n} \sum\limits_{x\in A\cap P_{1,f^n}}  e^{S_n\phi(x)} }{ 10  \sum\limits_{A\in \PP_n\setminus\PP^\delta_n} e^{S_n\phi(q(A))} }     \leq C_3 + \frac{2}{10}C_3.
\end{align*}

Thus (\ref{eqSumsComparable}) holds if we choose $C=2 C_3$ and $n>N_2$. The proof is now complete.
\end{proof}

\subsection{Proof of large deviation principles}    \label{subsctProofLDP}

\begin{proof}[Proof of Theorem~\ref{thmLDP}]
Let $\phi \in \Holder{\alpha}(S^2,d)$ for some $\alpha\in(0,1]$.

We apply Theorem~\ref{thmAbsLargeDeviationPrincipleCRL} with $X=S^2$, $g=f$, and $H=\Holder{\alpha}(S^2,d)$. Note that $\Holder{\alpha}(S^2,d)$ is dense in $\CCC(S^2)$ with respect to the uniform norm (see for example, \cite[Lemma~6.12]{Li14}). Theorem~\ref{thmExistenceUniquenessES} implies Condition~(ii) in the hypothesis of Theorem~\ref{thmAbsLargeDeviationPrincipleCRL}. Condition~(i) follows from Corollary~\ref{corUSC}, (\ref{eqVPEntropy}), and the fact that $h_{\operatorname{top}}(f) = \log(\deg f)$ \cite[Corollary~20.8]{BM10}.  

It now suffices to verify (\ref{eqThmCCondition}) for each of the sequences $\{\Omega_n(x_n)\}_{n\in\N}$ and $\{\Omega_n\}_{n\in\N}$ of Borel probability measures on $\PPP(S^2)$.

Fix an arbitrary $\psi \in \Holder{\alpha}(S^2,d)$.

By (\ref{eqPressureLimitPreImgsWODeg}) in Proposition~\ref{propPressureLimitPreImgs},
\begin{align*}
  & \lim\limits_{n\to+\infty} \frac{1}{n} \log \int_{\PPP(S^2)} \! \exp \(n\int\!\psi \,\mathrm{d}\mu \) \,\mathrm{d}\Omega_n(x_n)(\mu) \\
= & \lim\limits_{n\to+\infty} \frac{1}{n} \log \sum\limits_{y\in f^{-n}(x_n)}  \frac{ w_n(y) \exp(S_n\phi(y))  }{\sum_{z\in f^{-n}(x_n)} w_n(z) \exp(S_n\phi(z))}  e^{\sum_{i=0}^{n-1} \psi\(f^i(y)\) } \\
= & \lim\limits_{n\to+\infty} \frac{1}{n} \Bigg( \log \sum\limits_{y\in f^{-n}(x_n)}   w_n(y) e^{S_n(\phi+\psi)(y)} - \log \sum\limits_{z\in f^{-n}(x_n)}   w_n(z) e^{S_n(\phi)(z)} \Bigg) \\
= & P(f,\phi+\psi) - P(f,\phi).
\end{align*}

Similarly, by (\ref{eqPressureLimitPeriodicPts}) in Proposition~\ref{propPressureLimitPeriodicPts}, we get
$$
   P(f,\phi+\psi) - P(f,\phi)
=  \lim\limits_{n\to+\infty} \frac{1}{n} \log \int_{\PPP(S^2)} \! \exp \(n\int\!\psi \,\mathrm{d}\mu \) \,\mathrm{d}\Omega_n(\mu)
$$
The theorem now follows from Theorem~\ref{thmAbsLargeDeviationPrincipleCRL}.
\end{proof}

\subsection{Equidistribution with respect to the equilibrium state}  \label{subsctEquidistr}

\begin{proof}[Proof of Corollary~\ref{corMeasTheoPressure}]
We prove the first equality in (\ref{eqMeasTheoPressure}) now.

Fix $\mu \in \MMM(S^2,f)$ and a convex local basis $\GGG_\mu$ at $\mu$. By (\ref{eqRateFnI}) and the upper semi-continuity of $h_\mu(f)$ (Corollary~\ref{corUSC}), we get 
$$
-I^\phi(\mu) = \inf\limits_{\GG\in\GGG_\mu}\bigg(\sup\limits_\GG (-I^\phi)\bigg) = \inf\limits_{\GG\in\GGG_\mu}\(-\inf\limits_\GG I^\phi\).
$$
Then by (\ref{eqRateFnI}) and (\ref{eqInfI}),
\begin{align*}
 & -P(f,\phi) + \int\!\phi\,\mathrm{d}\mu + h_\mu(f) = -I^\phi(\mu) = \inf\limits_{\GG\in\GGG_\mu}\(-\inf\limits_\GG I^\phi\)   \\
=& \inf\limits_{\GG\in\GGG_\mu} \Bigg\{ \lim\limits_{n\to+\infty} \frac{1}{n}\log \sum\limits_{y\in f^{-n}(x_n),\, W_n(y)\in\GG} \frac{w_n(y)\exp(S_n\phi(y))}{Z_n(\phi)} \Bigg\},
\end{align*}
where we write $Z_n(\phi)= \sum\limits_{z\in f^{-n}(x_n)} w_n(z) \exp (S_n\phi(z))$. By (\ref{eqPressureLimitPreImgsWODeg}) in Proposition~\ref{propPressureLimitPreImgs}, we have $P(f,\phi)=\lim\limits_{n\to+\infty} \frac{1}{n} \log Z_n(\phi)$. Thus the first equality in (\ref{eqMeasTheoPressure}) follows.

\smallskip

By similar arguments, with (\ref{eqPressureLimitPreImgsWDeg}) in Proposition~\ref{propPressureLimitPreImgs} replaced by  (\ref{eqPressureLimitPeriodicPts}) in Proposition~\ref{propPressureLimitPeriodicPts}, we get the second equality in (\ref{eqMeasTheoPressure}).
\end{proof}

\begin{proof}[Proof of Corollary~\ref{corEquidistr}]
Recall that $W_n(x)=\frac{1}{n}\sum\limits_{i=0}^{n-1} \delta_{f^i(x)} \in \PPP(S^2)$ for $x\in S^2$ and $n\in\N$ as defined in (\ref{eqDefWn}). We write 
$$
Z^+_n(\GG)=\sum\limits_{y\in f^{-n}(x_n), \, W_n(y)\in\GG}  \deg_{f^n}(y)\exp(S_n\phi(y))
$$
and 
$$
Z^-_n(\GG)=\sum\limits_{y\in f^{-n}(x_n), \, W_n(y)\notin\GG}  \deg_{f^n}(y)\exp(S_n\phi(y))
$$
for each $n\in\N$ and each open set $\GG\subseteq \PPP(S^2)$.

Let $\GGG_{\mu_\phi}$ be a convex local basis of $\PPP(S^2)$ at $\mu_\phi$. Fix an arbitrary convex open set $\GG\in\GGG_{\mu_\phi}$. 

By the uniqueness of the equilibrium state in our context and Corollary~\ref{corMeasTheoPressure}, we get that for each $\mu\in\PPP(S^2)$, there exist numbers $a_\mu < P(f,\phi)$ and $N_\mu \in\N$ and an open neighborhood $\UU_\mu \in \PPP(S^2)\setminus \{\mu_\phi\}$ containing $\mu$ such that for each $n>N_\mu$,
\begin{equation}  \label{eqZn<}
Z_n^+(\UU_\mu) \leq \exp(n a_\mu).
\end{equation}
Since $\PPP(S^2)$ is compact in the weak$^*$ topology by Alaoglu's theorem, so is $\PPP(S^2)\setminus \GG$. Thus there exists a finite set $\{\mu_i\,|\, i\in I\} \subseteq \PPP(S^2)\setminus \GG$ such that
\begin{equation}   \label{eqUcover}
\PPP(S^2)\setminus \GG \subseteq \bigcup\limits_{i\in I} \UU_{\mu_i}.
\end{equation}
Here $I$ is a finite index set. Let $a=\max\{ a_{\mu_i} \,|\, i\in I\}$. Note that $a<P(f,\phi)$. By Corollary~\ref{corMeasTheoPressure} with $\mu=\mu_\phi$, we get that
\begin{equation}  \label{eqPfcorEquidistr1}
P(f,\phi_0) \leq \lim\limits_{n\to+\infty} \frac{1}{n} \log Z^+_n(\GG).
\end{equation}
Combining (\ref{eqPfcorEquidistr1}) with (\ref{eqPressureLimitPreImgsWDeg}) in Proposition~\ref{propPressureLimitPreImgs}, we get that the equality holds in (\ref{eqPfcorEquidistr1}). So there exist numbers $b\in (a,P(f,\phi))$ and $N\geq \max\{ N_i \,|\, i\in I\}$ such that for each $n> N$,
\begin{equation} \label{eqZn>}
Z_n^+(\GG) \geq \exp(nb).
\end{equation}

We claim that every subsequential limit of $\{\nu_n\}_{n\in\N}$ in the weak$^*$ topology lies in the closure $\overline{\GG}$ of $\GG$. Assuming that the claim holds, then since $\GG\in\GGG_{\mu_\phi}$ is arbitrary, we get that any subsequential limit of $\{\nu_n\}_{n\in\N}$ in the weak$^*$ topology is $\mu_\phi$, i.e., $\nu_n \stackrel{w^*}{\longrightarrow} \mu_\phi$ as $n\longrightarrow +\infty$.

\smallskip

We now prove the claim. We first observe that for each $n\in\N$,
\begin{align*}
\nu_n & = \sum\limits_{y\in f^{-n}(x_n)} \frac{w_n(y) \exp(S_n\phi(y))}{ Z^+_n(\GG) + Z^-_n(\GG)} W_n(y) \\
      & = \frac{Z^+_n(\GG)}{Z^+_n(\GG) + Z^-_n(\GG)} \nu'_n + \sum\limits_{y\in f^{-n}(x_n),\,W_n(y)\notin\GG} \frac{w_n(y) e^{S_n\phi(y)}}{ Z^+_n(\GG) + Z^-_n(\GG)} W_n(y),
\end{align*}
where $\nu'_n = \sum\limits_{y\in f^{-n}(x_n),\,W_n(y)\in\GG} \frac{w_n(y) \exp(S_n\phi(y))}{ Z^+_n(\GG)} W_n(y)$.

Note that since $a<b$, by (\ref{eqUcover}), (\ref{eqZn<}), and (\ref{eqZn>}),
$$
0\leq \lim\limits_{n\to+\infty} \frac{Z_n^-(\GG)}{Z_n^+(\GG)} \leq \lim\limits_{n\to+\infty} \frac{\sum_{i\in I} Z_n^+(\UU_i)}{Z_n^+(\GG)} \leq \lim\limits_{n\to+\infty} \frac{\card(I)\exp(na)}{\exp(nb)} = 0. 
$$
So $\lim\limits_{n\to+\infty} \frac{Z^+_n(\GG)}{Z^+_n(\GG) + Z^-_n(\GG)}  = 1$, and that the total variation
\begin{align*}
     & \Bigg\|{\sum\limits_{y\in f^{-n}(x_n),\,W_n(y)\notin\GG} \frac{w_n(y) \exp(S_n\phi(y))}{ Z^+_n(\GG) + Z^-_n(\GG)} W_n(y)} \Bigg\|   \\
     \leq & \frac{\sum\limits_{y\in f^{-n}(x_n),\,W_n(y)\notin\GG} w_n(y) \exp(S_n\phi(y)) \Norm{W_n(y)}}{Z^+_n(\GG) + Z^-_n(\GG)} \\
     \leq & \frac{Z^-_n(\GG)}{Z^+_n(\GG) + Z^-_n(\GG)}  \longrightarrow 0
\end{align*}
as $n\longrightarrow +\infty$. Thus a measure is a subsequential limit of $\{\nu_n\}_{n\in\N}$ if and only if it is a subsequential limit of $\{\nu'_n\}_{n\in\N}$. Note that $v'_n$ is a convex combination of measures in $\GG$, and $\GG$ is convex, so $\nu'_n\in\GG$, for $n\in\N$. Hence each subsequential limit of $\{\nu_n\}_{n\in\N}$ lies in the closure $\overline{\GG}$ of $\GG$. The proof of the claim is complete now.

\smallskip

By similar arguments as in the proof of the convergence of $\{\nu_n\}_{n\in\N}$ above, with (\ref{eqPressureLimitPreImgsWODeg}) in Proposition~\ref{propPressureLimitPreImgs} replaced by (\ref{eqPressureLimitPeriodicPts}) in Proposition~\ref{propPressureLimitPeriodicPts}, we get that $\eta_n \stackrel{w^*}{\longrightarrow} \mu_\phi$ as $n\longrightarrow +\infty$.
\end{proof}

\end{document}